\documentclass[11pt]{article}

\usepackage[T1]{fontenc}
\usepackage{amssymb}
\usepackage{amsmath,amsfonts}
\usepackage{amsthm}
\usepackage{latexsym}
\usepackage{mathrsfs}
\usepackage{color}
\usepackage{comment}
\usepackage{algorithm}
\usepackage{algorithmic}
\usepackage{subfigure}
\usepackage{dsfont}
\usepackage[title]{appendix}
\usepackage{bbm}
\usepackage{multirow}
\usepackage{varioref} 
\usepackage[colorlinks=true,
            linkcolor=blue,
            anchorcolor=blue,
            citecolor=blue,
            urlcolor=blue,
            ]{hyperref}
\usepackage{cleveref} 
\usepackage{url}            
\usepackage{booktabs}       
\usepackage[a4paper]{geometry}
\geometry{hmargin=1in,vmargin=1.2in}

\usepackage{bm}
\usepackage{graphicx}
\usepackage[numbers]{natbib}
\usepackage{amsopn}

\newcommand{\email}[1]{\protect\href{mailto:#1}{#1}}



\newcommand*{\medcup}{\mathbin{\scalebox{1.5}{\ensuremath{\cup}}}}%

\newtheorem{lemma}{Lemma}
\newtheorem{thm}{Theorem}
\newtheorem{coro}{Corollary}
\newtheorem{defi}{Definition}

\newtheorem{assumption}{Assumption}

\newtheorem{remark}{Remark}
\newtheorem{exam}{Example}


\newcounter{spb}
\setcounter{spb}{1}



\DeclareMathOperator{\dist}{dist}

\DeclareMathOperator{\dom}{dom}

\newcommand{\be}{\begin{equation}}
\newcommand{\ee}{\end{equation}}

\newcommand{\RNum}[1]{\uppercase\expandafter{\romannumeral #1\relax}}
\newcommand{\rNum}[1]{\lowercase\expandafter{\romannumeral #1\relax}}

\newcommand{\argmin}{\arg\min}

\newcommand{\st}{\mbox{s.t.}}


\def\R{{\mathbb R}}
\def\P{{\mathbb{P}}}
\def\1{\mathbbm{1}}

\def\mA{\mathcal{A}}

\def\mE{\mathcal{E}}
\def\mI{\mathcal{I}}

\def\mM{\mathcal{M}}
\def\mN{\mathcal{N}}
\def\mS{\mathcal{S}}
\def\mT{\mathcal{T}}

\def\mW{\mathcal{W}}
\def\mX{\mathcal{X}}

\def\mZ{\mathcal{Z}}
\def\mP{\mathcal{P}}
\def\bA{\bm{A}}

\def\b0{\bm{0}}
\def\ba{\bm{a}}
\def\bb{\bm{b}}

\def\bd{\bm{d}}
\def\be{\bm{e}}

\def\bmu{\bm{\mu}}
\def\bnu{\bm{\nu}}
\def\bo{\bm{1}}
\def\bs{\bm{s}}
\def\bu{\bm{u}}

\def\bw{\bm{w}}
\def\bx{\bm{x}}
\def\by{\bm{y}}
\def\bxi{\bm{\xi}}
\def\bz{\bm{z}}



\usepackage{amsfonts}

\date{\today}
\author{
Peng Wang\thanks{Department of Electrical Engineering and Computer Science, University of Michigan, Ann Arbor, USA. (\email{peng8wang@gmail.com}).}
\and Rujun Jiang\thanks{Corresponding author. School of Data Science, Fudan University, Shanghai, China. (\email{rjjiang@fudan.edu.cn}).}
\and Qingyuan Kong\thanks{ School of Data Science, Fudan University, Shanghai, China. (\email{qykong21@m.fudan.edu.cn}).}
\and Laura Balzano\thanks{Department of Electrical Engineering and Computer Science, University of Michigan, Ann Arbor, USA. (\email{girasole@umich.edu}.)}
}
\title{A Proximal DC Algorithm for Sample Average Approximation of Chance Constrained Programming\footnote{This paper has been accepted for publication in INFORMS Jounral on Computing and is available at \url{https://doi.org/10.1287/ijoc.2024.0648} \citep{wang2025proximal}. The copyright of the final published version is held by INFORMS.}}
\usepackage{varioref}

\begin{document}

\maketitle
\begin{abstract}
Chance constrained programming (CCP) refers to a type of optimization problem with uncertain constraints that are satisfied with at least a prescribed probability level. 
    In this work, we study the sample average approximation (SAA) method for chance constraints, which is an important approach to CCP in the data-driven setting where only a sample of multiple realizations of the random vector in the constraints is available. The SAA method approximates the underlying distribution with an empirical distribution over the available sample. 
    Assuming that the functions in the chance constraints are all convex, we reformulate the SAA of chance constraints into a difference-of-convex (DC) form. Additionally, by assuming the objective function is also a DC function, we obtain a DC constrained DC program. 
    To solve this reformulation, we propose a proximal DC algorithm and show that the subproblems of the algorithm are suitable for off-the-shelf solvers in some scenarios. Moreover, we not only prove the subsequential and sequential convergence of the proposed algorithm but also derive the iteration complexity for finding an approximate Karush-Kuhn-Tucker point. To support and complement our theoretical development, we show via numerical experiments that our proposed approach is competitive with a host of existing approaches. Our publicly available code repository can be found at \url{https://github.com/peng8wang/2024.0648}. 
\end{abstract}

{\bf Key words}: Chance constrained programming; difference-of-convex optimization; Kurdyka-\L ojasiewicz inequality; global convergence; iteration complexity. 

\section{Introduction}

Chance constrained programming is a powerful modeling paradigm for optimization problems with uncertain constraints, which has found wide applications in diverse fields, such as finance \citep{bonami2009exact, ghaoui2003worst}, power systems \citep{bienstock2014chance, xie2017distributionally}, and supply chain \citep{chen2001inventory, gurvich2010staffing}, to name a few; see, e.g., \cite{kuccukyavuz2022chance} and the references therein for more applications. In general, a chance constrained program is to minimize a targeted loss subject to the probability of violating uncertain constraints being within a prespecified risk level. In this work, we consider a chance constrained program of the form
\begin{align}\label{P:CCP}
\min_{\bx \in \mX}\ \left\{f(\bx):\ \P \left(c_i(\bx,\bxi)\le 0, i \in \{1,\dots,m \}\right) \ge 1-\alpha \right\},
\end{align}
where the vector $\bx\in \R^n$ denotes the decision variables, the set $\mX$ is a deterministic set contained in the open set $U \subseteq \R^n$, $\bxi \in \R^d$ is a random vector with its probability distribution supported on a set $\Xi \subseteq \R^d$, $f:U \rightarrow \R$ and $c_i:U \times \Xi \rightarrow\R$ for all $i \in \{1,\dots,m\}$ are real-valued functions, and $\alpha\in(0,1)$ is a given risk parameter. This problem is known as a \emph{single chance constrained program} if $m=1$, and a \emph{joint chance constrained program} otherwise.

Problem \eqref{P:CCP} 
is generally difficult to optimize due to the following fundamental challenges. First, the feasible region formed by the chance constraint may be non-convex even if $c_i(\bx,\bxi)$ for each $i\in \{1,\dots,m\}$ is linear in $\bx$ and $\mX$ is a polyhedron \citep{luedtke2010integer}. 
Moreover, in the setting where a sample of $N$ i.i.d. realizations $\{\hat{\bxi}^i\}_{i=1}^N$  of the random vector $\bxi$ is available, while its distribution is unknown, it is generally impossible to compute the probability of satisfying the constraint for a given $\bx \in \mX$. To approximately solve Problem \eqref{P:CCP}, we consider its sample average approximation (SAA) over the sample $\{\hat{\bxi}^i\}_{i=1}^N$, which has been studied in \citep{ahmed2008solving, luedtke2008sample, pagnoncelli2009sample, pena2020solving}, as follows:
\begin{align}\label{P:CCP sample}
\min_{\bx \in \mX}\ \left\{ f(\bx):\ \frac{1}{N}\sum_{i=1}^N\1{\{C(\bx,\hat{\bxi}^i) \le 0\}} \ge 1 - \alpha  \right\},
\end{align}
where $C(\bx,\bxi) := \max\left\{ c_i(\bx,\bxi):i=1,\dots,m\right\}$ and $\1\{\cdot\}$ denotes the characteristic function, that is, $\1\{C(\bx,\hat{\bxi}^i) \le 0\} = 1$ if $C(\bx,\hat{\bxi}^i) \le 0$ and $0$ otherwise. In particular, it has been shown in \cite{luedtke2008sample,pagnoncelli2009sample} that solving Problem \eqref{P:CCP sample} can return a good approximate solution of Problem \eqref{P:CCP} when $N$ is sufficiently large. Moreover, Problem \eqref{P:CCP sample} is exactly equivalent to Problem \eqref{P:CCP} when the distribution of $\bm \xi$ is finite and discrete, with each event appearing with probability $1/N$. Although Problem \eqref{P:CCP sample} is deterministic and does not involve random variables, it remains challenging to optimize due to the discreteness of the constraint. 

Throughout this paper, we make the following assumptions on Problem \eqref{P:CCP sample}: 
\begin{assumption}\label{AS:1}
$\emph{(a)}$ The function $f$ takes the form of $f=g-h$, where $g$ and $h$ are continuous functions defined on an open set $U$ that contains $\mX$. Moreover, $h$ is convex, i.e.,  $h\left(\alpha \bm x + (1-\alpha)\bm y\right) \le \alpha h(\bm x) + (1-\alpha)h(\bm y)$ for all $\alpha \in [0,1]$, and $g$ is $\rho$-strongly convex for some $\rho \ge 0$, i.e., $g(\bx)-\rho\|\bx\|^2/2$ is convex. \\
$\emph{(b)}$ The set $\mX$ is non-empty, closed, and convex. \\
$\emph{(c)}$ The functions $c_i(\bx,\bxi)$ for all $i=1,\dots,m$ are convex and continuously differentiable  in $\bx$ on $\R^n$ for every $\bxi \in \Xi$.
\end{assumption}

In this paper, we study how to utilize these particular functional structures to develop an effective algorithmic framework for solving Problem \eqref{P:CCP sample}. 
Exploiting these structures, we reformulate Problem \eqref{P:CCP sample} into a DC constrained DC problem and propose a proximal DC algorithm for solving the reformulation. In the literature, existing approaches to solving Problem \eqref{P:CCP sample} generally can only prove subsequential convergence and lack iteration complexity analysis. In contrast to
these results, we not only prove the sequential convergence to a Karush-Kuhn-Tucker (KKT) point of the proposed algorithm but also derive the iteration complexity for finding an approximate KKT point. 

\subsection{Our Contributions}

In this work, we study the SAA \eqref{P:CCP sample} of the chance constrained program \eqref{P:CCP} when the distribution of the random vector $\bxi$ is unknown, but a sample of $N$ i.i.d. realizations $\{\hat{\bxi}^i\}_{i=1}^N$ of $\bxi$ is available. To solve this problem, we reformulate the SAA problem \eqref{P:CCP sample} into a DC constrained DC program by utilizing Assumption \ref{AS:1} and the empirical quantile function of $C(\bx,\bxi)$ over the sample $\{\hat{\bxi}^i\}_{i=1}^N$. Then, we propose a proximal DC algorithm (pDCA) for solving the reformulation, which proceeds by solving a sequence of convex subproblems by linearizing the second component of the obtained DC functions and adding a proximal term to the objective function.
In particular, we show that it is easy to compute the required subgradients by using the structure of the DC functions. Moreover, the obtained subproblem can be rewritten in a form that is suitable for off-the-shelf solvers. 
Finally, we analyze the convergence and iteration complexity of the proposed method. Specifically, we show that any accumulation point of the sequence generated by the proposed method is a KKT point of the reformulated problem under a constraint qualification. Next, we establish the sequential convergence along with its convergence rate using the Kurdyka-\L ojasiewicz (K\L) inequality with the associated exponent \cite{attouch2009convergence,attouch2010proximal,kurdyka1998gradients}. Moreover, we further show that the obtained DC program is equivalent to a convex constrained problem with a concave objective, which is amenable to the Frank-Wolfe (FW) method. By further showing the equivalence between proximal DC iterations for solving the DC program and modified FW iterations for solving the equivalent problem, we derive the iteration complexity of the pDCA for computing an approximate KKT point. In particular, in contrast to the standard iteration complexity of the FW method $O(1/\sqrt{k})$ (see, e.g., \cite{lacoste2016convergence}), the iteration complexity of our considered FW method is improved to $O(1/k)$ by utilizing the DC structure, where $k$ is the number of iterations. To support and complete our theoretical results, we conduct extensive experiments on both synthetic and real-world data sets. These experiments demonstrate the effectiveness of our proposed method. For implementation details and reproducibility, we refer the reader to \cite{wang2025proximal} and our publicly available code repository at \url{https://github.com/peng8wang/2024.0648}.

\subsection{Related Works}

We first review some popular methods for solving chance constrained programs and then briefly talk about some DC algorithms closely related to our work. Since the first appearance of chance constrained programs in \cite{charnes1959chance,charnes1958cost}, various algorithms for solving chance constrained problems under different settings have been proposed in a substantial body of literature over the past years. 
One well-known approach for solving Problem \eqref{P:CCP} is to reformulate the chance constraint into a convex constraint when the distribution of $\bxi$ is available. For example, \citet[Lemma 2.2]{henrion2007structural} showed that the chance constraint can be reformulated into a second-order cone if $C(\bx,\bxi)=\langle \bxi,\bx \rangle - b$, $\bxi$ has an elliptical symmetric distribution, and $b$ is a scalar.
We refer the reader to \cite{ghaoui2003worst,lagoa2005probabilistically,calafiore2006distributionally,henrion2008convexity,prekopa2003probabilistic} for more results on the convexity of the feasible region formed by chance constraints. These convex reformulations generally require a special distribution on random vector $\bxi$, such as Gaussian or log-concave distributions. 

However, in practice, sometimes only a few random sample points from the distribution of $\bxi$ are available while the distribution of $\bxi$ is unknown. To handle this scenario, one popular approach is to consider the SAA of the problem (see Problem \eqref{P:CCP sample}), which is obtained by replacing the true distribution with an empirical distribution corresponding to random sample points. \cite{luedtke2008sample} showed that the SAA with a risk level smaller than the required risk level can obtain a solution satisfying a chance constraint with high probability under suitable conditions. 
Later, \cite{pagnoncelli2009sample} showed that a solution of the SAA problem converges to that of the original problem with probability approaching one as $N$ goes to infinity. 
Despite the fact that it possesses nice convergence properties, the SAA problem \eqref{P:CCP sample} is generally difficult to optimize due to its discrete nature. To solve it, many different approaches have been proposed in the literature. For example, \cite{ahmed2008solving} proposed a mixed-integer programming (MIP) reformulation for the SAA problem; see also \cite{kuccukyavuz2022chance,luedtke2008sample,luedtke2010integer,ruszczynski2002probabilistic} and the references therein. \cite{curtis2018sequential} proposed a sequential algorithm, which minimizes quadratic subproblems with linear cardinality constraints iteratively.
\cite{bai2021augmented} proposed an augmented Lagrangian decomposition method for solving Problem \eqref{P:CCP} when $\bxi$ has a finite discrete distribution and $c_j(\cdot,\bxi)$ for $j=1,\dots,m$ are all affine.
Recently, \cite{pena2020solving} proposed a smoothing non-linear approximation of Problem \eqref{P:CCP sample} based on the empirical quantile of the chance constraint and developed a S$\ell_1$QP-type trust-region method to solve the approximation problem. Using a similar idea, \cite{shen2021sample} proposed a neural network model to approximate the empirical quantile of the chance constraint and employed a simulated annealing algorithm for solving the approximation problem. In general, some methods, such as \cite{shen2021sample}, are heuristic in nature, and some other works, such as \cite{bai2021augmented,curtis2018sequential,pena2020solving}, only establish subsequential convergence for their proposed methods and have no iteration complexity analysis. 
The scenario approximation approach proposed in \cite{calafiore2006scenario,nemirovski2006scenario} is another well-known sample-based approach for solving Problem \eqref{P:CCP}. This approach is simple and easy to implement, but it suffers from the solution becoming more and more conservative as the sample size increases. 

Another notable approach for solving Problem \eqref{P:CCP} is to consider its conservative and tractable approximations. 
Among these approximations, the most famous one is the condition value-at-risk (CVaR) approximation proposed by \cite{nemirovski2007convex}, which is based on a conservative and convex approximation of the indication function. In particular, \cite{hong2009simulating} proposed a gradient-based Monte Carlo method for solving the CVaR approximation. To avoid overly conservative solutions, \cite{hong2011sequential} studied a DC approximation of the chance constraint and tackled it by solving a sequence of convex approximations. 
\cite{xie2020bicriteria} proposed a bicriteria approximation for solving chance constrained covering problems and proved a constant factor approximation guarantee. More recently, \cite{jiang2022also} proposed a convex approximation named ALSO-X that always outperforms the CVaR approximation when uncertain constraints are convex. Moreover, \cite{kannan2021stochastic} proposed a stochastic approximation method for solving the chance-constrained nonlinear programs using smooth approximations. In addition, many other approximations have been studied for solving chance-constrained problems; see, e.g., \cite{shan2014smoothing,geletu2017inner,cao2020sigmoidal}.

Recently, \cite{laguel2024chance} applied bilevel optimization to solve chance constrained programs when the objective function and the constraints are convex with respect to the decision parameter. In addition, $p$-efficient point-based methods have been studied for solving chance constrained programs, where a $p$-efficient point is a realization of a random variable that lies within the top $p\%$ of all possible outcomes in terms of the value of the constraint function. \cite{dentcheva2000concavity} applied this method for solving chance constrained programs with discrete distributions. Later, \cite{kogan2014threshold,kogan2016erratum} extended this approach to solve joint chance constrained programs. \cite{cui2022nonconvex} considered two generalizations of chance constrained programs involving probabilities of disjunctive nonconvex functional events and mixed-signed affine combinations of the resulting probabilities. They proposed a new algorithmic approach that combines parameterized approximations, sampling-based expectation approximations, constraint penalization, and convexification to solve the generalized problems.


DC constrained DC programs refer to optimization problems that minimize a DC function subject to constraints defined by DC functions. 
 Such problems have been extensively studied in the literature for decades \citep{dinh2014recent,horst1999dc,le2018dc}. One of the most popular methods for solving DC programs is the DC algorithm and its variants, which solve a sequence of convex subproblems by linearizing the second component of DC functions \citep{hong2011sequential,lu2012sequential,tao1997convex}.
\cite{le2014dc} proposed a penalty method and a DC algorithm using slack variables and showed that every accumulation point of the generated sequence is a KKT point of the considered problem. Later, \cite{pang2017computing} studied the proximal linearized method for DC programs and showed that every accumulation point of the generated sequence is a Bouligand-stationary point under proper conditions.
Recently, \cite{van2021bundle} developed a proximal bundle method for addressing DC programs and analyzed its convergence under different settings. \cite{lu2022penalty} proposed penalty and augmented Lagrangian methods for solving DC programs, and established strong convergence guarantees for the proposed methods. 

\subsection{Notation and Definitions}

Besides the notation introduced earlier, we shall use the following notation throughout the paper. We write matrices in bold capital letters $\bA$, vectors in bold lower-case letters $\ba$, and scalars in plain letters $a$. Given a matrix $\bA \in \R^{m\times n}$, we use $a_{ij}$ to denote its $(i,j)$-th element. Given a vector $\bx \in \R^{n}$, we use $\|\bx\|$ to denote its Euclidean norm, $x_i$ its $i$-th element, and $x_{[M]}$ its $M$-th smallest element. We use $\bo$ and $\b0$ to denote the all-one vector and all-zero vector, respectively.

Next, we introduce some concepts in non-smooth analysis that will be used in our subsequent development. The details can be found in \cite{RW04}. Let $\varphi:\R^n \rightarrow (-\infty,\infty]$ be a given function. We say that the function $\varphi$ is \textit{proper} if $\mathrm{dom}(\varphi):=\{\bx\in \R^n: \varphi(\bx) < \infty\}\neq \emptyset$. A vector $\bs \in \R^n$ is said to be a \emph{Fr\'{e}chet subgradient} of $\varphi$ at $\bx \in \dom(\varphi)$ if
\begin{align}\label{eq:frech-subg}
\liminf_{\by\rightarrow\bx, \by\not=\bx} \frac{ \varphi(\by) - \varphi(\bx) - \langle \bs,\by-\bx \rangle }{ \|\by-\bx\|_2} \ge 0.
\end{align}
The set of vectors $\bs \in \R^n$ satisfying \eqref{eq:frech-subg} is called the \emph{Fr\'{e}chet subdifferential} of $f$ at $\bx \in \mathrm{dom}(\varphi)$ and denoted by $\widehat{\partial}\varphi(\bx)$. The \emph{limiting subdifferential}, or simply the \emph{subdifferential}, of $\varphi$ at $\bx\in \mathrm{dom}(\varphi)$ is defined as
\begin{align}
\partial \varphi(\bx) = \{ \bs \in\R^{n}: \exists \bx^k\rightarrow\bx, \bs^k \rightarrow \bm s \mbox{ with }\varphi(\bx^k)\rightarrow \varphi(\bx),\bs^k\in\widehat{\partial} \varphi(\bx^k)\}.
\notag 
\end{align}
When $\varphi$ is proper and convex,  thanks to \cite[Proposition 8.12]{RW04}, the limiting subdifferential of $\varphi$ at $\bx \in \mathrm{dom}(\varphi)$ coincides with the classic subdifferential defined as
\begin{align}\label{def:subg}
\partial \varphi(\bx) = \{\bs \in \R^n: \varphi(\by) \ge \varphi(\bx) + \langle \bs, \by-\bx \rangle,\ 
&\text{for all}\ \by \in \R^n\}.
\end{align}
We define the convex conjugate of a proper closed and convex function $\varphi$ as 
\begin{align*}
    \varphi^*(\bm y) = \sup_{\bm x \in \R^n} \left\{ \langle \bm y, \bm x \rangle - \varphi(\bm x) \right\}. 
\end{align*}

For a non-empty set $\mS \subseteq \R^n$, its {\em indicator function} $\delta_\mS: \R^n \rightarrow \{0,+\infty\}$ is defined as
\begin{align*}
\delta_\mS(\bx) = \begin{cases}
0, & \text{if}\ \bx \in \mS, \\
+\infty, & \text{otherwise}.
\end{cases}
\end{align*}
Its {\em normal cone} (resp. Fr\'echet normal cone) at $\bx \in \mS$ is defined as $\mN_{\mS}(\bx) := \partial \delta_{\mS}(\bx)$ (reps. $\widehat{\mN}_{\mS}(\bx) := \widehat{\partial} \delta_{\mS}(\bx)$). Moreover, its tangent cone at $\bm x \in \mathcal{S}$ is $\mathcal{T}_{\mathcal{S}}(\bm x) := \{\bm w \in \R^n:  (\bm x^k - \bm x)/\tau^k \to \bm w\ \text{for some}\ \bm x^{k} \to \bm x\ \text{with}\ \bm x^k \in \mathcal{S}\ \text{and}\ \tau^k \searrow 0  \}$. Given a point $\bx \in \R^n$, its distance to $\mS$ is defined as $\mathrm{dist}(\bx,\mS) = \inf_{\by \in \mS}\|\bx-\by\|$. We say that $\mS$ is \emph{regular} at one of its points $\bx$ if it is locally closed and satisfies $\mN_\mS(\bx)=\widehat{\mN}_{\mS}(\bx)$. In addition, we say that a function $\varphi$ is \emph{regular} at $\bx$ if $\varphi(\bx)$ is finite and its epigraph $\mathrm{epi}(\varphi)$ is regular at $(\bx,\varphi(\bx))$. Suppose that $\varphi$ is a convex function. The directional derivative of $\varphi$ at $\bx \in \R^n$ in the direction $\bd \in \R^n$ is defined by
\begin{align*}
\varphi^\prime(\bx,\bd) = \lim_{t\searrow 0}\frac{\varphi(\bx+t\bd)-\varphi(\bx)}{t}.
\end{align*}
In particular, it holds that
\begin{align}\label{eq:dir der}
\varphi^\prime(\bx,\bd) = \sup\left\{\langle \bs,\bd \rangle:\ \bs \in \partial \varphi(\bx) \right\}.
\end{align}

We next introduce the K\L\ property with the associated exponent; see, e.g., \cite{attouch2009convergence,attouch2010proximal,attouch2013convergence,kurdyka1998gradients}.
\begin{defi}[K\L\ property and exponent]\label{def:KL}
Suppose that $\varphi:\R^n\rightarrow (-\infty,\infty]$ is proper and lower semicontinuous. The function $\varphi$ is said to satisfy the K\L\ property at $\bar{\bx} \in \left\{ \bx \in \R^n: \partial \varphi(\bx) \neq \emptyset\right\}$
if there exist a constant $\eta \in (0,\infty]$, a neighborhood $U$ of $\bar{\bx}$, and a continuous concave function $\psi:[0,\eta)\rightarrow \R_+$ with $\psi(0)=0$, $\psi$ being continuously differentiable on $(0,\eta)$, and $\psi^\prime(s) > 0$ for $s \in (0,\eta)$ such that
\begin{align}\label{eq:KL ineq}
\psi^\prime\left( \varphi(\bx) - \varphi(\bar{\bx})\right) \mathrm{dist}(0,\partial \varphi(\bx)) \ge 1
\end{align}
for all $\bx \in U$ satisfying $\varphi(\bar{\bx}) < \varphi(\bx) < \varphi(\bar{\bx}) + \eta$. In particular, if $\psi(s)=cs^{1-\theta}$ for some $c>0$ and $\theta \in (0,1)$, then $\varphi$ is said to satisfy the K\L\ property at $\bar{\bx}$ with exponent $\theta$.
\end{defi}
\noindent It is worth mentioning that a wide range of functions arising in applications satisfy the K\L\ property, such as proper and lower semicontinuous semialgebraic functions \citep{attouch2010proximal}.

The rest of this paper is organized as follows. In \Cref{sec:algo}, we reformulate Problem \eqref{P:CCP sample} into a DC constrained DC program and introduce the proposed algorithm pDCA. In \Cref{sec:conv}, we analyze the convergence and iteration complexity of the proposed method. In \Cref{sec:exte}, we discuss some extensions of our approach. In \Cref{sec:expe}, we report the experimental results of the proposed method and other existing methods. We end the paper with some conclusions in \Cref{sec:conc}.	  

\section{A Proximal DC Algorithm for Chance Constrained Programs}\label{sec:algo}

In this section, we first reformulate Problem \eqref{P:CCP sample} 
into a DC constrained DC program based on the empirical quantile. Then, we propose a proximal DC algorithm (pDCA) for solving the reformulation. 
To proceed, we introduce some further notions that will be used in the sequel. Let
\begin{align}\label{def:c}
C(\bx,\bxi) := \max\left\{ c_i(\bx,\bxi):i=1,\dots,m\right\}.
\end{align}
Given a sample $\{\hat{\bxi}^i\}_{i=1}^N$, let
\begin{align}\label{def:cN}
\widehat{C}(\bx) := \left(C(\bx,\hat{\bxi}^1),\dots, C(\bx,\hat{\bxi}^N)\right) \in \R^N.
\end{align}
We define the $p$-th empirical quantile of $C(\bx,\bxi)$ over the sample $\{\hat{\bxi}^i\}_{i=1}^N$ for a probability $p \in (0,1)$ by
\begin{align*}
\hat{Q}_C(p) := \inf\left\{y \in \R: \frac{1}{N} \sum_{i=1}^N \1 {\{C(\bx,\hat{\bxi}^i) \le y\}} \ge p \right\}. 
\end{align*}
Throughout this section, let
\begin{align}\label{eq:M T}
M :=\lceil (1-\alpha)N \rceil.
\end{align} 

\subsection{DC Reformulation of the Chance Constraint}

In this subsection, we reformulate the sample-based chance constraint in Problem \eqref{P:CCP sample} into a DC constraint using the empirical quantile function of $C(\bx,\bxi)$ over the sample $\{\hat{\bxi}^i\}_{i=1}^N$. To begin, according to \cite[Chapter 21.2]{van2000asymptotic}, the $(1-\alpha)$-th empirical quantile of $C(\bx,\bxi)$ over the sample $\{\hat{\bxi}^i\}_{i=1}^N$ for $\alpha \in (0,1)$ is
\begin{align*}
\hat{Q}_C(1-\alpha) = \widehat{C}_{[M]}(\bx),
\end{align*}
where $\widehat{C}_{[M]}(\bx)$ denotes the $M$-th smallest element of $\widehat{C}(\bx)$. This leads to an equivalent reformulation of Problem \eqref{P:CCP sample} as follows:
\begin{align}\label{P:CCP quan}
\min_{\bx \in \mX}\left\{ f(\bx):\ \widehat{C}_{[M]}(\bx) \le 0\right\}.
\end{align}
We should mention that the empirical quantile constraint has been considered in the literature. For example, \cite{pena2020solving} considered smooth approximations of the quantile constraint, and \cite{cui2018portfolio} split the quantile constraint into some easier pieces by introducing new variables. 
In contrast, we directly handle the quantile constraint by reformulating it into a DC form.
To simplify our development, we denote the constraint set of Problem \eqref{P:CCP quan} by
\begin{align}\label{set:Z}
\mZ_M :=  \left\{\bx\in \R^n: \widehat{C}_{[M]}(\bx) \le 0\right\}.
\end{align}
Note that if $\alpha < 1/N$ and $M=N$, this constraint implies $C(\bx,\hat{\bxi}^i) \le 0$ for all $i\in [N]$. This, together with \Cref{AS:1}(c) and \eqref{def:c}, implies that the constraint set $\mZ_N$ is convex. For this case, Problem \eqref{P:CCP quan} minimizes a DC objective function subject to convex constraints, and many existing algorithms in the literature have been proposed to solve this problem; see, e.g., \cite{dinh2014recent} and the references therein. To avoid this case, we assume that $M \le N-1$ throughout this paper. Using the structure of the function $\widehat{C}(\cdot)$ and the convexity of $c_i(\cdot,\bxi)$, we show that the above constraint is equivalent to a DC constraint.

\begin{lemma}\label{lem:set-Z}
Suppose that Assumption \ref{AS:1} holds and that $M \le N-1$. Let
\begin{align}\label{eq:G H}
G(\bx) := \sum_{i=M}^N \widehat{C}_{[i]}(\bx),\  H(\bx) :=  \sum_{i=M+1}^N \widehat{C}_{[i]}(\bx).
\end{align}
Then, $G$ and $H$ are both continuous and convex functions, and the chance constraint in \eqref{set:Z} is equivalent to a DC constraint
\begin{align}\label{decom cN}
  G(\bx) - H(\bx) \le 0.
\end{align}
\end{lemma}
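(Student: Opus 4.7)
The plan is to establish the DC identity $G(\bx) - H(\bx) = \widehat{C}_{[M]}(\bx)$ by direct telescoping and then separately verify that both $G$ and $H$ are continuous and convex.

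First, the equivalence part is essentially immediate. By the definitions in \eqref{eq:G H},
\begin{align*}
G(\bx) - H(\bx) = \sum_{i=M}^N \widehat{C}_{[i]}(\bx) - \sum_{i=M+1}^N \widehat{C}_{[i]}(\bx) = \widehat{C}_{[M]}(\bx),
\end{align*}
so $\bx \in \mZ_M$ iff $G(\bx) - H(\bx) \le 0$, yielding \eqref{decom cN}.

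The main work is convexity. I would argue this via the well-known fact that, for a vector $\bv \in \R^N$ and any integer $1 \le k \le N$, the \emph{sum of the top-$k$ entries}
\[
T_k(\bv) := \sum_{i=N-k+1}^N v_{[i]} = \max\left\{\sum_{i \in S} v_i : S \subseteq \{1,\dots,N\},\ |S|=k\right\}
\]
is a convex function of $\bv$, being a pointwise maximum of linear functions. Moreover $T_k$ is non-decreasing in each coordinate, since raising $v_j$ can only weakly increase each term in the max. Now by Assumption \ref{AS:1}(c) and \eqref{def:c}, each coordinate $\widehat{C}_i(\bx)=C(\bx,\hat{\bxi}^i)=\max_{j}c_j(\bx,\hat{\bxi}^i)$ is a convex function of $\bx$. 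Composing the convex coordinatewise-nondecreasing function $T_k$ with a vector of convex functions preserves convexity, so $G(\bx)=T_{N-M+1}(\widehat{C}(\bx))$ and $H(\bx)=T_{N-M}(\widehat{C}(\bx))$ are both convex on the open set containing $\mX$ on which the $c_j(\cdot,\bxi)$ are defined.

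For continuity, since each $c_j(\cdot,\hat{\bxi}^i)$ is continuous on an open set containing $\mX$, each $\widehat{C}_i(\cdot)$ is continuous (max of finitely many continuous functions), and the order statistics $\widehat{C}_{[i]}(\cdot)$ are continuous functions of $\widehat{C}(\cdot)$, hence continuous in $\bx$; so $G$ and $H$ are continuous as finite sums of continuous functions. (Alternatively, convex functions are automatically continuous on the interior of their domain, which suffices here.)

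I do not expect any significant obstacle: the telescoping identity is one line, and the only mildly subtle point is invoking the monotone-convex composition rule to conclude convexity of $T_k \circ \widehat{C}$, which hinges on verifying that the top-$k$ sum is coordinatewise non-decreasing; this is why expressing $T_k$ as the max over $k$-subsets is the cleanest route.
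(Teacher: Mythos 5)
Your proof is correct, and the core ingredients coincide with the paper's: the one-line telescoping identity $G(\bx)-H(\bx)=\widehat{C}_{[M]}(\bx)$ (the paper states this as the decomposition \eqref{eq:decomp zM}), and the representation of a top-$k$ sum as a maximum over $k$-element index subsets. The difference is in how that representation is deployed for convexity. The paper substitutes the convex functions directly: it writes $H(\bx)=\max\bigl\{\sum_{t=1}^{N-M}\widehat{C}_{i_t}(\bx): 1\le i_1<\dots<i_{N-M}\le N\bigr\}$, so $H$ is a pointwise maximum of finite sums of convex functions and convexity follows from just two elementary facts (sums and pointwise maxima of convex functions are convex); no monotonicity is ever needed. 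You instead factor through the function $T_k$ on $\R^N$ and invoke the monotone-convex composition rule, which forces you to verify that $T_k$ is coordinatewise non-decreasing. Both routes are valid; the paper's is marginally more self-contained, while yours isolates a reusable structural fact about $T_k$ (convexity plus monotonicity of the top-$k$ sum) that would, for instance, immediately handle the L-estimator extension in \Cref{subsec:ext1}, where the paper instead re-runs the same argument. Your continuity argument is also fine and slightly more explicit than the paper's, which simply cites the definitions and Assumption \ref{AS:1}(c).
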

\begin{proof}

The continuity of $G$ and $H$ directly follows from \Cref{AS:1}(c), \eqref{def:c}, \eqref{def:cN}, and \eqref{eq:G H}. Since $H(\bx)$ denotes the sum of $N-M$ largest components of $\widehat{C}(\bx)$, we rewrite it as
\begin{align}\label{eq3:lem-set-Z} 
H(\bx) = \max\left\{\sum_{t=1}^{N-M} \widehat{C}_{i_t}(\bx):\ {1 \le i_1 < i_2 < \dots < i_{N-M} \le N} \right\}.
\end{align}
According to the convexity of $c_j(\bx,\hat{\bxi}^i)$ for all $i=1,\dots,N$ and $j=1,\dots,m$ due to Assumption \ref{AS:1}(c) and the fact that the pointwise maximum of convex functions is still convex \cite[Proposition 2.1.2]{hiriart2004fundamentals}, we see that $C(\bx,\hat{\bxi}^i)$ for all $i=1,\dots,N$ are convex.
This, together with \eqref{eq3:lem-set-Z}, the fact that the sum of convex functions is convex, and the fact that the pointwise maximum of convex functions is still convex, implies that $H(\bx)$ is convex.  By the same argument, we show that $G(\bx)$ is convex. Given $\bz \in \R^N$ and $M \le N-1$, one can decompose $z_{[M]}$ as
\begin{align}\label{eq:decomp zM}
z_{[M]} = \sum_{i=M}^N z_{[i]} -  \sum_{i=M+1}^N z_{[i]},\ & \text{for all}\ M=1,\dots,N-1.
\end{align}
This, together with \eqref{eq:G H}, implies that $\widehat{C}_{[M]}(\bx) \le 0$ is equivalent to \eqref{decom cN}. 

\end{proof} 

Consequently, using Lemma \ref{lem:set-Z} and Assumption \ref{AS:1}(a), Problem \eqref{P:CCP quan} can be cast as the following DC constrained DC program:
\begin{align}\label{CCP:sample-DC}
\begin{aligned}
\min_{\bx \in \mX}\ f(\bx) := g(\bx) - h(\bx)\qquad \st\ \ G(\bx) - H(\bx) \le  0,
\end{aligned}
\end{align}
where $g,h$ are continuous and convex and $G,H$ defined in \eqref{eq:G H} are also continuous and convex. 

\subsection{A Proximal DC Algorithm for Chance Constrained Programs}
In this subsection, we propose a proximal DC algorithm for solving Problem \eqref{CCP:sample-DC}. To begin, we define
\begin{align}
& \mI := \left\{(i_1,i_2,\dots,i_{N-M}):1 \le i_1 < i_2 < \dots < i_{N-M} \le N\right\},\label{set:I}
\end{align}
and denote the active index set of $C(\bx,\hat{\bxi}^i)$ and $H(\bx)$ in \eqref{eq:G H} respectively by
\begin{align}
& \mM_{c}^i(\bx) := \left\{ j\in\{1,\dots,m\}: c_j(\bx,\hat{\bxi}^i)=C(\bx,\hat{\bxi}^i)\right\}, \label{index:c i}\\
& \mM_H(\bx) := \left\{I \in \mI: \sum_{t=1}^{N-M} \widehat{C}_{i_t}(\bx) =  H(\bx)\right\}.\label{index:h H}
\end{align}
These equations define the set of indices $j$ where $c_j(\bx,\hat{\bxi}^i)$ attains the maximum (Eq \eqref{index:c i}) and the index set of the $N-M$ largest elements in $\{C(\bx,\hat{\bxi}^{i})\}_{i=1}^N$ (Eq \eqref{index:h H}), respectively. 
Next, we specify how to compute the subgradient of $H(\bx)$ efficiently by utilizing its structure. 
\begin{lemma}\label{lem:subg H}
Suppose that Assumption \ref{AS:1} holds. 
Let $H$ be defined in \eqref{eq:G H}. Given an $\bx \in \R^n$, it holds that
\begin{align}\label{subg:H}
\partial H(\bx) = {\rm conv}\left\{\medcup \sum_{t=1}^{N-M} \partial \widehat{C}_{i_t}(\bx): (i_1,\dots,i_{N-M}) \in \mM_H(\bx)\right\},
\end{align}
where
\begin{align}\label{subg:c i}
\partial \widehat{C}_{i}(\bx) = {\rm conv}\left\{\cup \{\nabla c_j(\bx,\hat{\bxi}^i)\}: j \in \mM_c^i(\bx)\right\}
\end{align}
for all $i=1,\dots,N$ and ${\rm conv}(\mathcal{A})$ denotes the convex hull of the set $\mathcal{A}$.
\end{lemma}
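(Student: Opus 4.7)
The plan is to derive both subdifferential formulas as straightforward consequences of two classical rules from convex analysis: the subdifferential of a pointwise maximum of finitely many convex functions and the Moreau–Rockafellar sum rule. Since all functions involved are continuous and convex on all of $\R^n$, both rules apply without any constraint qualification.

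First I would establish \eqref{subg:c i}. Recall that $\widehat{C}_i(\bx) = C(\bx,\hat{\bxi}^i) = \max_{j=1,\dots,m} c_j(\bx,\hat{\bxi}^i)$ by \eqref{def:c} and \eqref{def:cN}. Each $c_j(\cdot,\hat{\bxi}^i)$ is convex and continuously differentiable by Assumption \ref{AS:1}(c), so $\partial c_j(\bx,\hat{\bxi}^i) = \{\nabla c_j(\bx,\hat{\bxi}^i)\}$. Applying the max-rule for subdifferentials of pointwise maxima of finitely many convex functions (see \cite[Theorem 4.4.2]{hiriart2004fundamentals}) yields
\begin{align*}
\partial \widehat{C}_i(\bx) \;=\; {\rm conv}\!\left\{ \nabla c_j(\bx,\hat{\bxi}^i) : j \in \mM_c^i(\bx) \right\},
\end{align*}
which is precisely \eqref{subg:c i}.

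Next I would handle \eqref{subg:H}. Using the representation \eqref{eq3:lem-set-Z}, write $H(\bx) = \max_{I \in \mI} F_I(\bx)$ where $F_I(\bx) := \sum_{t=1}^{N-M} \widehat{C}_{i_t}(\bx)$ for $I=(i_1,\dots,i_{N-M})$. By the proof of Lemma \ref{lem:set-Z}, each $\widehat{C}_{i_t}$ is continuous and convex on $\R^n$, so each $F_I$ is a finite sum of such functions and hence convex and continuous. Applying once more the max-rule to the finite family $\{F_I\}_{I \in \mI}$ (with active set $\mM_H(\bx)$ as defined in \eqref{index:h H}) gives
\begin{align*}
\partial H(\bx) \;=\; {\rm conv}\!\left\{ \textstyle\bigcup_{I \in \mM_H(\bx)} \partial F_I(\bx) \right\}.
\end{align*}
Finally, the Moreau–Rockafellar sum rule applied to each $F_I$, which holds without qualification because every $\widehat{C}_{i_t}$ is a finite convex function on $\R^n$, gives $\partial F_I(\bx) = \sum_{t=1}^{N-M} \partial \widehat{C}_{i_t}(\bx)$. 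Substituting this into the previous display yields \eqref{subg:H}.

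I do not anticipate a real obstacle here; the only thing to be careful about is to invoke the two classical rules with their hypotheses explicitly verified (convexity of $c_j(\cdot,\hat{\bxi}^i)$ via Assumption \ref{AS:1}(c), convexity of each $\widehat{C}_{i_t}$ as a max of convex functions, and finiteness of all functions to avoid a constraint qualification in the sum rule). Once these preliminaries are cleanly stated, the two displays above finish the argument.
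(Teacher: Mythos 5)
Your proposal is correct and follows essentially the same route as the paper's proof: both apply the max-rule for subdifferentials of finite pointwise maxima of convex functions twice (once to $H$ via the representation \eqref{eq3:lem-set-Z} and once to each $\widehat{C}_i$ using Assumption \ref{AS:1}(c)), with the sum rule for finite-valued convex functions connecting the two steps. Your version is slightly more careful in spelling out why the Moreau--Rockafellar sum rule needs no constraint qualification, but the substance is identical.
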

\begin{proof}

It follows from \eqref{eq3:lem-set-Z} and the rule of calculating the subdifferential of
the pointwise maximum of convex functions (see \Cref{lem:subdiff rule} (i)) that
\begin{align*}
\partial H(\bx) & = {\rm conv}\left\{\medcup\partial \sum_{t=1}^{N-M} \widehat{C}_{i_t}(\bx): (i_1,\dots,i_{N-M}) \in \mM_H(\bx)\right\} \notag \\
& = {\rm conv}\left\{\medcup \sum_{t=1}^{N-M} \partial \widehat{C}_{i_t}(\bx): (i_1,\dots,i_{N-M}) \in \mM_H(\bx)\right\},
\end{align*}
where the second equality follows from \Cref{lem:subdiff rule} (ii) and the relative interior of $\mathrm{dom}(c_j(\cdot,\bm \xi))$ is $\R^n$ due to the continuous differentiability of $c_j(\cdot,\bm \xi)$ on $\R^n$ by \Cref{AS:1}(c). Since $\widehat{C}_{i}(\bx) = C(\bx,\hat{\bxi}^i)=\max \{c_j(\bx,\hat{\bxi}^i):j=1,\dots,m \}$ for any $i\in\{1,\dots,N\}$, using the rule of calculating the subdifferential of the pointwise maximum of convex functions again and Assumption \ref{AS:1}(c), we obtain \eqref{subg:c i}. 

\end{proof}

Armed with the above setup, we are ready to propose a proximal DC algorithm for solving Problem \eqref{CCP:sample-DC}.
Specifically, suppose that an initial point $\bx^0 \in \mX$ satisfying $G(\bx^0) - H(\bx^0) \le 0$ is available. At the $k$-th iteration, we choose
$\bs_h^k \in \partial h(\bx^k)$ and $\bs_H^k \in \partial H(\bx^k)$, and generate the next iterate $\bx^{k+1}$ by solving the following convex subproblem
\begin{align}\label{DC:subproblem}
\begin{aligned}
\bx^{k+1} \in \argmin_{\bx \in \mX}\quad & g(\bx) -  h(\bx^k) - \langle \bs_h^k, \bx-\bx^k \rangle  + \frac{\beta}{2}\|\bx-\bx^k\|^2 \\
\st\quad &  G(\bx) - H(\bx^k) - \langle \bs_H^k, \bx-\bx^k \rangle  \le 0,
\end{aligned}
\end{align}
where $\beta \ge 0$ is a penalty parameter. As shown in Lemma \ref{lem:subg H}, the subgradient $\bs_H^k$ can be easily computed. However, Problem \eqref{DC:subproblem} is still not suitable for off-the-shelf solvers, because it is difficult to directly input $G(\bx)$ defined in \eqref{eq:G H}, which involves the sum of the $N-M+1$ largest components of $\widehat{C}(\bx,\bxi)$, into solvers due to its combinatorial nature. To address this issue, we reformulate Problem \eqref{DC:subproblem} into a form that is suitable for solvers by introducing an auxiliary variable $\bz \in \R^N$ such that $C(\bx,\hat{\bxi}^i) \le z_i$, for all $i=1,\dots,N$. Note that
\begin{align*}
\sum_{i=M}^Nz_{[i]} = \max_{\bu \in \R^N}\left\{\langle \bu,\bz\rangle:\ \bm{0} \le \bu \le \bo,\ \bo^T\bu=N-M+1 \right\}.
\end{align*}
This is a linear program and its dual problem is
\begin{align*}
\min_{\bm{\lambda} \in \R^N, \mu \in \R} \left\{\langle \bo,\bm{\lambda}\rangle + (N-M+1)\mu:\ \bz - \bm{\lambda} - \mu\bo \le \bm{0},\ \bm{\lambda} \ge \bm{0} \right\}.
\end{align*}
Using the strong duality of linear programming, we rewrite Problem \eqref{DC:subproblem} as
\begin{align} \label{DC:subp z}
\begin{aligned}
\bx^{k+1} = \argmin_{\bx \in \mX,\bz \in \R^N,\bm{\lambda} \in \R^N, \mu \in \R}\quad & g(\bx) - h(\bx^k) - \langle \bs_h^k, \bx-\bx^k \rangle + \frac{\beta}{2}\|\bx-\bx^k\|^2 \\
\st\quad & \langle \bo,\bm{\lambda}\rangle + (N-M+1)\mu - H(\bx^k) - \langle \bs_H^k, \bx-\bx^k \rangle  \le 0, \ \\
& \bz - \bm{\lambda} - \mu\bo \le \bm{0},\ \bm{\lambda} \ge \bm{0},  \\
& c_j(\bx,\hat{\bxi}^i) - z_i \le 0,\ \forall\ i=,1\dots,N,\ j=1,\dots,m.
\end{aligned}
\end{align}
We remark that we can eliminate the auxiliary variable $\bz \in \R^N$ by combining $c_j(\bx,\hat{\bxi}^i) - z_i \le 0$ for $i=1,\dots, N,~j=1,\ldots,m$ and $\bz - \bm{\lambda} - \mu\bo \le \bm{0}$ together and obtain $c_j(\bx,\hat{\bxi}^i) - \lambda_i - \mu  \le 0$ for $i=1,\dots, N,~j=1,\ldots,m$. We now summarize the proposed proximal DC algorithm in Algorithm \ref{alg-1}.

\begin{algorithm}[!hbtp]
	\caption{A Proximal DC Algorithm for Chance Constrained Programs}
	\begin{algorithmic}[1]
		\STATE \textbf{Input}: data sample $\{\hat{\bxi}_i\}_{i=1}^N$, feasible point $\bx^0$, $\beta \ge 0$.
		\FOR{$k=0,1,\dots$}
		\STATE take any $\bs_h^k \in \partial h(\bx^k)$ and $\bs_H^k \in \partial H(\bx^k)$
		\STATE solve Problem \eqref{DC:subp z} to obtain an $\bx^{k+1}$
		\IF{a termination criterion is met}
		\STATE stop and return $\bx^{k+1}$
		\ENDIF
	\ENDFOR 
	\end{algorithmic}
	\label{alg-1}
\end{algorithm}

Before we proceed, let us make some remarks on Algorithm \ref{alg-1}. 
First, Algorithm \ref{alg-1} is closely related to sequential convex programming methods in \cite{lu2012sequential,yu2021convergence}. However, different from them, we exploit the structure of the DC function and reformulate the subproblem into a form that is suitable for off-the-shelf solvers. 
Second, our DC approach significantly differs from that in \cite{hong2011sequential}. Given finite realizations $\{\hat{\bm \xi}^{i}\}_{i=1}^N$, the following DC approximation for the chance constraint is used in \cite{hong2011sequential}: 
\begin{align*}
\inf_{\epsilon > 0}\ \frac{1}{\epsilon N}\sum_{i=1}^N \left( \max\{\epsilon + C(\bm x, \hat{\bm \xi}^i),0\} -  \max\{C(\bm x, \hat{\bm \xi}^i),0\} \right) \le \alpha. 
\end{align*}  
In this approximation, the hyperparameter $\epsilon$ needs to be carefully tuned to achieve good performance in practice. Specifically, if $\epsilon$ is small, the approximation is better but the subproblem becomes ill-conditioned and difficult to solve. Conversely, if $\epsilon$ is large, the subproblem becomes easier to solve but the approximation is poor. In addition, this DC approach is a conservative approximation of the original problem. Compared to the DC approach in \cite{hong2011sequential}, our DC approach has two key advantages: (i) our approach directly applies DC reformulation to the empirical quantile of the chance constraint without any approximation (see Lemma 1), thereby avoiding the suboptimality caused by the conservative approximation; (ii) our reformulation of the chance constraint does not involve the hyperparameter $\epsilon$, making it simpler to implement.
Third, a key issue in our implementation is how to choose a feasible initial point $\bx^0$. A common approach is to solve a convex approximation of Problem \eqref{P:CCP sample} such as CVaR \citep{nemirovski2007convex} to generate a feasible point. Another typical approach is to use the (exact) penalization method to compute a feasible point \citep{lu2022penalty,le2012exact}. 
Finally, the subproblem \eqref{DC:subp z} is easy to solve in some scenarios. Specifically, it is observed that the functions $c_j(\cdot,\bxi)$ for all $j=1,\dots,m$ in many practical applications take a linear form; see, e.g., \cite{luedtke2010integer,kuccukyavuz2012mixing}. Based on this observation, suppose that in \eqref{CCP:sample-DC} $\mX$ is a polyhedron and
\begin{align}\label{ex:LP}
g(\bx) = \ba_0^T\bx,\ c_j(\bx,\bxi) =  \ba_j^T\bx + \bb_j^T\bxi,\ \text{for all}\ j =1,\dots,m.
\end{align}
Then, substituting \eqref{ex:LP} into \eqref{DC:subp z} with $\beta=0$ (resp. $\beta > 0$) yields a linear (resp. quadratic) program with $(m+2)N+1$ linear constraints (without considering the linear constraints in $\mX$). We can solve it easily by inputting it into off-the-shelf linear (resp. quadratic) programming solvers, such as \textsf{MOSEK}, \textsf{Gurobi}, and \textsf{CPLEX}. In addition, suppose that in \eqref{CCP:sample-DC} $\mX$ is a polyhedron and
\begin{align}\label{ex:QP}
g(\bx) = \bx^T\bA\bx + \ba_0^T\bx,\ c_j(\bx,\bxi) =  \ba_j^T\bx + \bb_j^T\bxi,\ \text{for all}\ j =1,\dots,m,
\end{align}
where $\bA \in \R^{n\times n}$ is a symmetric matrix. The resulting subproblem \eqref{DC:subp z} is a quadratic program when $\beta \ge 0$. 

In addition, we have some remarks on the penalty parameter $\beta \ge 0$. First, the penalty parameter can be updated in an adaptive manner as long as it is non-increasing and non-negative. In our numerical experiments, we observe that an adaptive scheme may empirically accelerate the convergence of the pDCA. Second, there is a trade-off between the parameters $\rho$ and $\beta$, where $\rho$ is the coefficient of strong convexity in \Cref{AS:1}. According to \Cref{thm:subseq,thm:entire conv} in the next section, it is required that $\rho+2\beta > 0$ to guarantee subsequential and global convergence. Notably, on one hand, the assumption of strong convexity for $g$ is not required as long as $\beta > 0$. On the other hand, when $\beta=0$, \Cref{alg-1} reduces to the standard DCA method. Then, our convergence analysis applies to DCA for solving DC programming when $\rho > 0$.

\section{Convergence and Iteration Complexity Analysis}\label{sec:conv}

In this section, we study the convergence properties of Algorithm \ref{alg-1}. Towards this end, we first show the subsequential convergence of the sequence $\{\bx^k\}$ generated by Algorithm \ref{alg-1} to a KKT point of Problem \eqref{CCP:sample-DC} under a constraint qualification. Second, we prove convergence of the entire sequence $\{\bx^k\}$ if in addition the K\L\ property holds for a tailor-designed potential function. Finally, we analyze the iteration complexity of Algorithm \ref{alg-1}. We point out that the proposed algorithm and its convergence apply to Problem \eqref{CCP:sample-DC} with $G(\cdot)$ and $H(\cdot)$ being general convex functions defined on an open set that contains $\mathcal X$, which takes the form of general DC constrained DC programs. An extension to multiple DC constraints will be discussed in \Cref{subsec:ext2}.

Before we proceed, we introduce some further notation, assumptions, and definitions that will be used throughout this section. To begin, we specify the convex constraints in the set $\mX$ as follows:
\begin{align}\label{set:X}
\mX = \left\{\bx \in \R^n: \ba_i^T\bx + b_i = 0,\ i \in \mE,\ \omega_i(\bx) \le 0,\ i \in \mI \right\},
\end{align}
where $\ba_i \in \R^n$ and $b_i \in \R$ for all $i \in \mE$, $\omega_i:\R^n \rightarrow \R$ for all $i \in \mI$ are convex and continuously differentiable functions, and $\mE$ and $\mI$ are finite sets of indices. We denote the active set of the inequality constraints at $\bx \in \mX$ by
  \begin{align}\label{set:active}
\mathcal{A}(\bx) := \left\{ i \in \mI: \omega_i(\bx) = 0 \right\},
\end{align}
 and the feasible set of Problem \eqref{CCP:sample-DC} by
\begin{align*}
\bar{\mX} := \left\{\bx \in \mX: G(\bx) - H(\bx) \le 0 \right\}.
\end{align*}
We now introduce a generalized version of the Mangasarian-Fromovitz constraint qualification (MFCQ), which is a widely used assumption on the algebraic description of the feasible set of constrained problems that ensures that the KKT conditions hold at any local minimum \citep{lu2012sequential,ye1995optimality}.

\begin{assumption}[Generalized MFCQ]\label{AS:MFCQ}
The generalized MFCQ of Problem \eqref{CCP:sample-DC} holds for every $\bm x \in \bar{\mathcal{X}}$, i.e., there exists $\bm y \in \mathcal{X}$ such that 
\begin{align}
\langle \nabla \omega_i(\bm x), \bm y - \bm x \rangle < 0,\ \text{for all}\ i \in \mathcal{A}(\bm x),\label{slater:1}
\end{align}
and if $G(\bm x) = H(\bm x)$, it holds that 
\begin{align} \label{slater:2}
G(\bm y) - H(\bm x) - \inf_{\bm s_H \in \partial H(\bm x)}\langle \bm s_H, \bm y - \bm x \rangle < 0. 
\end{align}
\end{assumption}

\begin{remark}\label{rem:1}
The generalized MFCQ is equivalent to the following condition: For every $\bm x \in \bar{\mathcal{X}}$, there exists $\bd \in \R^n$ such that
\begin{equation}\label{eq:mfcqeq1}
\langle \ba_i, \bd \rangle = 0,\ \text{for all}\ i \in \mE,\quad \langle \nabla \omega_i(\bm x), \bm d \rangle < 0,\ \text{for all}\ i \in \mathcal{A}(\bm x)
\end{equation}
and if $G(\bm x) = H(\bm x)$, it holds that
\begin{equation}\label{eq:mfcqeq2}
G'(\bm x, \bd) -\inf_{\bs_H\in\partial H(\bx)} \langle \bm s_H, \bd \rangle < 0. 
\end{equation}
Please find the detailed proof of this equivalence in \Cref{app:MFCQ}.
\end{remark}


\begin{remark}
Using the equivalence in \Cref{rem:1}, we can derive the condition on $c_i(\cdot,\bm \xi)$ such that the generalized MFCQ holds. Specifically, according to  \eqref{eq:dir der}, \eqref{eq:mfcqeq2} is further equivalent to
\begin{equation}\label{eq:GH}
\sup_{\bs_G\in\partial G(\bx)} \langle \bm s_G, \bd\rangle - \inf_{\bs_H\in\partial H(\bx)} \langle \bm s_H, \bd\rangle <0.
\end{equation}
Using the form of $G$ and $H$ in \eqref{eq:G H} and \Cref{lem:subg H}, we have a complicated representation of \eqref{eq:GH} in forms of gradients of active $c_i(\cdot,\bm\xi)$, which we omitted for simplicity.
In a special case that $ \widehat{C}_{[i]}(\bx)< \widehat{C}_{[i+1]}(\bx)$ for $i=M-1,\ldots,N-1$, we obtain that 
$\partial G(\bx) = \{\sum_{j=M}^{N}\partial\widehat{C}_{i_j}(\bx)\}$
and $\partial H(\bx) =\{ \sum_{j=M+1}^{N}\partial \widehat{C}_{i_j}(\bx)\}$,
where $i_j$ is the index such that $\widehat{C}_{i_j}(\bx)$ is the $j$th smallest element of $\widehat{C}(\bx)$.
We further assume that for every $j=M,\ldots,N$, there is only one active index (say, $l_j$) in $\widehat{C}_{i_j}(\bx)$, i.e., $c_{l_j}(\bx,\hat{\bm \xi}^{i_j})=\widehat{C}_{i_j}(\bx)$ (This holds for single chance constrained programs). 
According to \Cref{lem:subg H},
 $\partial \widehat{C}_{i_j}(\bx)=\{\nabla c_{l_j}(\bx,\hat{\bm\xi}^{i_M})\}$ is a singleton for $j=M,M+1\cdots,N$, and thus \eqref{eq:GH} is equivalent to
\[
 \langle \nabla c_{l_M}(\bx,\hat{\bm\xi}^{i_M}), \bd\rangle < 0.
 \]
\end{remark}

We next introduce the definition of KKT points for Problem \eqref{CCP:sample-DC}. 
\begin{defi}[{\bf KKT Points}]\label{def:KKT}
We say that $\bx \in \bar{\mX}$ is a KKT point of Problem \eqref{CCP:sample-DC} if there exists $\lambda \in \R_+$ such that $(\bx,\lambda)$ satisfies $\lambda\left(G(\bx) - H(\bx)\right) = 0$ and
\begin{align*}
\bm{0} \in \partial g(\bx) - \partial h(\bx) + \lambda\left(\partial G(\bx) - \partial H(\bx)\right) + \mN_{\mX}(\bx).
\end{align*}
\end{defi}
Note that every local minimizer of Problem \eqref{CCP:sample-DC} is a KKT point under the generalized MFCQ. More precisely, suppose that $\bx^* \in \bar\mX$ is a local minimizer of Problem \eqref{CCP:sample-DC}, $\mP=\{\bx:\ba^T\bx+b_i=0,~i\in\mE\}$ is a polyhedron, and there exists $\bm{d} \in \mT_{\mP}(\bx^*)$  for $\bx^*  \in \mX $ such that \eqref{eq:mfcqeq1} and \eqref{eq:mfcqeq2} hold at $\bx^*$. Then, there exists $\lambda^* \in \R_+$ such that $\bx^*$ is a KKT point of Problem \eqref{CCP:sample-DC}. This result is a direct consequence of \cite[Theorem 2.1]{lu2012sequential}. 

\subsection{Subsequential Convergence to a KKT Point}\label{subsec:subseq}

In this subsection, our goal is to show that any accumulation point of the sequence $\{\bx^k\}$ generated by Algorithm \ref{alg-1} is a KKT point of Problem \eqref{CCP:sample-DC}.

\begin{lemma}\label{lem:subseq}
Suppose that \Cref{AS:1} holds, the function $f$ is given in Problem \eqref{CCP:sample-DC}, and the level set $\left\{\bx \in \bar{\mX}:\ f(\bx) \le f(\bx^0)\right\}$ is bounded. Let $\{\bx^k\}$ be the sequence generated by \Cref{alg-1} with  $\rho + 2\beta  > 0$. Then, the following statements hold: \\
(i) It holds for all $k \ge 0$ that $\bx^k \in \bar{\mX}$ and
\begin{align}\label{eq:suff decre}
f(\bx^{k+1}) - f(\bx^k) \le -\frac{\rho+2\beta}{2}\|\bx^{k+1} - \bx^k\|^2.
\end{align}
(ii) The sequence $\{\bx^k\} \subseteq \bar{\mX}$ is bounded.\\
(iii) It holds that
\begin{align}\label{rst:lem subseq}
\lim_{k \rightarrow \infty} \|\bx^{k+1} - \bx^k\| = 0.
\end{align}
\end{lemma}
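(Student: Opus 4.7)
The plan is to establish the three claims in order, because each builds on the previous one. Throughout I will denote the subproblem objective by $\phi_k(\bx) := g(\bx) - h(\bx^k) - \langle \bs_h^k, \bx - \bx^k\rangle + \frac{\beta}{2}\|\bx - \bx^k\|^2$ and its feasible set by $\mC_k := \mX \cap \{\bx : G(\bx) - H(\bx^k) - \langle \bs_H^k, \bx - \bx^k\rangle \le 0\}$. The overall strategy is completely standard for proximal DC iterations: linearization at $\bx^k$ of the concave parts produces a convex majorant that certifies feasibility of $\bx^{k+1}$ in the original problem and produces a sufficient decrease, after which boundedness and summability follow immediately.

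\textbf{For (i)}, I would prove feasibility of $\{\bx^k\}\subset\bar\mX$ by induction. The base case is the hypothesis on $\bx^0$. Assuming $\bx^k \in \bar\mX$, I first note that $\bx^k \in \mC_k$ (plugging in $\bx^k$ gives $G(\bx^k) - H(\bx^k)\le 0$), so $\mC_k\ne\emptyset$ and the subproblem has a solution. Since $\bx^{k+1}\in\mC_k\subset\mX$ and $\bs_H^k\in\partial H(\bx^k)$, convexity of $H$ yields $H(\bx^{k+1}) \ge H(\bx^k) + \langle \bs_H^k, \bx^{k+1}-\bx^k\rangle$, and subtracting this from the inequality defining $\mC_k$ gives $G(\bx^{k+1}) - H(\bx^{k+1}) \le 0$, i.e., $\bx^{k+1}\in\bar\mX$. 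For the descent inequality, I would use that $\phi_k$ is $(\rho+\beta)$-strongly convex (since $g$ is $\rho$-strongly convex and the proximal term contributes $\beta$), and that $\bx^{k+1}$ minimizes $\phi_k$ over the convex set $\mC_k$, which contains $\bx^k$. The standard strong convexity inequality at a constrained minimizer gives $\phi_k(\bx^k) - \phi_k(\bx^{k+1}) \ge \tfrac{\rho+\beta}{2}\|\bx^{k+1}-\bx^k\|^2$. Expanding $\phi_k(\bx^k)=g(\bx^k)-h(\bx^k)$ and rearranging yields $g(\bx^{k+1})-g(\bx^k)\le \langle \bs_h^k,\bx^{k+1}-\bx^k\rangle - \tfrac{\rho+\beta}{2}\|\bx^{k+1}-\bx^k\|^2 - \tfrac{\beta}{2}\|\bx^{k+1}-\bx^k\|^2$. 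Combining with the subgradient inequality $h(\bx^{k+1})-h(\bx^k)\ge \langle\bs_h^k,\bx^{k+1}-\bx^k\rangle$ gives $f(\bx^{k+1})-f(\bx^k)\le -\tfrac{\rho+\beta}{2}\|\bx^{k+1}-\bx^k\|^2$, discarding the surplus $\tfrac{\beta}{2}\|\cdot\|^2$ term to match the stated bound.

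\textbf{For (ii) and (iii)}, boundedness is immediate from (i): the sequence lies in $\bar\mX$ and, by telescoping the descent, $f(\bx^k)\le f(\bx^0)$ for every $k$, so $\{\bx^k\}$ sits inside the assumed-bounded level set $\{\bx\in\bar\mX:f(\bx)\le f(\bx^0)\}$. Because $f$ is continuous and $\{\bx^k\}$ is bounded, $f$ is bounded below along the sequence (by compactness of the closure of the iterates, or just by continuity on the closed bounded level set). Summing \eqref{eq:suff decre} from $k=0$ to $K-1$ gives $\tfrac{\rho+\beta}{2}\sum_{k=0}^{K-1}\|\bx^{k+1}-\bx^k\|^2 \le f(\bx^0) - f(\bx^K) \le f(\bx^0) - \inf_k f(\bx^k) < \infty$. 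Letting $K\to\infty$ and using $\rho+\beta>0$ yields $\sum_{k=0}^\infty\|\bx^{k+1}-\bx^k\|^2<\infty$, whence $\|\bx^{k+1}-\bx^k\|\to 0$.

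The only real subtlety I foresee is verifying that $\bx^{k+1}$ actually exists as a minimizer of $\phi_k$ over $\mC_k$, so that the strong-convexity inequality at the minimizer can be invoked. When $\beta>0$ this is automatic from coercivity of $\phi_k$ on the closed convex set $\mC_k$; when $\beta=0$ and $\rho>0$ it follows from strong convexity of $g$; the case $\rho+\beta>0$ assumed in the lemma covers both. Everything else is a mechanical combination of subgradient inequalities, linearization of the concave parts $-h$ and $-H$, and telescoping, with no qualitative surprises.
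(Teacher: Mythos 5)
Your proposal is correct and follows essentially the same route as the paper: feasibility of $\bx^{k+1}$ in $\bar{\mX}$ via the subproblem constraint plus the subgradient inequality for $H$, sufficient decrease via $(\rho+\beta)$-strong convexity of the subproblem objective at its minimizer combined with the subgradient inequality for $h$, and then (ii) and (iii) by level-set boundedness and telescoping. The only differences are cosmetic refinements on your side (explicit induction for feasibility and a remark on existence of the subproblem minimizer, which the paper leaves implicit).
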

\begin{proof}
(i) For ease of exposition, let $\mathcal{Y}_k := \left\{ \bm x \in \mathcal{X}:  G(\bx) -  H(\bx^k) - \langle \bs_H^k, \bx-\bx^k \rangle  \le 0 \right\}$ and 
\begin{align*}
    f_k(\bm x) := g(\bm x) - h(\bm x^k) - \langle \bm s_h^k, \bm x - \bm x^k \rangle + \frac{\beta}{2}\|\bm x- \bm x^k\|^2 + \delta_{\mathcal{Y}_k}(\bm x). 
\end{align*}
According to the feasibility of $\bx^{k+1}$ to Problem \eqref{DC:subproblem}, $\bs_H^k \in \partial H(\bx^k)$, and the convexity of $H$, we have $\bx^{k+1} \in \mX$ and
\begin{align}\label{eq1:lem subseq}
G(\bx^{k+1}) \le  H(\bx^k)+ \langle \bs_H^k, \bx^{k+1}-\bx^k \rangle  \le H(\bx^{k+1}).
\end{align}
This implies $\bm x^{k+1} \in \mathcal{Y}_k$ and $\bx^{k+1} \in \bar{\mX}$ for all $k \ge 0$. This further implies $\bm x^k \in \mathcal{Y}_k$. Since $g$ is $\rho$-strongly convex according to \Cref{AS:1}, we have $f_k(\bm x)$ is $(\rho+\beta)$-strongly convex. 
 This, together with $\bm 0 \in \partial f_k(\bx^{k+1})$, $\bm x^k, \bm x^{k+1} \in \mathcal{Y}_k$ and \Cref{lem:sc_lb}, directly yields
 \[
 f_k(\bx^k) \ge f_k(\bx^{k+1}) + \frac{\rho+\beta}{2}\|\bx^{k+1}-\bx^k\|^2,
 \]
 which is equivalent to
\begin{align}
g(\bx^{k+1}) - h(\bx^k) - \langle \bs_h^k, \bx^{k+1} - \bx^k \rangle + \frac{\rho+ 2 \beta}{2}\|\bx^{k+1} - \bx^k\|^2 \le  g(\bx^{k}) - h(\bx^k).
\notag 
\end{align}
This, together with the convexity of $h$ and $\bs_h^k \in \partial h(\bx^k)$, yields that for all $k \ge 0$,
\begin{align*}
g(\bx^{k+1}) - h(\bx^{k+1}) + \frac{\rho+ 2 \beta}{2}\|\bx^{k+1} - \bx^k\|^2 \le  g(\bx^{k}) - h(\bx^k),
\end{align*}
which is equivalent to \eqref{eq:suff decre}.

(ii) According to \eqref{eq:suff decre}, the function value $f(\bx^k)$ is monotonically decreasing and thus we have $f(\bx^{k+1}) \le f(\bx^0)$ for all $ k\ge 1$. This, together with the level-boundedness of the set $\left\{\bx \in \mX^c:\ f(\bx) \le f(\bx^0)\right\}$, implies that $\{\bx^k\}$ is bounded.

(iii) 
The boundedness of the sequence $\{\bx_k\}$, together with continuity of $f$,
implies that $\{f(\bx^k)\}$ is bounded from below. Using this and the fact that $\{f(\bx^k)\}$ is monotonically decreasing, we obtain that there exists some $f^*$ such that $f(\bx^k) \rightarrow f^*$. It follows from \eqref{eq:suff decre} that
\begin{align*}
\frac{\rho+2\beta}{2}\sum_{k=0}^\infty \|\bx^{k+1} - \bx^k\|^2 \le f(\bx^0) - \lim_{k \rightarrow \infty} f(\bx^{k+1}) = f(\bx^0) - f^* < \infty.
\end{align*}
This implies \eqref{rst:lem subseq}. 
\end{proof}

Armed with the above lemma, we are ready to show the subsequential convergence of the sequence $\{\bx^k\}$ generated by Algorithm \ref{alg-1} to a KKT point of Problem \eqref{CCP:sample-DC}.
\begin{thm}\label{thm:subseq}
Suppose that Assumptions \ref{AS:1} and \ref{AS:MFCQ} hold and the level set $\left\{\bx \in \bar{\mX}:\ f(\bx) \le f(\bx^0)\right\}$ is bounded. Let $\{\bx^k\}$ be the sequence generated by Algorithm \ref{alg-1}  with $\rho+ 2 \beta > 0$. Then, any accumulation point of $\{\bx^k\}$ is a KKT point of Problem \eqref{CCP:sample-DC}.
\end{thm}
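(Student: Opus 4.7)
The plan is to pass to the limit in the KKT conditions of the convex subproblem \eqref{DC:subproblem} along a subsequence $\bx^{k_j}\to\bx^*$. By Lemma \ref{lem:subseq}(iii) we also have $\bx^{k_j+1}\to\bx^*$, and closedness of $\bar{\mX}$ (from continuity of $G$ and $H$) together with Lemma \ref{lem:subseq}(i) gives $\bx^*\in\bar{\mX}$. Because \eqref{DC:subproblem} is a convex program and Assumption \ref{AS:MFCQ} supplies a Slater point for the linearized DC constraint at each iteration (either $\bx^k$ itself when $G(\bx^k)<H(\bx^k)$, or the MFCQ witness at $\bx^k$ when $G(\bx^k)=H(\bx^k)$), standard convex KKT theory provides $\lambda_k\ge 0$, $\bu^k\in\mN_{\mX}(\bx^{k+1})$, and selections $\bs_g^{k+1}\in\partial g(\bx^{k+1})$, $\bs_G^{k+1}\in\partial G(\bx^{k+1})$ satisfying
\begin{align*}
\bm{0}&=\bs_g^{k+1}-\bs_h^k+\beta(\bx^{k+1}-\bx^k)+\lambda_k(\bs_G^{k+1}-\bs_H^k)+\bu^k,\\
0&=\lambda_k\bigl[G(\bx^{k+1})-H(\bx^k)-\langle \bs_H^k,\bx^{k+1}-\bx^k\rangle\bigr].
\end{align*}
Local boundedness of the subdifferentials of continuous convex functions on the bounded set containing $\{\bx^k\}$ lets me pass to a further subsequence so that $\bs_h^{k_j}\to\bs_h^*$, $\bs_H^{k_j}\to\bs_H^*$, $\bs_g^{k_j+1}\to\bs_g^*$, and $\bs_G^{k_j+1}\to\bs_G^*$; outer semicontinuity of the subdifferential of a continuous convex function then places each limit in the corresponding subdifferential at $\bx^*$.

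The main obstacle will be showing that $\{\lambda_{k_j}\}$ is bounded, which I would argue in two cases. If $G(\bx^*)<H(\bx^*)$, continuity of $G,H$ together with boundedness of $\{\bs_H^{k_j}\}$ forces the bracket in the complementarity relation to be strictly negative for all large $j$, so $\lambda_{k_j}=0$. If $G(\bx^*)=H(\bx^*)$, I would invoke Assumption \ref{AS:MFCQ} at $\bx^*$, select the witness $\by\in\mX$, and take the inner product of the first KKT identity with $\by-\bx^{k+1}$. The normal-cone term contributes $\langle\bu^k,\by-\bx^{k+1}\rangle\le 0$, and by convexity of $G$ and $H$,
\begin{align*}
\langle\bs_G^{k+1}-\bs_H^k,\by-\bx^{k+1}\rangle\le G(\by)-G(\bx^{k+1})+H(\bx^k)-H(\by)+\langle\bs_H^k,\bx^{k+1}-\bx^k\rangle,
\end{align*}
whose limit along the subsequence is $G(\by)-H(\by)$. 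Chaining \eqref{slater:1} with the convex-subgradient inequality $\langle\bs_H^*,\by-\bx^*\rangle\le H(\by)-H(\bx^*)$ yields $G(\by)<H(\by)$, so there is a uniform gap $\varepsilon>0$ with $\langle\bs_G^{k_j+1}-\bs_H^{k_j},\by-\bx^{k_j+1}\rangle\le -\varepsilon$ for all large $j$. Isolating $\lambda_{k_j}$ in the inner-product identity then yields $\lambda_{k_j}\le C/\varepsilon$ for a constant $C$ depending only on the bounded remaining terms.

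Once $\{\lambda_{k_j}\}$ is bounded, the first KKT identity forces $\{\bu^{k_j}\}$ to be bounded as well; passing to a last subsequence gives $\lambda_{k_j}\to\lambda^*\ge 0$ and $\bu^{k_j}\to\bu^*\in\mN_{\mX}(\bx^*)$ via outer semicontinuity of the normal cone of a closed convex set. Since $\beta(\bx^{k_j+1}-\bx^{k_j})\to 0$ by Lemma \ref{lem:subseq}(iii) and $\langle\bs_H^{k_j},\bx^{k_j+1}-\bx^{k_j}\rangle\to 0$, taking limits in the two displayed relations recovers
\begin{align*}
\bm{0}&\in\partial g(\bx^*)-\partial h(\bx^*)+\lambda^*(\partial G(\bx^*)-\partial H(\bx^*))+\mN_{\mX}(\bx^*),\\
0&=\lambda^*(G(\bx^*)-H(\bx^*)),
\end{align*}
which are precisely the KKT conditions of Definition \ref{def:KKT}, completing the argument.
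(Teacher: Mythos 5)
Your overall architecture (pass to the limit in the subproblem KKT systems, with a direct bound on the multipliers via the MFCQ witness) is viable, but one step at the heart of your multiplier-boundedness argument is false, precisely in the critical case $G(\bx^*)=H(\bx^*)$. Your displayed ``convexity'' bound
\begin{align*}
\langle\bs_G^{k+1}-\bs_H^k,\by-\bx^{k+1}\rangle\le G(\by)-G(\bx^{k+1})+H(\bx^k)-H(\by)+\langle\bs_H^k,\bx^{k+1}-\bx^k\rangle
\end{align*}
does not hold. Convexity of $G$ does give $\langle\bs_G^{k+1},\by-\bx^{k+1}\rangle\le G(\by)-G(\bx^{k+1})$, but for the other term the subgradient inequality $H(\by)\ge H(\bx^k)+\langle\bs_H^k,\by-\bx^k\rangle$ yields
\begin{align*}
-\langle\bs_H^k,\by-\bx^{k+1}\rangle\ \ge\ H(\bx^k)-H(\by)+\langle\bs_H^k,\bx^{k+1}-\bx^k\rangle,
\end{align*}
which is a \emph{lower} bound, not the upper bound you need; your inequality would require the reverse (concavity-type) inequality for $H$. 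For instance, with $H(x)=x^2$, $\bx^k=\bx^{k+1}=0$, $\bs_H^k=0\in\partial H(0)$, $\by=1$, the $H$-part of your bound reads $0\le H(0)-H(1)=-1$. Consequently, the fact $G(\by)<H(\by)$ --- which you do correctly derive from \eqref{slater:1} --- does not produce the uniform gap $\langle\bs_G^{k_j+1}-\bs_H^{k_j},\by-\bx^{k_j+1}\rangle\le-\varepsilon$: what your convexity estimates actually give converges to a quantity bounded \emph{below} by $G(\by)-H(\by)$, which says nothing about the sign of the inner product itself.

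The repair is short and coincides with what the paper does in \eqref{eq9:lem-decrease}: do not route through $H(\by)$ at all. Along your subsequence every term converges, so $\langle\bs_G^{k_j+1}-\bs_H^{k_j},\by-\bx^{k_j+1}\rangle\to\langle\bs_G^*-\bs_H^*,\by-\bx^*\rangle$ with $\bs_G^*\in\partial G(\bx^*)$ and $\bs_H^*\in\partial H(\bx^*)$, and then convexity of $G$ together with \eqref{slater:1} applied at $\bx^*$ with the subgradient $\bs_H^*$ gives
\begin{align*}
\langle\bs_G^*-\bs_H^*,\by-\bx^*\rangle\le G(\by)-G(\bx^*)-\langle\bs_H^*,\by-\bx^*\rangle=G(\by)-H(\bx^*)-\langle\bs_H^*,\by-\bx^*\rangle<0,
\end{align*}
using $G(\bx^*)=H(\bx^*)$; this yields the uniform gap for large $j$. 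With that one step replaced, the rest of your argument (inner product with $\by-\bx^{k+1}$, the normal-cone term $\langle\bu^k,\by-\bx^{k+1}\rangle\le 0$, bounded remaining terms, hence $\lambda_{k_j}\le C/\varepsilon$) goes through, and it is in fact more direct than the paper's proof of the same claim, which normalizes all multipliers, argues by contradiction through a case analysis, and needs \eqref{slater:2}, the linear independence of the equality gradients, and the explicit gradient description of $\mN_{\mX}$; your use of the defining property of the normal cone of the convex set $\mX$ avoids all of that.
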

\begin{proof}
According to (i) in Lemma \ref{lem:subseq}, it holds that $\bx^k \in \bar{\mX}$ for all $k \ge 0$. Using the generalized MFCQ in Assumption \ref{AS:MFCQ}, there exists $\bm x \in \mathcal{X}$ such that 
\begin{align}
    & \langle \nabla \omega_i(\bm x^k), \bm x - \bm x^k \rangle < 0,\ \forall i \in \mathcal{A}(\bm x^k), \quad\text{ and }\label{eq1:thm subseq}\\
    & G(\bx) - H(\bx^k) - \langle \bs_H^k,\bx-\bx^k \rangle < 0,\ \text{if}\ G(\bm x^k) = H(\bm x^k). \label{eq2:thm subseq}
\end{align}
According to \eqref{set:active}, we have $\omega_i(\bm x^k) = 0$ for all $i \in \mathcal{A}(\bm x^k)$. Let $\bm x_{\alpha} := \alpha \bm x + (1-\alpha)\bm x^k$, where $\alpha \in (0,1]$. Obviously, we have $ \langle \nabla \omega_i(\bm x^k), \bm x_\alpha - \bm x^k \rangle = \alpha  \langle \nabla \omega_i(\bm x^k), \bm x - \bm x^k \rangle < 0$ for all $i \in \mathcal{A}(\bm x^k)$. Since $\omega_i(\bm x)$ is continuously differentiable, its Taylor expansion at $\bm x^k$ is as follows:
\begin{align*}
    \omega_i\left( \bm x_\alpha \right) =   \omega_i(\bm x^k) + \langle \nabla  \omega_i(\bm x^k), \bm x_{\alpha} - \bm x^k \rangle + o\left( \|\bm x_{\alpha} - \bm x^k\| \right).
\end{align*}
This, together with \eqref{eq1:thm subseq}, $\omega_i(\bm x^k) = 0$ for all $i \in \mathcal{A}(\bm x^k)$, and $\|\bm x_{\alpha} - \bm x^k \|= \alpha\| \bm x-\bm x^k\|$, yields $\omega_i\left( \bm x_\alpha \right) < 0$ for all $i \in \mathcal{A}(\bm x^k)$ when $\alpha\to 0$. Moreover, according to \eqref{eq2:thm subseq}, if $G(\bm x^k) = H(\bm x^k)$, we have 
\begin{align*} 
    G(\bx_\alpha) - H(\bx^k) - \langle \bs_H^k,\bx_\alpha - \bx^k \rangle & =  G(\alpha \bm x + (1-\alpha)\bm x^k)  - H(\bx^k) - \alpha \langle \bs_H^k, \bm x - \bx^k \rangle \\
    & \le \alpha \left( G(\bm x) - H(\bm x^k) - \langle \bs_H^k, \bm x - \bx^k \rangle \right)  + (1-\alpha) \left( G(\bm x^k) - H(\bm x^k)\right) \\
    & = \alpha \left( G(\bm x) - H(\bm x^k) - \langle \bs_H^k, \bm x - \bx^k \rangle \right)  < 0,
\end{align*}
where the first inequality uses the convexity of $G$, the second equality follows from $G(\bm x^k) = H(\bm x^k)$, and the last inequality is due to \eqref{eq2:thm subseq}. Using the similar argument, when $\alpha > 0$ is sufficiently small, it follows from $\omega_i(\bm x^k) < 0$ for all $i \in \mathcal{I} \setminus \mathcal{A}(\bm x^k)$ due to \eqref{set:active} and  $G(\bm x^k)-H(\bm x^k) < 0$ that  
\begin{align*} 
\omega_i(\bx_\alpha) < 0,\ \forall i \in \mathcal{I} \setminus \mA(\bx^k),\ G(\bx_\alpha) - H(\bx^k) - \langle \bs_H^k,\bx_\alpha-\bx^k \rangle <0,\ \text{if}\ G(\bm x^k) < H(\bm x^k).
\end{align*}
Therefore, we obtain that there exists $\by \in {\mX}$ such that for any $\bs_H^k \in \partial H(\bx^k)$, 
\begin{align}\label{CQ:Slater}
\omega_i(\by) < 0,\ \forall i \in \mathcal{I},\ G(\by) - H(\bx^k) - \langle \bs_H^k,\by-\bx^k \rangle <0.
\end{align}
This is exactly the Slater condition for Problem \eqref{DC:subproblem}. Consequently, according to \cite[Theorem 28.2]{rockafellar1970convex}, there exists a Lagrange multiplier $\lambda^{k} \in \R$ associated with the constraint $G(\bx^{k+1}) - H(\bx^k) - \langle \bs_H^k,\bx^{k+1}-\bx^k\rangle \le 0$ such that the following KKT system holds:
\begin{align}\label{eq2:lem-decrease}
\begin{cases}&G(\bx^{k+1}) - H(\bx^k) - \langle \bs_H^k,\bx^{k+1}-\bx^k\rangle \le 0,\ \\
&\lambda^k \left(G(\bx^{k+1}) - H(\bx^k) - \langle \bs_H^k,\bx^{k+1}-\bx^k\rangle\right) = 0,\\
& \bm{0} \in \partial g(\bx^{k+1}) - \bs_h^k + \beta(\bx^{k+1}-\bx^k) + \lambda^k \left(\partial G(\bx^{k+1}) - \bs_H^k\right) + \mN_{\mX}(\bx^{k+1}),\\
&\bx^{k+1} \in \mX, \lambda^k \ge 0.\end{cases}
\end{align}
It follows from (ii) of Lemma \ref{lem:subseq} that $\{\bx^k\}$ is bounded. Let $\bx^*$ be an accumulation point of $\{\bx^k\}$ such that there exists a subsequence $\{\bx^{k_i}\}$ with $\lim_{i \rightarrow \infty} \bx^{k_i} = \bx^*$. We claim that the sequence $\{\lambda^k\}$ is bounded. Passing to a further subsequence if necessary, we assume without loss of generality that $\lim_{i\rightarrow \infty} \lambda^{k_i} = \lambda^*$. According to \eqref{rst:lem subseq} in Lemma \ref{lem:subseq}, we have $\lim_{i\rightarrow \infty} (\bx^{k_i+1}-\bx^{k_i}) = \bm{0}$. Using this fact, the outer semi-continuity of $\partial g$, $\partial h$, $\partial G$, $\partial H$, and the normal cones of convex closed sets (see \Cref{def:osc} and \Cref{lem:osc}), and $\bs_h^k \in \partial h(\bx^k),\ \bs_H^k \in \partial H(\bx^k)$, we obtain upon passing to the limit as $i$ goes to infinity in \eqref{eq2:lem-decrease} with  $k=k_i$ that
$\bs_h^k\to\bs_h^*\in\partial h(\bx^*)$ and $\bs_H^k\to\bs_H^*\in\partial H(\bx^*)$, and thus
\begin{align}\label{eq4:lem-decrease}
\bm{0} \in \partial g(\bx^*) - \partial h(\bx^*) + \lambda^* \left(\partial G(\bx^*) - \partial H(\bx^*)\right) + \mN_{\mX}(\bx^{*}).
\end{align}
On the other hand, using \eqref{eq2:lem-decrease} and \eqref{rst:lem subseq} with $k=k_i$ and the boundedness of $\partial H(\bx^*)$, letting $i\rightarrow \infty$, we have
\begin{align}
& G(\bx^*) \le H(\bx^*),\ \lambda^*\left( G(\bx^{*}) - H(\bx^*) \right) = 0.\label{eq5:lem-decrease}
\end{align}
Moreover, since $\lambda^k \ge 0$ and $\bx^k \in \bar{\mX}$ for all $k \ge 0$, we have $\lambda^* \ge 0$ and $\bx^* \in \bar{\mX}$. This, together with \eqref{eq4:lem-decrease}, \eqref{eq5:lem-decrease}, and \Cref{def:KKT}, implies that $\bx^*$ is a KKT point of Problem \eqref{CCP:sample-DC}.

The rest of the proof is devoted to proving that $\{\lambda^k\}$ is bounded. Without loss of generality, we assume that $\{\ba_i: i\in \mE\}$ is linearly independent, since otherwise, we can obtain the same results by eliminating the redundant linear equalities. It follows from Lemma \ref{lem:normal cone} that  for any $\bx \in \mX$, 
\begin{align*}
\mN_{\mX}(\bx) = \left\{ \sum_{i\in \mE}u_i\ba_i + \sum_{i\in \mI}v_i\nabla \omega_i(\bx): \ v_i \ge 0,\ \text{for}\ i \in \mA(\bx),\ v_i = 0,\ \text{for}\ i \in \mathcal{I} \setminus \mathcal{A}(\bm x)  \right\}.
\end{align*}
This, together with \eqref{eq2:lem-decrease}, yields that there exist $u_i^{k}$ for $i \in \mE$, $v_i^{k} \ge 0$ for $i \in \mA(\bx^{k+1})$, and $v_i^{k} = 0$ for $i \in \mathcal{I} \setminus \mA(\bx^{k+1})$ such that
\begin{align}\label{eq7:lem-decrease}
\bm{0} \in \ \partial g(\bx^{k+1}) - \bs_h^{k} + \beta(\bx^{k+1}-\bx^{k}) + \lambda^{k} \left(\partial G(\bx^{k+1}) - \bs_H^{k} \right) + \sum_{i\in \mE}u_i^{k}\ba_i + \sum_{i\in \mI}v_i^{k} \nabla \omega_i(\bx^{k+1}).
\end{align}
Then, let
\begin{align*}
\rho^k := \sqrt{(\lambda^k)^2 + \sum_{i\in \mE} (u_i^k)^2 + \sum_{i \in \mI} (v_i^k)^2}, \tau^k := \frac{\lambda^{k}}{\rho^k},\ \mu_i^k := \frac{u_i^{k}}{\rho^k},\ \nu_i^k := \frac{v_i^{k}}{\rho^k}.
\end{align*}
Suppose to the contrary that $\{\lambda^k\}$ is unbounded. This implies that ${\rho^k}$ is also unbounded.
Then, there exists a subsequence $\{\lambda^{k_j}\}$ such that $|\lambda^{k_j}| \rightarrow \infty$ as $j$ goes to infinity. Passing to a further subsequence if necessary, suppose that there exist $\tau^* \in \R_+$, $\mu_i^* \in \R$, $\nu_i^* \in \R_+$, $\bx^*$, and $\bs_H^* \in \partial H(\bx^*)$ such that $\lim_{j\rightarrow \infty} \tau^{k_j}  = \tau^*$, $\lim_{j\rightarrow \infty} \mu_i^{k_j}  = \mu_i^*$, $\lim_{j\rightarrow \infty} \nu_i^{k_j}  = \nu_i^*$, $\lim_{j\rightarrow \infty} \bx^{k_j} = \bx^*$, and $\lim_{j\rightarrow \infty} \bs_H^{k_j} = \bs_H^*$, where $\bs_H^{k_j} \in \partial H(\bx^{k_j})$, due to $\lambda^{k} \ge 0$, $v_i^* \ge 0$ for $i \in \mI$, the boundedness of $\{\tau^k\}$, $\{\bmu^k\}$, $\{\bnu^k\}$, $\{\bx^k\}$, and $\partial H(\bx^k)$, and the outer semi-continuity of $\partial H$.
Then, dividing both sides of \eqref{eq7:lem-decrease} by $|\rho^{k_j}|$, letting $j \rightarrow \infty$, and using \eqref{rst:lem subseq}, the outer semi-continuity of $\partial g$ and $\partial h$, and the boundedness of $\partial g(\bx^{*})$, $\partial h(\bx^{*})$, and $\{\bx^k\}$, we have
\begin{align}\label{eq6:lem-decrease}
\bm{0} \in \tau^{*} \left(\partial G(\bx^{*}) - \bs_H^{*} \right) + \sum_{i\in \mE} \mu_i^*\ba_i + \sum_{i\in \mI} \nu_i^*\nabla \omega_i(\bx^*).
\end{align}
Using the definitions of $\tau^*,\bmu^*$, and $\bnu^*$, we further have
\begin{align}\label{eq:sumone}
(\tau^*)^2+\|\bmu^*\|^2+\|\bnu^*\|^2=1,
\end{align}
({\bf Case 1}) Suppose that $\tau^* = 0$. Due to \eqref{eq6:lem-decrease}, we have
\begin{align}\label{eq8:lem-decrease}
\bm{0} = \sum_{i\in \mE} \mu_i^*\ba_i + \sum_{i\in \mI} \nu_i^*\nabla \omega_i(\bx^*).
\end{align}
According to \eqref{slater:1} in Assumption \ref{AS:MFCQ}, there exists $\by \in \mX$ such that $\langle \nabla \omega_i(\bx^*),\by-\bx^*\rangle < 0$ for all $i \in \mA(\bx^*)$. Moreover, since $\mA(\bx^k) \subseteq \mA(\bx^*)$ when $k$ is sufficiently large, we have $i \notin \mA(\bx^k)$ if $i \notin \mA(\bx^*)$. Therefore, we have $\nu_i^k = 0$ for all $i \notin \mA(\bx^{k+1})$ as $k \rightarrow \infty$, which implies $\nu_i^* = 0$ for $i \notin \mA(\bx^*)$.
Then, taking inner products with $\by-\bx^*$ on both sides of \eqref{eq8:lem-decrease} yields
\begin{align*}
0 = \sum_{ i \in \mA(\bx^*)} \nu_i^* \langle \nabla \omega_i(\bx^*), \by-\bx^* \rangle,
\end{align*}
where the equality follows from $\langle \ba_i,\by-\bx^* \rangle = 0$ for $i \in \mE$ and $\nu_i^* = 0$ for $i \notin \mA(\bx^*)$. This, together with $\langle \nabla \omega_i(\bx^*),\by-\bx^*\rangle < 0$ for all $i \in \mA(\bx^*)$, gives $\nu_i^* = 0$ for all $i \in \mA(\bx^*)$. Substituting this and $\nu_i^* = 0$ for $i \notin \mA(\bx^*)$ into \eqref{eq8:lem-decrease}, we have $\bm{0} = \sum_{i\in \mE} \mu_i^*\ba_i$. Noting that we assume that $\{\ba_i: i\in \mE\}$ is linearly independent, we have $\mu_i^*=0$ for all $i \in \mE$. Therefore, $\nu_i^*=0$ for all $i \in \mI$ and $\mu_i^*=0$ for all $i \in \mE$. This contradicts \eqref{eq:sumone}. \\
({\bf Case 2}) Suppose that $\tau^* > 0$. We first consider the case of $G(\bx^*) < H(\bx^*)$. It follows from the second line of \eqref{eq2:lem-decrease} with $k=k_j$, $j\rightarrow\infty$, and \eqref{rst:lem subseq} that $\lim_{j\rightarrow \infty} \lambda^{k_j} = 0$. This implies $\tau^*=0$, which contradicts $\tau^* > 0$. We then must have $G(\bx^*)=H(\bx^*)$. This, together with the convexity of $G$ and \eqref{slater:2} in Assumption \ref{AS:MFCQ},  yields that there exists $\by \in \mX$ such that
\begin{align} \label{eq9:lem-decrease}
\langle \bar\bs_G - \bs_H^{*}, \by-\bx^* \rangle & \le G(\by) - G(\bx^*) - \langle \bs_H^{*}, \by-\bx^* \rangle \notag\\
& = G(\by) - H(\bx^*) - \langle \bs_H^*,\by-\bx^* \rangle < 0,
\end{align}
where $\bar\bs_G$ is an arbitrary subgradient of $G$ at $\bx^*$.
According to \eqref{eq6:lem-decrease}, there exists $\bs_G^* \in \partial G(\bx^*)$ such that
\begin{align}
\bm{0} = \tau^{*} \left(\bs_G^* - \bs_H^{*} \right) + \sum_{i\in \mE} \mu_i^*\ba_i + \sum_{i\in \mI} \nu_i^*\nabla \omega_i(\bx^*).
\end{align}
Taking inner products with $\by-\bx^*$ on both sides yields
\begin{align*}
0 = \tau^* \langle \bs_G^* - \bs_H^{*}, \by-\bx^* \rangle + \sum_{i\in \mA(\bx^*)} \nu_i^*\langle \nabla \omega_i(\bx^*), \by-\bx^* \rangle.
\end{align*}
Note that $\nu_i^* \ge 0$ due to $v_i^k \ge 0$ for all $i \in \mI$. This, together with \eqref{slater:2} at $\bx^*$ and \eqref{eq9:lem-decrease}, implies $\tau^*=0$, which is a contradiction. We prove the claim.
\end{proof}
 
\subsection{Convergence of the Entire Sequence to a KKT Point}

In this subsection, we employ the analytical framework proposed in \cite{attouch2009convergence,attouch2013convergence} based on the K\L\ property to study the sequential convergence of Algorithm \ref{alg-1} for $\beta + 2\rho > 0$. Our first step is to show that the sequence generated by Algorithm \ref{alg-1} satisfies \emph{sufficient decrease} and \emph{relative error} conditions with respect to a potential function. 
Motivated by the potential functions constructed in \cite{liu2019refined,yu2021convergence}, we construct the following potential function
\begin{align}\label{poten:phi}
\varphi(\bx,\by,\bz) := g(\bx) - \langle \bx, \by \rangle + h^*(\by) + \delta_{\bar{F}(\cdot) \le 0}(\bx,\bz) + \delta_{\mX}(\bx),
\end{align}
where
\begin{align}\label{poten:F}
\bar{F}(\bx,\bz) := G(\bx) - \langle \bx,\bz \rangle + H^*(\bz).
\end{align}
Then, we characterize the subdifferential of $\delta_{\bar{F}(\cdot) \le 0}(\bx,\bz)$ using its structure and the convexity of $G$ and $H$. Notably, this characterization holds for $G$ and $H$ being arbitrary proper closed convex functions.
\begin{lemma}\label{lem:subg F}
Suppose that \Cref{AS:MFCQ} holds and $(\bx,\bz)$ satisfies $\bar{F}(\bx,\bz) \le 0$ and $\bm x \in \mathcal{X}$. It holds that 
\begin{align}\label{subg:F}
    \widehat{\partial}  \delta_{\bar{F}(\cdot) \le 0}(\bx,\bz) \supseteq  \left\{\begin{bmatrix}
\lambda(\partial G(\bx) - \bz)  \\
 \lambda(-\bx + \partial H^*(\bz))
\end{bmatrix}: \lambda \ge 0,\ \lambda\bar{F}(\bx,\bz)=0 \right\}.
\end{align}
\end{lemma}
\begin{proof}
To begin, let
\begin{align*}
\mS:=\left\{(\bx,\bz): \bar{F}(\bx,\bz) \le 0\right\}.
\end{align*}
In addition, we write $\mS=\bar{F}^{-1}(\R_-)$.
Because $G$ and $H^*$ are both convex functions, then $\bar F$ is locally Lipschitz continuous.
This, together with \Cref{def:cts} and \Cref{lem:cts}, implies that $\bar{F}: \R^n\times \R^n \rightarrow \R$ is a strictly continuous function. Using this and \Cref{lem:chain}, we obtain 
\begin{align*}
    \widehat{\mathcal{N}}_{\cal S}(\bm x, \bm z) \supseteq \left\{\widehat{\partial} (\lambda\bar{F})(\bm x,\bm z): \lambda \in \widehat{\mathcal{N}}_{\R_{-}}(\bar{F}(\bm x, \bm z)) \right\}.  
\end{align*}
Since $\mN_{\R_-}(\bar{F}(\bx,\bz)) = \widehat{\mathcal{N}}_{\R_{-}}(\bar{F}(\bm x, \bm z))$ due to the convexity of $\R_{-}$, then $\lambda \in  \widehat{\mN}_{\R_-}(\bar{F}(\bx,\bz))$ is equivalent to $\lambda \ge 0,\ \lambda\bar{F}(\bx,\bz)=0$. According to (ii) and (iii) of \Cref{lem:rule sub}, we obtain 
\begin{align*}
    \widehat{\partial} (\lambda\bar{F})(\bm x,\bm z)  \supseteq \lambda\begin{bmatrix}
 \widehat{\partial} G(\bx)  \\
 \widehat{\partial} H^*(\bz)
\end{bmatrix} + \lambda \begin{bmatrix}
    -\bm z\\ -\bm x
\end{bmatrix} = \begin{bmatrix}
\lambda(\partial G(\bx) - \bz)  \\
 \lambda(-\bx + \partial H^*(\bz))
\end{bmatrix},
\end{align*}
where the equality follows from the convexity of $G(\cdot)$ and $H(\cdot)$. These, together with $\widehat{\cal N}_{\cal S}(\bm x, \bm z) = \widehat{\partial} \delta_{\bar F(\cdot) \le 0}(\bm x, \bm z)$, yield \eqref{subg:F}.  
\end{proof}
 
Now, we are ready to show that the sequence $\{(\bx^k,\bs_h^k,\bs_H^k)\}$ generated by Algorithm \ref{alg-1} satisfies the sufficient decrease and relative error conditions mentioned earlier. 

\begin{lemma}\label{lem:suff-safe}
Suppose that Assumptions \ref{AS:1} and \ref{AS:MFCQ} hold. Let $\{(\bx^{k+1},\bs^k_h,\bs_H^{k})\}$ be the sequence generated by Algorithm \ref{alg-1}  with $\rho+2\beta > 0$. Then, the following statements hold:\\
(i) \emph{[Sufficient Decrease]} The sequence $\{(\bx^{k+1},\bs^k_h,\bs_H^{k})\}$ is bounded. It holds for all $k \ge 1$ that
\begin{align*}
\varphi(\bx^{k+1},\bs^k_h,\bs_H^{k}) - \varphi(\bx^{k},\bs^{k-1}_h,\bs_H^{k-1}) \le -\frac{\rho+2\beta}{2}\|\bx^{k+1} - \bx^k\|^2.
\end{align*}
(ii) \emph{[Relative Error]} There exists a constant $\kappa > 0$ such that for all $k \ge 0$,
\begin{align*}
\dist\left(\bm{0},\partial \varphi(\bx^{k+1},\bs_h^k,\bs_H^k)\right) \le \kappa \|\bx^{k+1} - \bx^k\|.
\end{align*}
\end{lemma}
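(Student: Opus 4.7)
The proof has two parts and I would tackle them independently, relying on the characterization of $\partial\delta_{\bar F(\cdot)\le 0}$ in Lemma \ref{lem:subg F} and Fenchel's equality.

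\textbf{Plan for (i) Sufficient Decrease.} The first step is to verify that $(\bx^{k+1},\bs_h^k,\bs_H^k)$ lies in $\dom\varphi$. Feasibility $\bx^{k+1}\in\mX$ comes from \eqref{DC:subproblem}. Since $\bs_H^k\in\partial H(\bx^k)$, Fenchel's equality gives $H^*(\bs_H^k)=\langle\bx^k,\bs_H^k\rangle-H(\bx^k)$, so
\[
\bar F(\bx^{k+1},\bs_H^k)=G(\bx^{k+1})-H(\bx^k)-\langle \bs_H^k,\bx^{k+1}-\bx^k\rangle\le 0,
\]
exactly \eqref{eq1:lem subseq}. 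Using the analogous Fenchel equality for $h$ at $\bx^k$, the potential collapses to
\[
\varphi(\bx^{k+1},\bs_h^k,\bs_H^k)=g(\bx^{k+1})-h(\bx^k)-\langle \bs_h^k,\bx^{k+1}-\bx^k\rangle,
\]
which is the objective of the DC subproblem at $\bx^{k+1}$. The strong-convexity comparison already employed in Lemma \ref{lem:subseq}(i) then yields $\varphi(\bx^{k+1},\bs_h^k,\bs_H^k)\le f(\bx^k)-\tfrac{\rho+\beta}{2}\|\bx^{k+1}-\bx^k\|^2$. The same simplification at the previous iterate gives $\varphi(\bx^k,\bs_h^{k-1},\bs_H^{k-1})=g(\bx^k)-h(\bx^{k-1})-\langle\bs_h^{k-1},\bx^k-\bx^{k-1}\rangle$, which by convexity of $h$ is $\ge f(\bx^k)$. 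Chaining the two inequalities yields the claimed descent. Boundedness of the triple sequence follows from Lemma \ref{lem:subseq}(ii) together with local boundedness of the subdifferentials of the continuous convex functions $h$ and $H$ on the bounded set containing $\{\bx^k\}$.

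\textbf{Plan for (ii) Relative Error.} I would compute $\partial\varphi(\bx^{k+1},\bs_h^k,\bs_H^k)$ coordinate-wise using the sum rule, the Fenchel equalities $\bx^k\in\partial h^*(\bs_h^k)$ and $\bx^k\in\partial H^*(\bs_H^k)$, and Lemma \ref{lem:subg F} to describe $\partial\delta_{\bar F(\cdot)\le 0}$. For the $\by$-block: $-\bx^{k+1}+\bx^k\in -\bx^{k+1}+\partial h^*(\bs_h^k)\subseteq\partial_\by\varphi$, of norm $\|\bx^{k+1}-\bx^k\|$. For the $\bz$-block: choosing $\bs_2=\bx^k\in\partial H^*(\bs_H^k)$ and taking the Lagrange multiplier $\lambda^k$ from the KKT system \eqref{eq2:lem-decrease} (whose complementarity $\lambda^k\bar F(\bx^{k+1},\bs_H^k)=0$ was verified above), Lemma \ref{lem:subg F} gives $\lambda^k(\bx^k-\bx^{k+1})\in\partial_\bz\varphi$, of norm $\lambda^k\|\bx^{k+1}-\bx^k\|$. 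For the $\bx$-block: the third line of \eqref{eq2:lem-decrease} supplies $\bm v\in\partial g(\bx^{k+1})$, $\bm u\in\partial G(\bx^{k+1})$, $\bm n\in\mN_\mX(\bx^{k+1})$ with
\[
\bm v-\bs_h^k+\lambda^k(\bm u-\bs_H^k)+\bm n=-\beta(\bx^{k+1}-\bx^k),
\]
and the left-hand side is exactly a vector in $\partial_\bx\varphi(\bx^{k+1},\bs_h^k,\bs_H^k)$ when paired with the same $\lambda^k$. Thus one has an element of $\partial\varphi$ whose norm is bounded by $\sqrt{\beta^2+1+(\lambda^k)^2}\,\|\bx^{k+1}-\bx^k\|$.

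\textbf{Main obstacle.} Turning the above bound into a uniform $\kappa$ requires $\sup_k\lambda^k<\infty$. I expect this to be the delicate point, but it has already been proved in Theorem \ref{thm:subseq} via a contradiction argument based on the generalized MFCQ (Assumption \ref{AS:MFCQ}) and the linear independence trick applied to $\{\bm a_i\}_{i\in\mE}$; I would simply cite that argument. Taking $\kappa:=\sqrt{\beta^2+1+\sup_k(\lambda^k)^2}$ completes the relative-error estimate. A minor technical point to mention is that the application of Lemma \ref{lem:subg F} needs Assumption \ref{AS:MFCQ} to be inherited at $(\bx^{k+1},\bs_H^k)$ rather than at a limit point; the same Slater/MFCQ equivalence used in the proof of Theorem \ref{thm:subseq} handles this along the iterates.
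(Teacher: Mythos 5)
Your proposal is correct and follows essentially the same route as the paper's proof: part (i) via the Fenchel--Young equalities for $h$ and $H$, the strong-convexity comparison from the subproblem, and convexity of $h$ to link consecutive potential values; part (ii) via the block-wise description of $\partial\varphi$ from Lemma \ref{lem:subg F}, the subproblem KKT system \eqref{eq2:lem-decrease}, and the boundedness of $\{\lambda^k\}$ established in the proof of Theorem \ref{thm:subseq}. The only differences are cosmetic (you collapse $\varphi$ by substituting Fenchel's equality immediately, and your constant $\sqrt{\beta^2+1+(\lambda^k)^2}$ replaces the paper's $\beta+1+\lambda^k$), so no further comparison is needed.
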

\begin{proof}
(i) It follows from (i) in Lemma \ref{lem:subseq} that $\{\bx^k\} \subseteq \bar{\mX}$ is bounded. This, together with the fact that $h$ and $H$ are convex, implies that $\{(\bs^k_h,\bs_H^{k})\}$ is bounded. Therefore, the sequence $\{(\bx^{k+1},\bs^k_h,\bs_H^{k})\}$ is bounded. According to \eqref{poten:F}, we have for all $k \ge 0$,
\begin{align}\label{eq1:lem suff safe}
\begin{aligned}
\bar{F}(\bx^{k+1},\bs_H^k) & = G(\bx^{k+1}) - \langle \bx^{k+1},\bs_H^k \rangle + H^*(\bs_H^k) \\
& = G(\bx^{k+1}) + H^*(\bs_H^k) - \langle \bx^k,\bs_H^k \rangle - \langle \bx^{k+1} - \bx^k, \bs_H^k \rangle \\
& = G(\bx^{k+1}) - H(\bx^k) - \langle \bx^{k+1} - \bx^k, \bs_H^k \rangle \le 0,
\end{aligned}
\end{align}
where the last equality follows from $H(\bx^k) + H^*(\bs_H^k) = \langle\bx^k, \bs_H^k \rangle$ due to Young's inequality and $\bs_H^k \in \partial H(\bx^k)$, and the inequality is due to the constraint in \eqref{DC:subproblem}.
Moreover, it follows from \eqref{DC:subproblem}, the $(\rho+\beta)$-strongly convexity of $g(x)- \langle \bs_h^k, \bx - \bx^k \rangle + {\beta}\|\bx - \bx^k\|^2/2$, and Lemma \ref{lem:sc_lb} that for all $ k \ge 0$,
\begin{align}\label{eq2:lem suff safe}
g(\bx^{k+1}) - \langle \bs_h^k, \bx^{k+1} - \bx^k \rangle + \frac{\rho+2\beta}{2}\|\bx^{k+1} - \bx^k\|^2 \le g(\bx^k).
\end{align}
This, together with \eqref{eq1:lem suff safe} and $\bx^k \in \mX$, implies for all $ k \ge 1$,
\begin{align*}
\varphi(\bx^{k+1},\bs_h^k,\bs_H^k) & = g(\bx^{k+1}) - \langle \bx^{k+1}, \bs_h^k \rangle + h^*(\bs_h^k)  \\
& \le g(\bx^k) - \langle \bs_h^k, \bx^k \rangle - \frac{\rho+2\beta}{2}\|\bx^{k+1} - \bx^k\|^2  + h^*(\bs_h^k)  \\
& = g(\bx^k) - h(\bx^k)  -  \frac{\rho+2\beta}{2}\|\bx^{k+1} - \bx^k\|^2 \\
& \le g(\bx^k) - \langle \bx^k, \bs_h^{k-1} \rangle + h^*(\bs_h^{k-1}) - \frac{\rho+2\beta}{2}\|\bx^{k+1} - \bx^k\|^2 \\
& = \varphi(\bx^{k},\bs_h^{k-1},\bs_H^{k-1}) - \frac{\rho+2\beta}{2}\|\bx^{k+1} - \bx^k\|^2,
\end{align*}
where the first inequality uses \eqref{eq2:lem suff safe}, the second equality follows from $h(\bx^k)+h^*(\bs_h^k)=\langle \bx^k,\bs_h^k \rangle$ due to $\bs_h^k \in \partial h(\bx^k)$ and Young's inequality, the second inequality follows from $h(\bx^k)+h^*(\bs_h^{k-1}) \ge \langle \bx^k,\bs_h^{k-1} \rangle$ due to Young's inequality, and the last equality is due to $\bx^k \in \mX$, \eqref{poten:phi}, and \eqref{eq1:lem suff safe}. \\
(ii) To begin, we compute 
\begin{align}\label{eq5:lem suff safe}
\partial \varphi(\bx,\by,\bz) & \supseteq \widehat{\partial} \varphi(\bx,\by,\bz) \supseteq \begin{bmatrix}
\widehat{\partial}g(\bx) - \by + \widehat{\partial} \delta_{\mX}(\bx) \\
-\bx + \widehat{\partial} h^*(\by)  \\
\bm{0} 
\end{bmatrix} + \mathcal{B} = \begin{bmatrix}
\partial g(\bx) - \by + \mN_{\mX}(\bx) \\
-\bx + \partial h^*(\by) \\
\bm{0} 
\end{bmatrix} + \mathcal{B},
\end{align}
where the first inclusion follows from (i) of \Cref{lem:rule sub}, the second inclusion uses (ii), (iii), and (iv) of \Cref{lem:rule sub} and $\mathcal{B}: = \{(\bm x, \bm y, \bm z)\in \R^n\times \R^n\times \R^n: (\bm x, \bm z) \in \widehat{\partial}\delta_{\bar{F}(\cdot) \le 0}(\bx,\bz), \bm y = \bm 0\}$, and the equality is due to the convexity of $g$, $h^*$, and $\mX$ and the fact that $\widehat{\partial} f(\bm x) = \partial f(\bm x)$ for any proper and convex function $f$ and $\bm x \in \mathrm{dom}(f)$. According to \Cref{lem:subg F}, we obtain 
\begin{align*}
\widehat{\partial}\delta_{\bar{F}(\cdot) \le 0}(\bx,\bz) \supseteq \left\{  
\begin{bmatrix}
\lambda(\partial G(\bx) - \bz)  \\
 \lambda(\partial H^*(\bz)-\bx)
\end{bmatrix}: \lambda \ge 0,\ \lambda\bar{F}(\bm x, \bm z) = 0 \right\}
\end{align*}
This, together with \eqref{eq5:lem suff safe}, implies 
\begin{equation}\label{eq3:lem suff safe}
\partial \varphi(\bx^{k+1},\bs_h^k,\bs_H^k) \supseteq \left\{\begin{bmatrix}
\partial g(\bx^{k+1}) - \bs_h^k + \mN_{\mX}(\bx^{k+1}) + \lambda(\partial G(\bx^{k+1}) - \bs_H^k )\\
-\bx^{k+1} + \partial h^*(\bs_h^k)\\
\lambda(\partial H^*(\bs_H^k)-\bx^{k+1})
\end{bmatrix}:  \lambda \ge 0,\ \lambda\bar{F}(\bm x^{k+1}, \bm s_{H}^k) = 0\right\}. 
\end{equation}
It follows from \Cref{AS:MFCQ} that the KKT system \eqref{eq2:lem-decrease} holds for Problem \eqref{DC:subproblem}. Then we have $\lambda^k\ge 0$ and
\begin{equation}\label{eq4:lem suff safe}
\begin{aligned}
\lambda^k\bar{F}(\bx^{k+1},\bs_H^k) & = \lambda^k\left(G(\bx^{k+1}) - \langle \bx^{k+1}, \bs_H^k\rangle + H^*(\bs_H^k)\right)  \\
& = \lambda^k\left(G(\bx^{k+1}) - H(\bx^k) - \langle \bx^{k+1} - \bx^{k}, \bs_H^k\rangle\right) = 0,
\end{aligned}
\end{equation}
where the first equality uses \eqref{poten:F}, the second equality follows from $H(\bx^k)+H^*(\bs_H^k)=\langle \bx^k,\bs_H^k \rangle$ due to $\bs_H^k \in \partial H(\bx^k)$ and Young's inequality, and the last equality is due to the second line in \eqref{eq2:lem-decrease}. It follows from the last line in \eqref{eq2:lem-decrease} that
\begin{align*}
\beta(\bx^k - \bx^{k+1}) \in \partial g(\bx^{k+1}) - \bs_h^k + \lambda^k \left(\partial G(\bx^{k+1}) - \bs_H^k\right) + \mN_{\mX}(\bx^{k+1}).
\end{align*}
This, together with \eqref{eq1:lem suff safe}, \eqref{eq3:lem suff safe}, \eqref{eq4:lem suff safe} with $\lambda^k \ge 0$, $\bs_h^k \in \partial h(\bx^k)$, $\bs_H^k \in \partial H(\bx^k)$, and the fact that $\by \in \partial \psi(\bx)$ if and only if $\bx \in \partial \psi^*(\by)$ provided that $\psi$ is a proper closed convex function, yields that
\begin{align*}
\left(\beta(\bx^k - \bx^{k+1}),\ \bx^k - \bx^{k+1},\ \lambda^k(\bx^k - \bx^{k+1})\right) \in \partial \varphi(\bx^{k+1},\bs_h^k,\bs_H^k).
\end{align*}
This implies
\begin{align*}
\dist\left(\bm{0},\partial \varphi(\bx^{k+1},\bs_h^k,\bs_H^k)\right) \le (\beta + 1 +\lambda^k) \|\bx^{k+1} - \bx^k\|,
\end{align*}
where $\lambda^k \ge 0$ is bounded in \eqref{eq2:lem-decrease} according to the proof of \Cref{thm:subseq}.
 \end{proof}

To apply the K\L\ property to conduct convergence analysis, we require that the function $\varphi$ is a K\L\ function. According to \cite[Theorem 3 \& Example 2]{bolte2014proximal} and \cite[Section 4.3]{attouch2010proximal}, if $\varphi$ is proper, lower semicontinuous, and semialgebraic (see \Cref{def:semi}), then $\varphi$ satisfies the K\L\ property on $\mathrm{dom}(\varphi)$. According to \Cref{AS:1} and \Cref{exam:semi} in \Cref{app:D}, the following conditions suffice to guarantee $\varphi$ to be a K\L\ function: The functions $g,h$ are semialgebraic, $c_i(\bm x, \bm \xi)$ for all $i \in \{1,\dots,m\}$  semialgebraic in $\bm x$ for every $\bm \xi \in \Xi$, and $\omega_i$ for all $i \in \mathcal{I}$ are semialgebraic. Using Lemma \ref{lem:suff-safe} and the analysis in \cite{attouch2009convergence,attouch2010proximal,attouch2013convergence,bolte2014proximal,liu2019refined,yu2021convergence}, one can prove the following result on the sequential convergence and the convergence rate of the sequence $\{\bx^k\}$ generated by Algorithm \ref{alg-1}. The proof is rather standard and thus we omit it. We refer the reader to \cite{attouch2009convergence,liu2019refined} for the detailed arguments.

\begin{thm}\label{thm:entire conv}
Let the function $f$ be defined in \Cref{AS:1}. Suppose that the level set $\left\{\bx \in \mX^c:\ f(\bx) \le f(\bx^0)\right\}$ is bounded, $\varphi$ in \eqref{poten:phi} is a K\L\ function with exponent $\theta \in [0,1)$, and Assumptions \ref{AS:1} and \ref{AS:MFCQ} hold. Then, the sequence $\{\bx^k\}$ generated by Algorithm \ref{alg-1} with $\rho + 2\beta > 0$ converges to a KKT point $\bx^*$ of Problem \eqref{CCP:sample-DC}. There exists an integer $k^* \ge 1$ such that the following statements hold: \\
(i) If $\theta=0$, then $\{\bx^k\}$ converges finitely, i.e., $\bx^k=\bx^*$ for all $k \ge k^*$. \\
(ii) If $\theta \in (0,1/2]$, then $\{\bx^k\}$ converges linearly, i.e., there exist $c>0$ and $q \in (0,1)$ such that for all $k \ge k^*$,
\begin{align*}
\|\bx^k - \bx^*\| \le cq^k.
\end{align*}
(iii) If $\theta \in (1/2,1)$, then $\{\bx^k\}$ converges sublinearly, i.e.,  there exist $c>0$ such that for all $k \ge k^*$,
\begin{align*}
\|\bx^k - \bx^*\| \le ck^{-\frac{1-\theta}{2\theta-1}}.
\end{align*}
\end{thm}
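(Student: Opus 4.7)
The plan is to follow the standard recipe for K\L-based convergence analysis pioneered in \cite{attouch2009convergence,attouch2013convergence} and adapted to the DC setting in \cite{liu2019refined,yu2021convergence}, plugging in the two structural ingredients we have already proved in Lemma \ref{lem:suff-safe}, namely the sufficient decrease property and the relative error bound for the potential function $\varphi$ defined in \eqref{poten:phi}. The first step is to deduce from sufficient decrease that the sequence $\{\varphi(\bx^{k+1},\bs_h^k,\bs_H^k)\}$ is monotonically non-increasing and, by the boundedness of $\{(\bx^{k+1},\bs_h^k,\bs_H^k)\}$ together with continuity and closedness of the ingredients of $\varphi$, bounded below. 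Hence $\{\varphi(\bx^{k+1},\bs_h^k,\bs_H^k)\}$ converges to some $\varphi^*$, and telescoping sufficient decrease yields $\sum_k \|\bx^{k+1}-\bx^k\|^2<\infty$, which in particular recovers \eqref{rst:lem subseq}.

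Next, I would identify the set of accumulation points of $\{(\bx^{k+1},\bs_h^k,\bs_H^k)\}$. Using outer semi-continuity of $\partial h$ and $\partial H$ together with Theorem \ref{thm:subseq}, every such accumulation point $(\bx^*,\bs_h^*,\bs_H^*)$ has $\bx^*$ a KKT point of Problem \eqref{CCP:sample-DC} and satisfies $\bs_h^*\in\partial h(\bx^*)$, $\bs_H^*\in\partial H(\bx^*)$. A standard continuity argument, using $h(\bx^k)+h^*(\bs_h^k)=\langle\bx^k,\bs_h^k\rangle$ and the analogous identity for $H$, shows that $\varphi$ takes the constant value $\varphi^*$ on this accumulation set. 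This puts us exactly in the setting of the \emph{uniformized K\L\ property} (see \cite[Lemma 6]{bolte2014proximal}): there exist $\eta>0$, a neighborhood $U$ of the accumulation set, and a desingularizing function $\psi$ of the K\L\ exponent $\theta$ such that the K\L\ inequality \eqref{eq:KL ineq} holds uniformly on $U$ with $\bar\bx$ replaced by $\varphi^*$.

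The core step is then to combine sufficient decrease, the relative error bound from Lemma \ref{lem:suff-safe}(ii), and the uniform K\L\ inequality to derive a summability estimate. Writing $\Delta_k:=\varphi(\bx^{k+1},\bs_h^k,\bs_H^k)-\varphi^*$, the concavity of $\psi$ gives
\[
\psi(\Delta_k)-\psi(\Delta_{k+1}) \;\ge\; \psi'(\Delta_k)(\Delta_k-\Delta_{k+1})
\;\ge\; \frac{\rho+\beta}{2}\cdot\frac{\|\bx^{k+1}-\bx^k\|^2}{\kappa\|\bx^{k+1}-\bx^k\|},
\]
after using the K\L\ inequality to lower bound $\psi'(\Delta_k)$ by $1/\mathrm{dist}(0,\partial\varphi(\bx^{k+1},\bs_h^k,\bs_H^k))$ and then applying Lemma \ref{lem:suff-safe}(ii). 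A Young's-inequality step converts this into $\|\bx^{k+1}-\bx^k\|\le c_1\sqrt{\|\bx^k-\bx^{k-1}\|\,(\psi(\Delta_{k})-\psi(\Delta_{k+1}))}$ and hence $\sum_k\|\bx^{k+1}-\bx^k\|<\infty$. This gives the Cauchy property of $\{\bx^k\}$, so $\bx^k\to\bx^*$ for a unique limit $\bx^*$, which by Theorem \ref{thm:subseq} is a KKT point.

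For the convergence rates in (i)--(iii), I would appeal to the classical rate lemma (see, e.g., \cite[Theorem 2]{attouch2009convergence} or \cite[Theorem 3.4]{liu2019refined}), which converts the K\L\ exponent $\theta$ of the desingularizing function $\psi(s)=cs^{1-\theta}$ into finite/linear/sublinear rates on the tail sum $\sum_{j\ge k}\|\bx^{j+1}-\bx^j\|$ by solving a recursion of the form $r_k\le r_{k-1}-\mu r_k^{2\theta/(1-\theta)}$, where $r_k$ bounds the tail sum. Since $\|\bx^k-\bx^*\|\le\sum_{j\ge k}\|\bx^{j+1}-\bx^j\|$, the rates in (i)--(iii) follow directly. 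The main obstacle in carrying out this plan in full detail is the bookkeeping for the uniform K\L\ inequality across the whole accumulation set of $(\bx^{k+1},\bs_h^k,\bs_H^k)$, rather than at a single point; once that is in place, the rest is a verbatim transcription of the standard argument, which is why we content ourselves with citing \cite{attouch2009convergence,liu2019refined}.
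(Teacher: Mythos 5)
Your proposal follows essentially the same route as the paper: the paper omits this proof entirely, invoking Lemma \ref{lem:suff-safe} (sufficient decrease and relative error for $\varphi$) together with the standard K\L\ convergence framework of \cite{attouch2009convergence,liu2019refined,bolte2014proximal}, which is precisely what you reconstruct (uniformized K\L\ property, summability of the steps, Cauchy property, convergence to a KKT point via Theorem \ref{thm:subseq}, and the rate recursion for $\theta$). The only blemish is an index slip in your displayed inequality --- the numerator should be $\|\bx^{k+2}-\bx^{k+1}\|^2$ (the step controlled by sufficient decrease applied at iteration $k+1$), while the denominator $\|\bx^{k+1}-\bx^k\|$ comes from the relative-error bound at $(\bx^{k+1},\bs_h^k,\bs_H^k)$ --- but your subsequent Young's-inequality line restores the correct current-step-versus-previous-step structure, so the argument goes through as in the cited references.
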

It follows from Theorem \ref{thm:entire conv} that the proximal DC algorithm achieves linear convergence when the K\L\ exponent $\theta=1/2$. Therefore, an interesting future direction is to investigate under what conditions the K\L\ exponent of Problem \eqref{CCP:sample-DC} is $1/2$; see, e.g., \cite{li2018calculus,jiang2019novel,jiang2022holderian,liu2019quadratic,wang2021linear,zheng2022linearly}.

\subsection{Iteration Complexity for Computing an Approximate KKT Point}

In this subsection, we analyze the iteration complexity of Algorithm \ref{alg-1} for computing an approximate KKT point of Problem \eqref{CCP:sample-DC}. Motivated by the analysis framework in \cite{yurtsever2022cccp} for DC constrained DC programs with all functions being differentiable, we connect Algorithm \ref{alg-1} to a variant of the Frank-Wolfe (FW) method. To simplify notation, let
\[
\bw:=(\bx,s,t),\quad q(\bw):=s-h(\bx),\quad Q(\bw):=t-H(\bx),\] and
\[\mW:=\left\{\bw: \bx \in \mX,\ g(\bx)\le s,\ G(\bx) \le t \right\}.\]
In particular, we should mention that $q$ and $Q$ are both concave functions and $\mW$ is a convex set.
We rewrite Problem \eqref{CCP:sample-DC} as follows by introducing auxiliary variables $s,t \in \R$:
\begin{align}
\begin{aligned}\label{P:FW DC}
\min_{\bx \in \mX, s \in \R, t \in \R} &  s - h(\bx) \\
\st\quad ~~&\ g(\bx) \le s,\ G(\bx) \le t,\ t - H(\bx) \le 0.
\end{aligned}
\end{align}
We further express Problem \eqref{P:FW DC} as
\begin{align}\label{P:FW}
\min_{\bm{w} \in \mW}\ q(\bw)\quad \st\ Q(\bw) \le 0,
\end{align}
Based on the above setup, together with defining $\|\bz\|_T=\sqrt{\sum_{i=1}^{n}z_i^2}$ for any $\bz\in\R^{n+2}$, we directly show the equivalence between the proximal DC iterations in \eqref{DC:subproblem} and a variant of FW iterations applied to Problem \eqref{P:FW}.
\begin{lemma}\label{lem:equi DC FW}
The proximal DC iterations in \eqref{DC:subproblem} with $\beta  \ge 0$ is equivalent to the following variant of FW iterations:
\begin{align}
\begin{aligned}\label{subP:FW}
\bw^{k+1} \in  \argmin_{\bw \in \mW}~& q(\bw^k) + \langle \bs_q^k,\bw-\bw^k \rangle +  \frac{\beta}{2}\|\bw-\bw^k \|_T^2 \\
\rm s.t.\quad& Q(\bw^k) + \langle \bs_Q^k,\bw-\bw^k \rangle \le 0,
\end{aligned}
\end{align}
where $\bs_q^k=(-\bs_h^k, 1,0)$, $\bs_h^k\in \partial h(\bx^k)$, $\bs_Q^k=(-\bs_H^k,0,1)$, $\bs_H^k\in \partial H(\bx^k)$.
\end{lemma}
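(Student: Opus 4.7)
The plan is to verify the equivalence by direct substitution, unpacking the abstract form of the FW subproblem into the concrete variables $(\bx,s,t)$ and then eliminating the auxiliary scalars $s$ and $t$.

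First, I would write out $\bw^k=(\bx^k,s^k,t^k)$ and $\bw=(\bx,s,t)$, and substitute $\bs_q^k=(-\bs_h^k,1,0)$, $\bs_Q^k=(-\bs_H^k,0,1)$ into the objective and constraint of \eqref{subP:FW}. Using $q(\bw^k)=s^k-h(\bx^k)$, the inner product $\langle \bs_q^k,\bw-\bw^k\rangle=-\langle \bs_h^k,\bx-\bx^k\rangle+(s-s^k)$, and the definition $\|\bw-\bw^k\|_T^2=\|\bx-\bx^k\|^2$, the FW objective simplifies to
\[
s \;-\; h(\bx^k)\;-\;\langle \bs_h^k,\bx-\bx^k\rangle\;+\;\frac{\beta}{2}\|\bx-\bx^k\|^2.
\]
A parallel computation for the constraint, using $Q(\bw^k)=t^k-H(\bx^k)$ and $\langle \bs_Q^k,\bw-\bw^k\rangle=-\langle \bs_H^k,\bx-\bx^k\rangle+(t-t^k)$, reduces the linearized constraint to
\[
t\;\le\; H(\bx^k)+\langle \bs_H^k,\bx-\bx^k\rangle.
\]

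Next, I would eliminate the auxiliary variables $s$ and $t$ using the definition of $\mW$. The variable $s$ appears in the FW objective only through the linear term $s$ with a positive coefficient and is constrained by $g(\bx)\le s$; therefore at optimality $s=g(\bx)$, which replaces $s$ in the objective by $g(\bx)$. The variable $t$ does not appear in the objective at all, and its only constraints are $G(\bx)\le t$ (from $\mW$) and the linearized inequality above; such a $t$ exists if and only if
\[
G(\bx)\;\le\; H(\bx^k)+\langle \bs_H^k,\bx-\bx^k\rangle.
\]
After these eliminations, the FW subproblem becomes exactly the proximal DC subproblem \eqref{DC:subproblem}, so any $(\bx^{k+1},s^{k+1},t^{k+1})$ solving \eqref{subP:FW} yields an $\bx^{k+1}$ solving \eqref{DC:subproblem} (with $s^{k+1}=g(\bx^{k+1})$ and any feasible $t^{k+1}\in[G(\bx^{k+1}),H(\bx^k)+\langle \bs_H^k,\bx^{k+1}-\bx^k\rangle]$), and conversely any $\bx^{k+1}$ solving \eqref{DC:subproblem} extends to a solution of \eqref{subP:FW}.

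I do not anticipate a real obstacle here: the lemma is essentially a bookkeeping statement, and the only mildly delicate point is justifying that the epigraphical slack variables $s,t$ can be eliminated without changing the optimal set, which follows from monotonicity of the FW objective in $s$ and the absence of $t$ from the objective. It is worth noting in the write-up that this requires the lifted feasible set $\mW$ to be nonempty and that Assumption \ref{AS:MFCQ} together with Assumption \ref{AS:1} ensures the subproblems \eqref{DC:subproblem} and \eqref{subP:FW} admit minimizers (cf.\ the Slater-type condition \eqref{CQ:Slater} derived in the proof of Theorem \ref{thm:subseq}), so that the equivalence of optimal values translates into equivalence of optimal solutions in the sense needed for the subsequent iteration-complexity analysis.
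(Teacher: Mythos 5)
Your proposal is correct and takes essentially the same approach as the paper: the paper's own (one-line) proof invokes exactly the same facts, namely direct substitution of the definitions of $\mW$, $q$, and $Q$, together with the observation that any optimal solution must satisfy $s = g(\bx)$, while the variable $t$ is eliminated against the linearized constraint. Your write-up merely supplies the bookkeeping that the paper leaves implicit.
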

\begin{proof}
Using the definitions of  $\mW$, $q(\bw)$, $Q(\bw)$, we obtain that \eqref{subP:FW} is equivalent to 
\begin{align}
\begin{aligned}\label{subP:FW2}
\bw^{k+1} \in  \argmin_{\bw \in \mW}~& s_k-h(\bx^k)-\langle \bs_h^k, \bx-\bx^k\rangle+s-s^k + \frac{\beta}{2}\|\bm x - \bm x^k\|^2 \\
\rm s.t.\quad& t_k-H(\bx^k)-\langle \bs_H^k, \bx-\bx^k\rangle+t-t^k\le 0.
\end{aligned}
\end{align}
This is equivalent to \eqref{DC:subproblem} as there exists an optimal solution of \eqref{subP:FW2} satisfying $s=g(\bx)$ and $t=G(\bx)$.
\end{proof}

We next use the equivalent expression \eqref{P:FW} to give an equivalent characterization of KKT points (see Definition \ref{def:KKT}) of Problem \eqref{CCP:sample-DC} under the generalized MFCQ in Assumption \ref{AS:MFCQ}.
\begin{lemma}
Suppose that Assumptions \ref{AS:1} and \ref{AS:MFCQ} hold.  
Given $\bar{\bw} \in \mW$, $\bs_q \in \partial q(\bar{\bw})$ with $g(\bar\bx) \le \bar s,\ G(\bar\bx) \le\bar t$, and $\bs_Q \in \partial Q(\bar{\bw})$, if
\begin{align}\label{KKT2:P FW}
\langle \bs_q,  \bw - \bar{\bw}\rangle  +  \frac{\beta}{2}\|\bw-\bar\bw \|_T^2\ge 0
\end{align}
for all $\bw \in \mW$ satisfying $Q(\bar{\bw}) + \langle \bs_Q,\bw-\bar{\bw} \rangle \le 0$, then $\bar{\bx}$ is a KKT point of Problem \eqref{CCP:sample-DC}.
\end{lemma}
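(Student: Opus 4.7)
The plan is to read the hypothesis as a first-order optimality condition for a convex program and then translate its KKT conditions back to the original variables. Specifically, the displayed inequality says that $\bar\bw$ globally minimizes
\begin{align*}
\min_{\bw \in \mW}\ \langle \bs_q, \bw - \bar\bw \rangle + \frac{\beta}{2}\|\bw - \bar\bw\|_T^2 \quad \text{s.t.}\quad Q(\bar\bw) + \langle \bs_Q, \bw - \bar\bw \rangle \le 0,
\end{align*}
which is a convex program (convex quadratic objective; $\mW$ and the linearized constraint are both convex). Taking $\bw = \bar\bw$ forces $Q(\bar\bw) \le 0$, so $\bar\bw$ is itself feasible. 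By the same Slater-type argument used to derive \eqref{CQ:Slater} (invoking Assumption \ref{AS:MFCQ} and the equivalence between MFCQ and Slater for convex problems from \cite[Exercise 2.3.3(b)]{cui2021modern}), a multiplier $\lambda \ge 0$ exists with $\lambda Q(\bar\bw) = 0$ and
\begin{align*}
\bm 0 \in \bs_q + \lambda \bs_Q + \mN_{\mW}(\bar\bw).
\end{align*}

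Next I would compute $\mN_{\mW}(\bar\bw)$ explicitly. Writing $\mW$ as the intersection of $\mX \times \R^2$ with the two convex sets $\{(\bx,s,t) : g(\bx) - s \le 0\}$ and $\{(\bx,s,t) : G(\bx) - t \le 0\}$, and using the subgradient formulas $\partial(g(\bx) - s) = \partial g(\bx) \times \{-1\} \times \{0\}$ and $\partial(G(\bx) - t) = \partial G(\bx) \times \{0\} \times \{-1\}$ together with the sum rule for normal cones of convex sets, I obtain
\begin{align*}
\mN_{\mW}(\bar\bw) = \bigl\{(\bv + \mu_1 \bs_g + \mu_2 \bs_G,\ -\mu_1,\ -\mu_2) : \bv \in \mN_{\mX}(\bar\bx),\ \bs_g \in \partial g(\bar\bx),\ \bs_G \in \partial G(\bar\bx)\bigr\},
\end{align*}
over $\mu_1, \mu_2 \ge 0$ with the complementary slackness $\mu_1(g(\bar\bx) - \bar s) = 0$ and $\mu_2(G(\bar\bx) - \bar t) = 0$.

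Finally, substituting $\bs_q = (-\bs_h, 1, 0)$ and $\bs_Q = (-\bs_H, 0, 1)$ into the stationarity inclusion and matching blocks, the second block gives $\mu_1 = 1$ (so $g(\bar\bx) = \bar s$); the third gives $\mu_2 = \lambda$ (so $\lambda(G(\bar\bx) - \bar t) = 0$); and the first yields
\begin{align*}
\bm 0 \in \partial g(\bar\bx) - \bs_h + \lambda\bigl(\partial G(\bar\bx) - \bs_H\bigr) + \mN_{\mX}(\bar\bx) \subseteq \partial g(\bar\bx) - \partial h(\bar\bx) + \lambda\bigl(\partial G(\bar\bx) - \partial H(\bar\bx)\bigr) + \mN_{\mX}(\bar\bx).
\end{align*}
Combining $\lambda(G(\bar\bx) - \bar t) = 0$ with $\lambda Q(\bar\bw) = \lambda(\bar t - H(\bar\bx)) = 0$ gives $\lambda(G(\bar\bx) - H(\bar\bx)) = 0$, and the chain $G(\bar\bx) \le \bar t \le H(\bar\bx)$ shows $\bar\bx$ is feasible for Problem \eqref{CCP:sample-DC}. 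Together these are exactly Definition \ref{def:KKT}. The main obstacle is rigorously verifying the Slater condition for the subproblem in the boundary case $Q(\bar\bw) = 0$ with $G(\bar\bx) = H(\bar\bx)$, which is precisely where Assumption \ref{AS:MFCQ} must be exploited to produce a strictly feasible direction for all active constraints simultaneously.
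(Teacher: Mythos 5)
Your proof is correct, and it takes a genuinely different route from the paper's in its middle portion. Both arguments begin identically, by reading \eqref{KKT2:P FW} as saying that $\bar\bw$ globally solves the linearized convex program over $\mW$. From there the paper \emph{eliminates} the auxiliary variables: using $\bs_q=(-\bs_h,1,0)$ and the observation that minimizers satisfy $s=g(\bx)$ and $\bar s=g(\bar\bx)$, it rewrites the lifted objective as the pDCA subproblem objective (its identity \eqref{eq:termtol}), concludes that $\bar\bx$ solves the subproblem \eqref{DC:subproblem} centered at $\bar\bx$ itself, and then invokes Slater's condition (via Assumption \ref{AS:MFCQ}) on that $\bx$-space problem to obtain Definition \ref{def:KKT}. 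You instead \emph{stay in the lifted space}: you write KKT conditions for the lifted program, compute $\mN_{\mW}(\bar\bw)$ explicitly via the normal cones of the level sets $\{g(\bx)\le s\}$ and $\{G(\bx)\le t\}$, and match blocks. What the paper's route buys is brevity and reuse of structure already analyzed (the subproblem \eqref{DC:subproblem} and its KKT system); what yours buys is that the multipliers become fully explicit --- $\mu_1=1$ recovers $\bar s=g(\bar\bx)$ rather than asserting it, $\mu_2=\lambda$ yields the complementary slackness $\lambda\left(G(\bar\bx)-H(\bar\bx)\right)=0$ directly, and the feasibility chain $G(\bar\bx)\le\bar t\le H(\bar\bx)$ is spelled out, none of which the paper's proof makes explicit. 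Two caveats. First, your claim that ``taking $\bw=\bar\bw$ forces $Q(\bar\bw)\le 0$'' is not a valid deduction: the hypothesis is an implication quantified over feasible $\bw$, and it says nothing about $\bar\bw$ unless $\bar\bw$ already lies in the linearized feasible set; $Q(\bar\bw)\le 0$ is really an implicit standing assumption (the paper makes the same implicit assumption when it calls $\bar\bw$ ``optimal,'' and it is harmless since the algorithm's iterates are feasible). Second, your normal-cone decomposition of $\mN_{\mW}$ needs its own constraint qualification (e.g., a common strictly feasible point of the two level sets intersected with ${\rm ri}(\mX)\times\R^2$, which exists here); this is at the same level of rigor as the paper's appeal to Slater, so it is a matter of bookkeeping rather than a gap.
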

\begin{proof}
According to the statement of the lemma, we obtain that $\bar{\bw} \in \mW$ is an optimal solution to the following convex problem:
\begin{align*} 
\min_{\bw \in \mW}&\quad \langle \bs_q,\bw - \bar{\bw} \rangle +  \frac{\beta}{2}\|\bw-\bar\bw \|_T^2\\
\st &\quad Q(\bar{\bw}) + \langle \bs_Q,\bw-\bar{\bw} \rangle \le 0.
\end{align*}
Acccording to \Cref{lem:equi DC FW} with $\bw^k=\bar\bw$ and $\bx^k=\bar\bx$, the above problem is equivalent to
\begin{align*}
\min_{\bx \in \mX}\quad & g(\bm x) - h(\bar \bx)- \langle \bs_h, \bx-\bar{\bx} \rangle
+ \frac{\beta}{2}\|\bx-\bar\bx \|^2  \\
\st\quad &  G(\bm x) - H(\bar{\bx}) - \langle \bs_H, \bx-\bar{\bx} \rangle  \le 0,
\end{align*}
where $\bs_h \in \partial h(\bar{\bx})$ and $\bs_H \in \partial H(\bar{\bx})$. 

Therefore, we obtain that $\bar{\bx}$ is an optimal solution to the above convex problem.
This, together with the Slater's condition due to Assumption  \ref{AS:MFCQ}, implies that there exists $\lambda \in \R_+$ such that $(\bar{\bx},\lambda)$ satisfies 
\[
\begin{array}{ll}
\lambda\left(G(\bar\bx) - H(\bar\bx)\right) = 0,\ \bm{0} \in \partial g(\bar\bx) - \partial h(\bar\bx) + \lambda\left(\partial G(\bar\bx) - \partial H(\bar\bx)\right) + \mN_{\mX}(\bar\bx),
\end{array}
\]
which is just the KKT system of  Problem \eqref{CCP:sample-DC} in Definition \ref{def:KKT}. 
\end{proof}

Consequently, studying the iteration complexity of Algorithm \ref{alg-1} for computing an approximate KKT point of Problem \eqref{CCP:sample-DC} is equivalent to that of the variant of the FW iterations \eqref{subP:FW} for computing a point satisfying \eqref{KKT2:P FW}. 
However, we cannot expect to achieve a solution that satisfies \eqref{KKT2:P FW} in practice. Instead, we often obtain an approximate solution as shown in the next theorem, which can be seen as an approximation of a KKT point of Problem \eqref{CCP:sample-DC}.
The next theorem gives the iteration complexity for achieving an approximate solution.
\begin{thm}\label{thm:rate FW}
Suppose that Assumptions \ref{AS:1} and \ref{AS:MFCQ} hold.
Let $\{\bx^k\}$ be the sequence generated by Algorithm \ref{alg-1}. Then, there exists $\ell \in  \{1,\dots,k\}$  such that
\begin{align}\label{KKT2:P FWeps}
\langle \bs_q,  \bw -{\bw^\ell}\rangle  +  \frac{\beta}{2}\|\bw-\bw^\ell \|_T^2\ge -\frac{1}{k}\left(q(\bw^0) - q^*\right),
\end{align}
for all $\bw\in\mW$ and $Q(\bw^l)+\langle \bs^l_Q,\bw-\bw^l\rangle \le 0$, where $q^* \in \R$ is the optimal value of Problem \eqref{P:FW} and  $\bs^l_Q\in\partial Q(\bw^l)$.
\end{thm}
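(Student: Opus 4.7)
The plan is to invoke Lemma \ref{lem:equi DC FW} to view Algorithm \ref{alg-1} as the Frank--Wolfe-type scheme \eqref{subP:FW} applied to the reformulation \eqref{P:FW}, and then run a telescoping ``FW-gap'' argument driven by the concavity of $q$ and the decrease produced by the subproblem. The crucial structural facts that make this work are: (i) $q(\bw)=s-h(\bx)$ is concave with $\bs_q^\ell=(-\bs_h^\ell,1,0)$ a supergradient of $q$ at $\bw^\ell$; (ii) at the iterate $\bw^\ell$ one has $t^\ell=g(\bx^\ell)$ forced as noted in the proof of Lemma \ref{lem:equi DC FW}, and $\bx^\ell\in\bar{\mathcal X}$ by Lemma \ref{lem:subseq}(i), which together give $Q(\bw^\ell)=t^\ell-H(\bx^\ell)=G(\bx^\ell)-H(\bx^\ell)\le 0$; in particular $\bw^\ell$ itself is feasible for its own linearized subproblem.

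For each $\ell$, I would introduce the FW-type gap
\[
G_\ell \;:=\; -\min_{\bw\in\mathcal W,\ Q(\bw^\ell)+\langle \bs_Q^\ell,\bw-\bw^\ell\rangle\le 0}\Bigl\{\langle \bs_q^\ell,\bw-\bw^\ell\rangle+\tfrac{\beta}{2}\|\bw-\bw^\ell\|_T^2\Bigr\}.
\]
The feasibility of $\bw^\ell$ observed above gives $G_\ell\ge 0$, and since $\bw^{\ell+1}$ attains the minimum in \eqref{subP:FW}, $G_\ell = -\langle \bs_q^\ell,\bw^{\ell+1}-\bw^\ell\rangle - \tfrac{\beta}{2}\|\bw^{\ell+1}-\bw^\ell\|_T^2$. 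The statement to prove is precisely $G_\ell \le \tfrac{1}{k}(q(\bw^0)-q^*)$ for some $\ell\in[k]$, so the goal reduces to bounding $\sum_\ell G_\ell$.

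The bound on the sum comes from concavity of $q$: the supergradient inequality $q(\bw^{\ell+1})\le q(\bw^\ell)+\langle \bs_q^\ell,\bw^{\ell+1}-\bw^\ell\rangle$ together with $\tfrac{\beta}{2}\|\bw^{\ell+1}-\bw^\ell\|_T^2\ge 0$ yields
\[
q(\bw^{\ell+1}) \;\le\; q(\bw^\ell) - G_\ell.
\]
Telescoping from $\ell=0$ to $\ell=k-1$ and using $q(\bw^k)\ge q^*$ (since $\bw^k$ is feasible for \eqref{P:FW} by Lemma \ref{lem:subseq}(i)) gives $\sum_{\ell=0}^{k-1} G_\ell \le q(\bw^0)-q^*$. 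A pigeonhole step then produces $\ell\in[k]$ with $G_\ell\le \tfrac{1}{k}(q(\bw^0)-q^*)$, and rewriting $G_\ell$ in its definitional form produces \eqref{KKT2:P FWeps}.

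The main obstacle I anticipate is the bookkeeping around the auxiliary variables $s,t$: one must carefully argue that $\bw^\ell$ is feasible for its own linearized constraint so that $G_\ell\ge 0$, which is not immediate from the original DC formulation but becomes transparent once one fixes $t^\ell=G(\bx^\ell)$ and invokes $\bx^\ell\in\bar{\mathcal X}$ from Lemma \ref{lem:subseq}(i). The rest is a purely mechanical adaptation of the classical $O(1/k)$ FW analysis, with the DC structure (concavity of $q$) replacing the usual smoothness/curvature argument — which is why the rate improves from the standard $O(1/\sqrt k)$ to $O(1/k)$.
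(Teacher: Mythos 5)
Your proposal is correct and takes essentially the same route as the paper's own proof: both rest on Lemma \ref{lem:equi DC FW}, the concavity supergradient inequality $q(\bw^{\ell+1})\le q(\bw^\ell)+\langle \bs_q^\ell,\bw^{\ell+1}-\bw^\ell\rangle$, a telescoping/averaging step with pigeonhole to select $\ell\in[k]$, and the optimality of $\bw^{\ell+1}$ in \eqref{subP:FW}. The only cosmetic difference is that you package the subproblem's optimal value as an explicit FW gap $G_\ell$ before telescoping, whereas the paper telescopes $\langle \bs_q^\ell,\bw^\ell-\bw^{\ell+1}\rangle$ first and applies the optimality of $\bw^{\ell+1}$ at the end.
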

\begin{proof}
According to Lemma \ref{lem:equi DC FW}, a sequence $\{\bw^k\}$ generated by iterations \eqref{subP:FW} satisfies $\bw^k=(\bx^k,s^k,t^k)$ for all $k \ge 0$.
Since $q$ is a concave function and $\bs_q^k \in \partial q(\bw^k)$, we have
\begin{align*}
\langle \bs_q^k, \bw^{k} - \bw^{k+1} \rangle \le q(\bw^k) - q(\bw^{k+1}).
\end{align*}
Averaging the above inequality over $k$ yields
\begin{align*}
\frac{1}{k}\sum_{i=1}^k  \langle \bs_q^k, \bw^{k} - \bw^{k+1} \rangle \le \frac{1}{k}\left(q(\bw^0) - q(\bw^{k+1}) \right) \le \frac{1}{k}\left(q(\bw^0) - q^* \right),
\end{align*}
where the last inequality follows from the fact that $q^* \in \R$ is the optimal value of Problem \eqref{P:FW}. This implies that there exists an index $\ell \in \{1,\dots,k\}$ such that
\begin{align}\label{eq1:prop rate FW}
\langle \bs_q^\ell, \bw^{\ell} - \bw^{\ell+1} \rangle \le \frac{1}{k}\left(q(\bw^0) - q^* \right).
\end{align}
Moreover, it follows from the optimality $\bw^{k+1}$ to Problem \eqref{subP:FW} that for all $\bw \in \mW$ satisfying $Q(\bw^\ell) + \langle \bs_Q^\ell,\bw-\bw^\ell \rangle \le 0$,
\begin{align*}
\langle \bs_q^\ell,\bw^{\ell+1} - \bw^\ell \rangle +\frac{\beta}{2}\|\bw^{\ell+1} - \bw^\ell\|_T^2 \le \langle \bs_q^\ell,\bw - \bw^\ell \rangle +\frac{\beta}{2}\|\bw^\ell - \bw\|_T^2.
\end{align*}
This, together with \eqref{eq1:prop rate FW}, implies that it holds for all $\bw \in \mW$ satisfying $Q(\bw^\ell) + \langle \bs_Q^\ell,\bw-\bw^\ell \rangle \le 0$ that
\begin{align*}
\langle \bs_q,  \bw -{\bw^\ell}\rangle  +  \frac{\beta}{2}\|\bw-\bw^\ell \|_T^2 & \ge
\langle \bs_q^\ell,\bw^{\ell+1} - \bw^\ell \rangle +\frac{\beta}{2}\|\bw^{\ell+1} - \bw^\ell\|_T^2  \ge -\frac{1}{k}\left(q(\bw^0) - q^*\right).
\end{align*}
We complete the proof.
\end{proof}

We remark that in contrast to Theorems \ref{thm:subseq} and \ref{thm:entire conv} that require $\rho+2\beta >0$, Theorem \ref{thm:rate FW} can be applied to analyze the case of $\rho+2\beta \ge 0$. It is worth noting that when $\beta=0$, the standard iteration complexity of the FW method for general nonconvex problems is $O(1/\sqrt{k})$ (see, e.g., \cite{lacoste2016convergence}), but the iteration complexity of our proposed FW method is improved to $O(1/k)$ as we construct a concave minimization surrogate using the DC structure. 

\section{Extensions}\label{sec:exte}

In this section, we first discuss how to extend our approach to solve chance constrained problems with chance constraints estimated by general non-parametric estimation. We then extend the proximal DC algorithm for solving Problem \eqref{CCP:sample-DC} with multiple DC constraints, which can be used to solve chance constrained programs with multiple chance constraints.
\subsection{Extension to General Non-Parametric Estimation of the Empirical Quantile}\label{subsec:ext1}

We consider non-parametric estimators that can be represented as a linear combination of order statistics of a sample drawn from the population distribution. The main advantage of non-parametric estimators is that they are easy to calculate and often resistant to outliers. Due to this, non-parametric estimators have been widely used in the literature; see, e.g., \cite{cui2018portfolio,martins2018nonparametric}. 
This naturally motivates us to apply the non-parametric estimators to Problem \eqref{P:CCP sample}.

An L-estimator is a commonly used non-parametric estimator. Suppose that a sample of $N$ i.i.d. realizations $\{X_i\}_{i=1}^N$ of some unknown distribution $F_X$ is available. In general, L-estimators of the empirical quantile take the form $\sum_{i=1}^Nw_iX_{[i]}$,
where $\bm{w} \in \Delta := \left\{\bu \in \R^N: \b0 \le \bu \le \bo, \bo^T\bu = 1\right\}$. In statistics, there are many different L-estimators that outperform the empirical quantile in both theory and practice; see, e.g., \cite{dielman1994comparison,jadhav2009parametric,van2000asymptotic}. Then, we consider some typical L-estimators of the $p$ empirical quantile for $p \in (0,1)$, i.e., $X_{[M]}$, where $M=\lceil pN \rceil$. For instance, the weighted average at $X_{[M-1]}$ (see, e.g., \cite{dielman1994comparison,jadhav2009parametric}) defined as
\begin{align*}
L_1 = (1-g)X_{[M-1]} + gX_{[M]},
\end{align*}
where $g=Np-M+1$.

Another widely used non-parametric estimator is the kernel quantile estimator (see, e.g., \cite{li2007nonparametric,parzen1979nonparametric}) defined as
\begin{align*}
L_2 = \sum_{i=1}^N \left(  \int_{(i-1)/N}^{i/N}\frac{1}{h}K\left( \frac{x-p}{h} \right) dx  \right) X_{[i]},
\end{align*}
where $h > 0$ is a constant and $K(t)$ is a kernel function satisfying $\int_{-\infty}^\infty K(t)dt = 1$, $K(t)\ge 0$, and $K(-t) = K(t)$. It is worth noting that this kernel quantile estimator can be viewed as a smoothing version of the empirical quantile estimator.


We consider a more general form of non-parameter estimators $\sum_{i=1}^Nw_i\widehat{C}_{[i]}(\bx)$, where the weight $\bm{w} \ge 0$ is given. This covers L-estimators and kernel quantile estimators. Then we obtain the following surrogate of \eqref{P:CCP}:
\begin{align}\label{CCP:sample-L}
\min_{\bx \in \mX}\left\{f(\bx):\ \sum_{i=1}^Nw_i\widehat{C}_{[i]}(\bx) \le 0\right\},
\end{align}
It is worth pointing out that Problem \eqref{P:CCP sample} is actually a special case of Problem \eqref{CCP:sample-L} by taking $w_M=1$ and $w_i=0$ for all $i \neq M$. Then, we reformulate this problem into a DC constrained DC program. Before we proceed, let
\begin{align}\label{set:Z-L}
\bar{\mZ} := \left\{\bx\in \R^n:\ \sum_{i=1}^Nw_i\widehat{C}_{[i]}(\bx) \le 0\right\}.
\end{align}
Similar to Lemma \ref{lem:set-Z}, we can also express the above constraint as a DC constraint.
\begin{lemma}
Let
\begin{align}\label{eq:L G H}
G(\bx) := \sum_{i=1}^N w_i \sum_{j=i}^N \widehat{C}_{[j]}(\bx),\  H(\bx) :=  \sum_{i=1}^{N-1}w_i\sum_{j=i+1}^N \widehat{C}_{[j]}(\bx),
\end{align}
where $\bw \ge \bm 0$. Then, $G$ and $H$ are both continuous and convex functions, and the chance constraint in $\bar{\mZ}$ is equivalent to a DC constraint
\begin{align*}
  G(\bx) - H(\bx) \le 0.
\end{align*}
\end{lemma}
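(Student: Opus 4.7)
The plan is to prove the lemma by separating it into three routine parts: (a) an algebraic telescoping identity that establishes the equivalence of the two constraints, (b) convexity of $G$ and $H$, which essentially reuses the argument from Lemma~\ref{lem:set-Z}, and (c) continuity, which follows from Assumption~\ref{AS:1}(c).

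First, I would verify the equivalence. Fix $\bx$ and compute, with the convention that the empty sum is zero,
\begin{align*}
G(\bx) - H(\bx) &= \sum_{i=1}^N w_i \sum_{j=i}^N \widehat{C}_{[j]}(\bx) - \sum_{i=1}^{N-1} w_i \sum_{j=i+1}^N \widehat{C}_{[j]}(\bx) \\
&= \sum_{i=1}^N w_i \left( \sum_{j=i}^N \widehat{C}_{[j]}(\bx) - \sum_{j=i+1}^N \widehat{C}_{[j]}(\bx) \right) = \sum_{i=1}^N w_i \widehat{C}_{[i]}(\bx),
\end{align*}
so $\sum_{i=1}^N w_i \widehat{C}_{[i]}(\bx)\le 0$ if and only if $G(\bx)-H(\bx)\le 0$.

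Second, for convexity, the key observation is that for each fixed $k\in\{1,\dots,N\}$, the function $\bx \mapsto \sum_{j=k}^N \widehat{C}_{[j]}(\bx)$ is the sum of the top $N-k+1$ order statistics of $\widehat{C}(\bx)$, and by exactly the same combinatorial rewrite used in \eqref{eq3:lem-set-Z},
\begin{align*}
\sum_{j=k}^N \widehat{C}_{[j]}(\bx) = \max\left\{ \sum_{t=1}^{N-k+1} \widehat{C}_{i_t}(\bx): 1\le i_1 < \cdots < i_{N-k+1}\le N \right\}.
\end{align*}
Since each $\widehat{C}_i(\bx) = C(\bx,\hat{\bxi}^i) = \max_j c_j(\bx,\hat{\bxi}^i)$ is convex (Assumption~\ref{AS:1}(c) together with the pointwise-max and sum rules of \cite[Proposition 2.1.2]{hiriart2004fundamentals}), the right-hand side is a pointwise maximum of sums of convex functions, hence convex. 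Because $\bw\in\Delta$ gives $w_i\ge 0$ for all $i$, both $G$ and $H$ are non-negative combinations of convex functions and are therefore convex.

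Third, continuity of $G$ and $H$ follows from the continuity of each $\widehat{C}_{[j]}(\bx)$, which in turn follows from the continuity of $C(\bx,\hat{\bxi}^i)$ under Assumption~\ref{AS:1}(c) and the continuity of the order-statistic map. I do not anticipate any real obstacle here; the only subtlety worth flagging is making sure the empty-sum convention is used correctly in the telescoping step (in particular the $i=N$ term of $G$ contributes only $w_N \widehat{C}_{[N]}(\bx)$, which matches the missing $i=N$ term in $H$), but this is a bookkeeping matter rather than a substantive difficulty.
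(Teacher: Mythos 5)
Your proposal is correct and follows essentially the same route as the paper's proof: convexity of $G$ and $H$ comes from the max-over-subsets representation of partial sums of order statistics (as in Lemma~\ref{lem:set-Z}) together with the nonnegativity of the weights $w_i$, and the constraint equivalence is the same telescoping identity, which the paper writes by substituting \eqref{eq:decomp zM} into $\sum_{i=1}^N w_i \widehat{C}_{[i]}(\bx)$ while you run the computation in the reverse direction starting from $G(\bx)-H(\bx)$. Your handling of the $i=N$ boundary term via the empty-sum convention matches the paper's separate treatment of $w_N\widehat{C}_{[N]}(\bx)$, so there is no gap.
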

\begin{proof}
Using the argument in Lemma \ref{lem:set-Z}, we can show that $\sum_{j=i}^N \widehat{C}_{[j]}(\bx)$ for $i=1,\dots,N$ are convex functions. Since each of $G$ and $H$  in \eqref{eq:L G H} is a positive weighted sum of convex functions, $G$ and $H$ are both convex functions.
According to \eqref{eq:decomp zM}, we have for $i=1,\dots,N-1$,
\begin{align*}
\widehat{C}_{[i]}(\bx) =  \sum_{j=i}^N \widehat{C}_{[j]}(\bx) - \sum_{j=i+1}^N \widehat{C}_{[j]}(\bx).
\end{align*}
This yields that
\begin{align*}
\sum_{i=1}^Nw_i\widehat{C}_{[i]}(\bx) & = \sum_{i=1}^{N-1}w_i\widehat{C}_{[i]}(\bx) + w_N\widehat{C}_{[N]}(\bx)  = \sum_{i=1}^{N-1}w_i\left( \sum_{j=i}^N \widehat{C}_{[j]}(\bx) - \sum_{j=i+1}^N \widehat{C}_{[j]}(\bx) \right) + w_N\widehat{C}_{[N]}(\bx) \\
& = \sum_{i=1}^N w_i \sum_{j=i}^N \widehat{C}_{[j]}(\bx) -  \sum_{i=1}^{N-1}w_i\sum_{j=i+1}^N \widehat{C}_{[j]}(\bx) = G(\bx) - H(\bx).
\end{align*} 
\end{proof}

We then obtain a DC constrained DC program for L-estimators or kernel quantile esitmators  of the empirical quantile.
Consequently, we can still apply the proposed pDCA for solving the resulting problem.


\subsection{Extension to Multiple DC Constraints}\label{subsec:ext2}

In this subsection, we consider that Problem \eqref{CCP:sample-DC} has multiple DC constraints
\begin{align}\label{set:X1}
\ G_i(\bx) - H_i(\bx) \le 0,\ \text{for}\ i = 1,\dots,K,
\end{align}
where  $G_i:\R^n \rightarrow \R$ and $H_i:\R^n \rightarrow \R$ are continuous and convex functions.  That is, we consider the problem
\begin{align}\label{DC:mul}
\begin{aligned}
\min_{\bx \in \mX}\ \ f(\bx) := g(\bx) - h(\bx)\qquad \st\ \ G_i(\bx) - H_i(\bx) \le 0,\ \text{for}\ i = 1,\dots,K.
\end{aligned}
\end{align}
We can still apply the proximal DC algorithm for solving this problem. Specifically, suppose that an initial point $\bx^0 \in \mX$ satisfying $G_i(\bx^0) - H_i(\bx^0) \le 0,~i=1,\ldots,K$ is available. At the $k$-th iteration, we choose
$\bs_h^k \in \partial h(\bx^k)$ and $\bs_{H_i}^k \in \partial H_i(\bx^k)$ for $i = 1,\dots,K $, and generate the next iterate $\bx^{k+1}$ by solving the following convex subproblem
\begin{align}\label{DC:iterations}
\begin{aligned}
\bx^{k+1} \in \argmin_{\bx \in \mX} \quad & g(\bx) -  h(\bx^k) - \langle \bs_h^k, \bx-\bx^k \rangle  + \frac{\beta}{2}\|\bx-\bx^k\|^2 \\
\st\quad &  G_i(\bx) - H_i(\bx^k) - \langle \bs_{H_i}^k, \bx-\bx^k \rangle  \le 0,\ \text{for}\ i = 1,\dots,K,
\end{aligned}
\end{align}
where $\beta \ge 0$ is a penalty parameter. In particular, we can also prove subsequential convergence to a KKT point for the proximal DC algorithm by assuming the following generalized MFCQ:
\begin{assumption}[Generalized MFCQ] \label{AS:MFCQ-1}
The generalized MFCQ holds for Problem \eqref{DC:mul}, i.e., there exists $\by \in \mX$ such that
\begin{align*}
\langle \nabla \omega_i(\bx),\by-\bx\rangle < 0,\ \text{for all}\ i \in \mA(\bx),
\end{align*}
and if $G_i(\bm x) = H_i(\bm x)$, it holds that  
\begin{align*}
    G_i(\by) - H_i(\bx) - \inf_{\bs_{H_i} \in \partial H_i(\bx)} \langle \bs_{H_i}, \by - \bx \rangle < 0,\ i = 1,\dots,K.
\end{align*}
\end{assumption}
Using the similar argument in Section \ref{subsec:subseq}, we can obtain the following result:
\begin{coro}
Suppose that Assumptions \ref{AS:1} and \ref{AS:MFCQ-1} hold, the function $f$ is given in Problem \eqref{DC:mul}, $\mX$ is of the form of \eqref{set:X}, and the level set 
\[\left\{\bx \in {\mX}:\ f(\bx) \le f(\bx^0),~ G_i(\bx) - H_i(\bx) \le 0,\ \text{for}\ i = 1,\dots,K\right\}\]
is bounded. Let $\{\bx^k\}$ be the sequence generated by \eqref{DC:iterations}  with $\rho + 2\beta > 0$. Then, any accumulation point of $\{\bx^k\}$ is a KKT point of Problem \eqref{DC:mul}.
\end{coro}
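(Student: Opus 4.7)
The plan is to mirror the argument of Section \ref{subsec:subseq} (culminating in \Cref{thm:subseq}), replacing the single DC inequality by the system $G_i(\bx)-H_i(\bx)\le 0$, $i=1,\ldots,K$, and carrying each step over componentwise. First, I would establish the analogue of \Cref{lem:subseq}: feasibility of every iterate follows from the convexity of each $H_i$, since for each $i$ the subproblem \eqref{DC:iterations} enforces $G_i(\bx^{k+1})\le H_i(\bx^k)+\langle \bs^k_{H_i},\bx^{k+1}-\bx^k\rangle \le H_i(\bx^{k+1})$. Sufficient decrease $f(\bx^{k+1})-f(\bx^k)\le -\tfrac{\rho+\beta}{2}\|\bx^{k+1}-\bx^k\|^2$ follows exactly as before from the $\rho$-strong convexity of $g$, the convexity of $h$, and the objective term $\tfrac{\beta}{2}\|\bx-\bx^k\|^2$; the multiple constraints play no role here. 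The level-boundedness assumption then yields boundedness of $\{\bx^k\}$ and, by telescoping, $\|\bx^{k+1}-\bx^k\|\to 0$.

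The next step is to write down the KKT system for the subproblem \eqref{DC:iterations}. Assumption \ref{AS:MFCQ-1} supplies a single $\by\in\mX$ with $G_i(\by)-H_i(\bx^k)-\langle \bs^k_{H_i},\by-\bx^k\rangle<0$ for every $i$ with $G_i(\bx^k)=H_i(\bx^k)$ and $\langle\nabla\omega_j(\bx^k),\by-\bx^k\rangle<0$ for every $j\in\mA(\bx^k)$, which is precisely the Slater condition needed for \cite[Corollary 28.2.1, Theorem 28.3]{rockafellar1970convex} to apply to the $k$-th subproblem. This yields multipliers $\lambda_i^k\ge 0$ ($i=1,\ldots,K$), $u_j^k$ ($j\in\mE$), $v_j^k\ge 0$ ($j\in\mI$), with complementary slackness $\lambda_i^k\bigl(G_i(\bx^{k+1})-H_i(\bx^k)-\langle \bs^k_{H_i},\bx^{k+1}-\bx^k\rangle\bigr)=0$ and a stationarity inclusion that is exactly \eqref{eq7:lem-decrease} with the single $\lambda^k(\partial G(\bx^{k+1})-\bs^k_H)$ term replaced by $\sum_{i=1}^K \lambda_i^k(\partial G_i(\bx^{k+1})-\bs^k_{H_i})$.

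The main obstacle is the boundedness of the multiplier sequence $\{(\lambda_1^k,\ldots,\lambda_K^k,u^k,v^k)\}$, which is what makes \Cref{thm:subseq} nontrivial. I would adapt the normalization argument at the end of the proof of \Cref{thm:subseq}: set $\rho^k:=\bigl(\sum_i(\lambda_i^k)^2+\sum_{j\in\mE}(u_j^k)^2+\sum_{j\in\mI}(v_j^k)^2\bigr)^{1/2}$, assume $\rho^k\to\infty$ along a subsequence, and pass to limits of the normalized multipliers $(\tau_i^k,\mu_j^k,\nu_j^k):=(\lambda_i^k,u_j^k,v_j^k)/\rho^k$ to obtain nonnegative limits $(\tau_i^*,\mu_j^*,\nu_j^*)$ with $\sum_i(\tau_i^*)^2+\|\mu^*\|^2+\|\nu^*\|^2=1$. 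Using $\|\bx^{k+1}-\bx^k\|\to 0$, outer semicontinuity of $\partial g,\partial h,\partial G_i,\partial H_i$, and boundedness of these subdifferentials on bounded sets, dividing the stationarity inclusion by $\rho^k$ yields $\bm 0\in \sum_i \tau_i^*(\partial G_i(\bx^*)-\bs^*_{H_i})+\sum_{j\in\mE}\mu_j^*\ba_j+\sum_{j\in\mI}\nu_j^*\nabla\omega_j(\bx^*)$. The contradiction is then obtained by splitting into two cases: if every $\tau_i^*=0$, the proof that $\mu^*=0$ and $\nu^*=0$ goes through verbatim by testing against $\by-\bx^*$ from Assumption \ref{AS:MFCQ-1} and using linear independence of $\{\ba_j\}$. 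If some $\tau_{i_0}^*>0$, I split into whether $G_{i_0}(\bx^*)<H_{i_0}(\bx^*)$ (then complementary slackness and $\|\bx^{k+1}-\bx^k\|\to 0$ force $\tau_{i_0}^*=0$, a contradiction) or $G_{i_0}(\bx^*)=H_{i_0}(\bx^*)$ (then testing against the common Slater direction $\by-\bx^*$ yields $\tau_i^*\langle \bs^*_{G_i}-\bs^*_{H_i},\by-\bx^*\rangle<0$ for all $i$ with $G_i(\bx^*)=H_i(\bx^*)$, which combined with $\nu_j^*\langle \nabla\omega_j(\bx^*),\by-\bx^*\rangle\le 0$ gives a strict negative right-hand side, contradicting the equality to $0$). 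The key point that makes this multi-constraint extension go through cleanly is that Assumption \ref{AS:MFCQ-1} provides a single direction $\by-\bx^*$ that is simultaneously a Slater direction for every active DC constraint and every active inequality constraint.

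Once the multipliers are shown bounded, taking limits along a convergent subsequence gives $\bs^k_h\to\bs^*_h\in\partial h(\bx^*)$, $\bs^k_{H_i}\to \bs^*_{H_i}\in\partial H_i(\bx^*)$, $\lambda^k_i\to\lambda_i^*\ge 0$ with $\lambda_i^*(G_i(\bx^*)-H_i(\bx^*))=0$, and the stationarity inclusion $\bm 0\in\partial g(\bx^*)-\partial h(\bx^*)+\sum_{i=1}^K\lambda_i^*(\partial G_i(\bx^*)-\partial H_i(\bx^*))+\mN_\mX(\bx^*)$, which is precisely the KKT system for Problem \eqref{DC:mul} (the natural multi-constraint analogue of \Cref{def:KKT}). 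This completes the proposed argument.
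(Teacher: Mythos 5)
Your proposal is correct and follows essentially the same route as the paper, which itself gives no separate proof of this corollary but simply invokes ``the similar argument in Section \ref{subsec:subseq}'': your componentwise adaptation of \Cref{lem:subseq}, the subproblem KKT system, and the multiplier-boundedness contradiction via normalization and the common Slater direction from Assumption \ref{AS:MFCQ-1} is precisely that argument made explicit. The one point worth tightening is in your Case 2: the strict inequality $\tau_i^*\langle \bs^*_{G_i}-\bs^*_{H_i},\by-\bx^*\rangle<0$ holds only for active indices with $\tau_i^*>0$ (the others contribute $0$), but since $\tau_{i_0}^*>0$ supplies one strictly negative term this does not affect the contradiction.
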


\section{Experimental Results}\label{sec:expe}

In this section, we conduct experiments to study the performance of our proposed method on both synthetic and real data sets. For ease of reference, we denote our proposed method by pDCA (resp. DCA) when $\beta > 0$ (resp. $\beta=0$) in Algorithm \ref{alg-1}. A key step in implementing pDCA and DCA is to compute a subgradient of $H$ at an iterate $\bm x^k$. According to \Cref{lem:subg H}, we first need to compute an element in $\mathcal{M}_c^i(\bm x^k)$ (see \eqref{index:c i}) and $\mathcal{M}_H(\bm x^k)$ (see \eqref{index:h H}), respectively. Specifically, for the former one, we compute the function values of $c_j(\bm x^k,\hat{\bxi}^i)$ for all $j=1,\dots,m$ and obtain an element in the index set $\mM_c^i(\bm x^k)$ by finding an index $j^* \in \{1,\dots,m\}$ such that $c_{j^*}(\bm x^k,\hat{\bxi}^i)$ has the largest value. For the latter one, after we compute $C(\bm x^k,\hat{\bxi}^i)$ for all $i=1,\dots,N$ using \eqref{def:c}, we obtain an element in the index set $\mM_H(\bm x^k)$ by finding an index $(i_1^*,\dots,i_{N-M}^*) \in \mI$ such that  $\{C(\bm x^k,\hat{\bxi}^{i_t^*})\}_{t=1}^{N-M}$ is the $N-M$ largest elements in $\{C(\bm x^k,\hat{\bxi}^{i})\}_{i=1}^N$, where $\cal I$ is defined in \eqref{set:I}. Finally, using these and \Cref{lem:subg H}, we obtain a subgradient of $H$ at $\bm x^k$. 

We also compare our methods with some state-of-the-art methods, which are CVaR in \cite{nemirovski2007convex}, the bisection-based CVaR method (Bi-CVaR) in \cite[Section 4.1]{bai2021augmented}, which is a heuristic approach that combines binary search and CVaR and can improve the performance of CVaR, mixed-integer program (MIP) in \cite{ahmed2008solving}, an augmented Lagrangian decomposition method (ALDM) in \cite{bai2021augmented}, and a DC approximation-based successive convex approximation method (SCA) in \cite{hong2011sequential}.
In particular, we use the optimization solver \textsf{Gurobi} (version 9.5.2) for solving linear, quadratic, and mixed integer subproblems. All the experiments are conducted on a Linux server with 256GB  RAM and 24-core AMD EPYC 7402 2.8GHz CPU. Our codes are implemented in MATLAB 2022b and are available at \cite{wang2025proximal} and \url{https://github.com/INFORMSJoC/2024.0648}.
For pDCA, we update the penalty parameter $\beta$ in an adaptive manner. That is, we set $\beta^{k+1}=\beta^k/4$ for $k=0,1,2,\dots$
For pDCA on each data set, we explore three different settings of the regularization parameter $\beta^0$, i.e., we set $\beta^0=0.1, 1, 10$  for pDCA-1, pDCA-2 and pDCA-3, respectively.
We set the parameters of the remaining methods as those provided in the corresponding papers. 
For the tested methods DCA, pDCA, Bi-CVaR, ALDM, and SCA, we use the point returned by CVaR as their starting point. In each test, we terminate the tested methods when $|f^{k}-f^{k+1}|/\max\{1,|f^{k+1}|\} \le 10^{-6}$, for $k = 0,1,2,\dots$, 
or the running time reaches $1800$ seconds. Since we only check the running time at the end of each iteration, the actual finishing time of an algorithm may be longer than this limit. 

\subsection{VaR-Constrained Portfolio Selection Problem}

In this subsection, we study the VaR-constrained mean-variance portfolio selection problem, which aims to minimize the risk while pursuing a targeted level of returns with probability at least $1-\alpha$.  Let $\bm{\mu} \in \R^n$ and $\bm{\Sigma} \in \R^{n\times n}$ respectively denote expectation and covariance matrix of the returns of $n$ risky assets, and $\gamma \in \R_+$ denote the risk aversion factor. By letting $\bx \in \R_+^n$ denote the allocation vector such that the weight of the $i$-th risky asset is $x_i$ for $i \in [n]$, this problem is formulated as follows:
\begin{equation} \label{equ:psp}
\begin{aligned}
\min_{\bx \in \R^n} &\quad \gamma \bx^{T} \bm{\Sigma}\bx-\bm{\mu}^T\bx \qquad {\rm s.t.} \quad \mathbb{P}\left(\bm{\xi}^{T} \bm{x} \geq R\right) \geq 1-\alpha,\ \sum_{i=1}^n x_i = 1,\ 0 \leq x_i \leq u,\ i=1,\dots,n,
\end{aligned}
\vspace{-0.1cm}
\end{equation}
where $R \in \R_+$ is a prespecified level on the return and $u \in \R_+$ is an upper bound on the weights.

\vspace{-0.1in}
\begin{table}[H]
\footnotesize 
\caption{Comparison on the portfolio selection problem (averaged over 5 instances)}\label{table-1}
\begin{center}\vspace{-0.2in}
\begin{tabular}{c|| l| ccccccccc  ccccccr}
\hline
($\alpha$,$n$) &  & \textbf{ MIP}  & \textbf{ CVaR} & \textbf{ Bi-CVaR} & \textbf{ DCA} & \textbf{ pDCA-1} & \textbf{ pDCA-2}  & \textbf{pDCA-3} & \textbf{ ALDM}  & \textbf{ SCA} \\
\hline
 \multirow{3}{*}{$\begin{pmatrix}0.05\\100\end{pmatrix}$} &fval&-1.3550&-1.1861&-1.2592&-1.2860&-1.2897&-1.3037& \textbf{-1.3087}&-1.3221&-1.2732\\
&time&35.87 &0.1271 &1.868 &\textbf{0.4603}&0.7387&0.9553& 2.2919&3.576&0.8343\\
&prob&0.9500&0.9887&{0.9500}&0.9627&0.9587&0.9587&0.9540 &0.9420*&0.9593\\
\hline
\multirow{3}{*}{$\begin{pmatrix}0.05\\200\end{pmatrix}$} &fval& -1.3531&-1.1914&-1.2754&-1.2950&-1.2923&-1.3066& \textbf{-1.3169}&-1.3284&-1.2787\\
&time&1800 &0.3778 &5.013 &\textbf{1.683}&1.808&2.861&5.8706 &9.901&2.589\\
&prob&0.9500&0.9873&{0.9500}&0.9553&0.9560&0.9560& 0.9523&0.9447*&0.9580\\
\hline
\multirow{3}{*}{$\begin{pmatrix}0.05\\300\end{pmatrix}$} &fval& -1.3484 &-1.1830&-1.2629& -1.2935 &-1.2835&-1.2934&\textbf{-1.3040} &-1.3279&-1.2525\\
&time&1800&0.9473&12.26&7.403&\textbf{6.188}&8.749&12.9648 &19.59&6.890\\
&prob&0.9500&0.9853&{0.9500}&0.9529&0.9553&0.9553&0.9540 &0.9456*&0.9584\\
\hline
\multirow{3}{*}{$\begin{pmatrix}0.05\\400\end{pmatrix}$} &fval& -1.3719&-1.1939&-1.2886&-1.3143&-1.3206&\textbf{-1.3291}&-1.3266 &-1.3150&-1.2775\\
&time&1800 &1.861 &26.61 &20.07&\textbf{15.87}&16.46&26.0155 &24.01&16.26\\
&prob&0.9502&0.9860&{0.9500}&0.9547&0.9512&0.9512&0.9520 &0.9467*&0.9595\\
\hline\hline
\multirow{3}{*}{$\begin{pmatrix}0.1\\100\end{pmatrix}$} &fval& -1.4429&-1.2284&-1.3781&-1.3699&-1.3761&-1.3839&\textbf{-1.3913} &-1.3545&-1.3826\\
&time&7.376 &0.1262 &1.875 &0.7790&\textbf{0.7084}&0.9591&2.3541 &0.7826&0.8081\\
&prob&0.9000&0.9687&{0.9007}&0.9140&0.9113&0.9113&0.9080 &0.9093&0.9153\\
\hline
\multirow{3}{*}{$\begin{pmatrix}0.1\\200\end{pmatrix}$} &fval& -1.4244&-1.2371&-1.3815&-1.3772&-1.3764&\textbf{-1.3934}&-1.3912 &-1.3266&-1.3827\\
&time&1225 &0.3467 &5.093 &3.385&3.040&4.350&7.0798 &\textbf{0.3601}&3.582\\
&prob&0.9000&0.9620&{0.9007}&0.9087&0.9127&0.9127& 0.9053 &0.9193&0.9103\\
\hline
\multirow{3}{*}{$\begin{pmatrix}0.1\\300\end{pmatrix}$} &fval&-1.4410&-1.2284&-1.3999&-1.4015&-1.3959&\textbf{-1.4052}&-1.4014 &-1.3000&-1.3899\\
&time&1800 &0.9493 &12.32 &14.44&11.43&11.18&15.7715 &\textbf{0.8458}&11.16\\
&prob&0.9000&0.9633&{0.9000}&0.9053&0.9042&0.9042&0.9056 &0.9353&0.9107\\
\hline
\multirow{3}{*}{$\begin{pmatrix}0.1\\400\end{pmatrix}$} &fval&-1.4694&-1.2467&-1.4200&\textbf{-1.4352}&-1.4316&-1.4262& -1.4272&-1.3017&-1.4190\\
&time&1800 &1.833 &26.42 &31.05&32.69&27.70&40.6247 &\textbf{0.9201}&27.62\\
&prob&0.9000&0.9653&{0.9002}&0.9047&0.9067&0.9067&0.9055 &0.9412&0.9100\\
\hline
\end{tabular}
\end{center}
\end{table}

We use $2523$ daily return data of $435$ stocks included in Standard \& Poor's 500 Index between March 2006 and March 2016, which can be downloaded from \url{https://sem.tongji.edu.cn/semch_data/faculty_cv/xjz/ccop.html}. 
Following \cite{bai2021augmented}, we generate the data input by choosing $n=100,200,300,400$, respectively. For each $n$, we generate 5 instances from the daily return data set by randomly selecting $n$ stocks from the 435 stocks and $N=3n$ sample points $\hat{\bxi}^\ell$ for all $\ell \in [N]$ from the 2523 daily return data.
Then, we compute the sample mean $\bm{\mu}$ and sample covariance matrix $\bm{\Sigma}$ using these data.
We set the remaining parameters as follows: $R=0.02\%$,  $\gamma = 2$, and $u = 0.5$.
In \Cref{table-1} and the other two tables below for the other two experiments, we use ``fval" to denote the averaged returned objective value for the test problems, ``time" the averaged CPU time (in seconds), and ``prob" the empirical in-sample probability of the chance constraint, all of which are averaged over 5 instances. We highlight the best values except those of MIP and CVaR for items ``fval" and ``time" since MIP is not suitable for large-scale data sets and the solution returned by CVaR is too conservative. 

We observe from Table \ref{table-1} that although MIP achieves the lowest objective value, it is the most time-consuming. In addition, we observe that pDCA is slightly better than DCA and both pDCA and DCA generally outperform CVaR, Bi-CVaR, ALDM, and SCA in terms of the objective value. \Cref{table-1} also demonstrates that CVaR is the fastest method, while DCA and pDCA are comparable to the remaining ones. 
Finally, we also observe that the in-sample probabilities of DCA and pDCA are generally comparable to those of the other methods, except that ALDM fails to satisfy the chance constraint for $\alpha=0.05$ and sometimes is too conservative for $\alpha=0.1$.

\vspace{-0.1in}
\begin{table}[t]
\footnotesize
 \caption{Comparison on the probabilistic transportation problem  (averaged over 5 instances)}\vspace{-0.2in} \label{table-2}
\begin{center}
\begin{tabular}{c|| l| ccccccccc  ccccccr}
\hline
($\alpha$,$N$) &  & \textbf{ MIP}  & \textbf{ CVaR} & \textbf{ Bi-CVaR} & \textbf{ DCA} & \textbf{ pDCA-1} & \textbf{ pDCA-2} & \textbf{ pDCA-3}  & \textbf{ ALDM}  & \textbf{ SCA} \\
\hline
 \multirow{3}{*}{$\begin{pmatrix}0.05\\500\end{pmatrix}$}&	fval	&	4.2584	&	4.3843	&	4.3700	&	4.3262&\textbf{4.3239}		&	\textbf{4.3239}	&	4.3251	&	4.7091	&	4.1716	\\
&	time	&	73.89	&	1.796	&	22.84	&	\textbf{3.681}&427.5		&	405.2	&	503.1	&	58.76	&	6.697	\\
&	prob	&	0.9500	&	1.0000	&	0.9504	&	{0.9500}&0.9500		&	{0.9500}	&	{0.9500}	&	0.9504	&	0.8180*	\\
\hline
 \multirow{3}{*}{$\begin{pmatrix}0.05\\1000\end{pmatrix}$} &	fval	&	4.3655	&	4.5423	&	4.4931	&	4.4445& \textbf{4.4431}		&	4.4435	&	4.4467	&	4.8644	&	4.4447	\\
&	time	&	543.0	&	2.818	&	44.35	&	\textbf{5.895}&2064		&	2441	&	1915	&	50.63	&	73.90	\\
&	prob	&	0.9500	&	0.9984	&	{0.9500}	&	{0.9500}&0.9500		&	{0.9500}	&	{0.9500}	&	0.9636	&	0.9312*	\\
\hline
 \multirow{3}{*}{$\begin{pmatrix}0.05\\1500\end{pmatrix}$} &	fval	&	4.3946	&	4.6120	&	4.5067	&	\textbf{4.4631}&4.4647		&	4.4742	&	4.4891	&	4.8634	&	4.5818	\\
&	time	&	891.6	&	4.34	&	70.75	&	\textbf{12.66}&1928		&	1925	&	2002	&	44.63	&	261.5	\\
&	prob	&	0.9500	&	0.9980	&	0.9504	&	{0.9500}&0.9500		&	{0.9500}	&	{0.9500}	&	0.9787	&	0.9508	\\
\hline
 \multirow{3}{*}{$\begin{pmatrix}0.05\\2000\end{pmatrix}$}&	fval	&	4.4167	&	4.6538	&	4.5199	&	\textbf{4.4898}&4.4946		&	4.5063	&	4.5391	&	4.8597	&	4.5488	\\
&	time	&	1535	&	5.959	&	95.60	&	\textbf{14.99}&2298		&	2310	&	2447	&	46.52	&	336.7	\\
&	prob	&	0.9500	&	0.9848	&	0.9504	&	{0.9500}&0.9500		&	{0.9500}	&	{0.9500}	&	0.9843	&	0.9515	\\

\hline\hline
 \multirow{3}{*}{$\begin{pmatrix}0.1\\500\end{pmatrix}$}&	fval	&	4.1874	&	4.3833	&	4.3262	&	4.2591&4.2556		&	\textbf{4.2548}	&	\textbf{4.2548}	&	4.7110	&	4.3092	\\
&	time	&	171.6	&	1.626	&	24.75	&	\textbf{4.521}&570.2		&	528.5	&	591.7	&	42.70	&	65.16	\\
&	prob	&	0.9000	&	0.9916	&	{0.9000}	&	{0.9000}&0.9000		&	{0.9000}	&	{0.9000}	&	0.9812	&	0.9008	\\
\hline
 \multirow{3}{*}{$\begin{pmatrix}0.1\\1000\end{pmatrix}$} &	fval	&	4.2790	&	4.5306	&	4.3869	&	4.3617&4.3592		&	\textbf{4.3590}	&	4.3633	&	4.8027	&	4.4135	\\
&	time	&	674.5	&	2.928	&	47.76	&	\textbf{9.151}&1942		&	1944	&	1921	&	44.59	&	164.868	\\
&	prob	&	0.9000	&	0.9684	&	0.9002	&	{0.9000}&0.9000		&	{0.9000}	&	{0.9000}	&	0.9682	&	0.9028	\\
\hline
 \multirow{3}{*}{$\begin{pmatrix}0.1\\1500\end{pmatrix}$} &	fval	&	4.3031	&	4.5473	&	4.3975	&	\textbf{4.3694}&4.3696		&	4.3753	&	4.3937	&	4.7085	&	4.4092	\\
&	time	&	1673	&	5.073	&	74.30	&	\textbf{11.84}&1859		&	1899	&	1954	&	46.92	&	326.652	\\
&	prob	&	0.9000	&	0.9633	&	{0.9000}	&	{0.9000}&0.9000		&	{0.9000}	&	{0.9000}	&	0.9628	&	0.9041	\\
\hline
 \multirow{3}{*}{$\begin{pmatrix}0.1\\2000\end{pmatrix}$}&	fval	&	4.3212	&	4.5638	&	4.3998	&	\textbf{4.3805}&4.3866		&	4.4010	&	4.4280	&	4.7992	&	4.4406	\\
&	time	&	1801	&	5.982	&	102.8	&	\textbf{14.08}&2107		&	2217	&	2190	&	51.36	&	507.0	\\
&	prob	&	0.9000	&	0.9636	&	0.9001	&	{0.9000}&0.9000		&	{0.9000}	&	{0.9000}	&	0.9630	&	0.9110	\\
\hline
\end{tabular}
\end{center}
\vspace{0.1cm}
{\footnotesize The magnitude of fval is $10^{7}$.}
\vskip -0.2in
\end{table}

\subsection{Probabilistic Transportation Problem with Convex Objective}\label{subsec:exp ptp}

In this subsection, we consider a probabilistic version of the classical transportation problem, which has been widely studied in the literature; see, e.g., \cite{bai2021augmented,luedtke2010integer}. This problem is to minimize the transportation cost of delivering products from $n$ suppliers to $m$ customers. The customer demands are random and the $j$-th customer's demand is represented by a random variable $\xi_j$ for each $j \in \{1,\dots,m\}$. The $i$-th supplier has a limited production capacity $\theta_i \in \R_+$ for each $i \in \{1,\dots,n\}$. The cost of shipping a unit of product from supplier $i \in \{1,\dots,n\}$ to customer $j \in \{1,\dots,m\}$ is $c_{ij} \in \R_+$. Suppose that the shipment quantities are required to be determined before the customer demands are known. By letting $x_{ij}$ denote the amount of shipment delivered from supplier $i \in \{1,\dots,n\}$ to customer $j \in \{1,\dots,m\}$, this problem is formulated as
\begin{equation}\label{PTP}
\begin{aligned}
\min_{\bx \in \R^{n\times m}} \sum_{i=1}^n \sum_{j=1}^m c_{i j} x_{i j}\qquad \st &\quad \P\left( \sum_{i=1}^n x_{ij} \ge \xi_j,\ j = 1,\dots,m \right) \ge 1 - \alpha,  \\
&\quad \sum_{j=1}^m x_{ij} \le \theta_i,\ x_{ij} \ge 0,\ i =1,\dots,n,\ j = 1,\dots,m.
\end{aligned}
\end{equation}

In our experiments, we use the setting in \cite{luedtke2010integer} to generate parameters $(\bm{\theta},\bm{c},\hat{\bxi})$, which is downloaded from \url{http://homepages.cae.wisc.edu/~luedtkej/}. In particular, we choose $(n,m)=(40,100)$ and $N=500, 1000, 1500, 2000$. 
We report the experimental results in \Cref{table-2}. We observe that DCA and pDCA in general can find significantly better solutions than CVaR and ALDM, and  slightly better solutions than Bi-CVaR and SCA in terms of objective values. 
Meanwhile, we see that MIP returns either global optimal solutions or best objective values among all the algorithms in the time limit. 
We also observe that the CPU time of the DCA is less than Bi-CVaR and ALDM, much less than that of MIP and pDCA, and is slightly larger than that of CVaR.
We should mention that pDCA is the most time-consuming among the tested methods, since it solves a quadratic programming subproblem in each iteration, while other methods solve a linear programming subproblem. Table \ref{table-2} also indicates that the in-sample probabilities of DCA and pDCA are exactly the risk level $1-\alpha$ in all instances, while the in-sample probabilities of ALDM and SCA may be either too loose or too conservative.

\subsection{Probabilistic Transportation Problem with Non-Convex Objective}

In this subsection, we consider a probabilistic version of the classical transportation problem with a non-convex objective function, which has been studied in \cite{bai2021augmented,dentcheva2013regularization}.
This problem is to minimize the transportation cost of delivering products from $n$ suppliers to $m$ customers. The customer demands are random and the $j$-th customer's demand is represented by a random variable $\xi_j$ for each $j \in \{1,\dots,m\}$. The $i$-th supplier has a limited production capacity $\theta_i \in \R_+$ for each $i \in \{1,\dots,n\}$. The cost of shipping a unit of product from supplier $i \in \{1,\dots,n\}$ to customer $j \in \{1,\dots,m\}$ is $c_{ij} \in \R_+$. Suppose that the shipment quantities are required to be determined before the customer demands are known. Let $x_{ij}$ denote the amount of shipment delivered from supplier $i \in \{1,\dots,n\}$ to customer $j \in \{1,\dots,m\}$. 
Here, we assume that the transportation cost from supplier $i$ to customer $j$ consists of the normal cost $c_{ij}x_{ij}$ and cost discount $a_{i j} x_{i j}^2 \left(a_{i j}<0\right)$. Consequently, this problem can be formulated as
\begin{align}\label{eq:nonconvexPTP}
\begin{aligned}
\min_{\bx \in \R^{n\times m}} \quad \sum_{i=1}^n \sum_{j=1}^m  c_{i j} x_{i j}+a_{i j} x_{i j}^2\qquad \st & \quad \mathbb{P}\left(\sum_{i=1}^n x_{i j} \geq \xi_j, j=1, \ldots, m\right) \geq 1-\alpha, \\
&\quad \sum_{j=1}^m x_{i j} \leq \theta_i,\ x_{i j} \geq 0,\ i=1, \ldots, n,\ j=1, \ldots, m.
\end{aligned}
\end{align}
 In our experiments, we set $a_{i j}=-c_{i j} /\left(2 \theta_i\right)$ for all $i,j$, and the remaining setting is the same as that in the last section. Moreover, we use the setting in \cite{luedtke2010integer} to generate parameters $(\bm{\theta},\bm{c},\hat{\bxi})$, which is downloaded from \url{http://homepages.cae.wisc.edu/~luedtkej/}. In particular, we choose $(n,m)=(40,100)$ and $N=500, 1000, 1500, 2000$. 

 \vspace{-0.1in}
\begin{table}[!htbp]
 \caption{Comparison on the probabilistic transportation problem  (averaged over 5 instances)}  \label{table-3} 
 \begin{center}\small 
   \begin{tabular*}{\textwidth}{@{\extracolsep{\fill}}c|| l| cccccc  cccccr}
 \hline
 ($\alpha$,$N$) & & \textbf{ MIP} & \textbf{ DCA} & \textbf{ pDCA-1} & \textbf{ pDCA-2} & \textbf{ pDCA-3}  & \textbf{ ALDM}  & \textbf{ SCA}\\
  \hline
\multirow{3}{*}{$\begin{pmatrix}0.05\\500\end{pmatrix}$}&	fval	&	3.5098	&	3.6012	&3.5970	&	3.5973	&	\textbf{3.5962}	&	4.0023	&	3.4808	\\
&	time	&	1805	&	\textbf{7.448}	&281.3	&	340.7	&	458.8	&	267.6	&	8.42	\\
&	prob	&	0.9500	&	{0.9500}	&0.9500	&	{0.9500}	&	{0.9500}	&	0.9504	&	0.8180*	\\
 \hline
\multirow{3}{*}{$\begin{pmatrix}0.05\\1000\end{pmatrix}$}&	fval	&	3.5868	&	3.6830	&3.6871	&	\textbf{3.6822}	&	3.7027	&	4.1015	&	3.6819	\\
&	time	&	1803	&	\textbf{15.76}	&2006	&	1989	&	1851	&	178.6	&	87.53	\\
&	prob	&	0.9500	&	{0.9500}	&0.9500	&	{0.9500}	&	{0.9500}	&	0.9714	&	0.9318*	\\
\hline
\multirow{3}{*}{$\begin{pmatrix}0.05\\1500\end{pmatrix}$}&	fval	&	3.6123	&	\textbf{3.6888}	&3.7088	&	3.7170	&	3.7455	&	3.9974	&	3.7691	\\
&	time	&	1803	&	\textbf{23.12}	&2142	&	1927	&	1986	&	186.5	&	309.4	\\
&	prob	&	0.9500	&	{0.9500}	&0.9500	&	{0.9500}	&	{0.9500}	&	0.9845	&	0.9504	\\
\hline
\multirow{3}{*}{$\begin{pmatrix}0.05\\2000\end{pmatrix}$}&	fval	&	3.6237	&	\textbf{3.7133}	&3.7307	&	3.7575	&	3.7882	&	4.0842	&	3.7481	\\
&	time	&	1803	&	\textbf{33.04}	&2243	&	2381	&	2381	&	147.4	&	412.9	\\
&	prob	&	0.9500	&	{0.9500}	&0.9502	&	{0.9500}	&	{0.9500}	&	0.9845	&	0.9505	\\
\hline\hline
\multirow{3}{*}{$\begin{pmatrix}0.1\\500\end{pmatrix}$} &	fval	&	3.4581	&	3.5473	&3.5438	&	3.5436	&	\textbf{3.5421}	&	4.0195	&	3.5784	\\
&	time	&	1804	&	\textbf{8.845}	&335.0	&	413.1	&	405.3	&	175.4	&	67.68	\\
&	prob	&	0.9000	&	{0.9000}	&0.9000	&	{0.9000}	&	{0.9000}	&	0.9904	&	0.9016	\\
\hline
\multirow{3}{*}{$\begin{pmatrix}0.1\\1000\end{pmatrix}$} &	fval	&	3.5238	&	\textbf{3.6224}	&3.6272	&	3.6229	&	3.6406	&	3.9981	&	3.6503	\\
&	time	&	1802	&	\textbf{16.20}	&2065	&	1888	&	1949	&	151.1	&	201.2	\\
&	prob	&	0.9000	&	{0.9000}	&0.9000	&	{0.9000}	&	{0.9000}	&	0.9684	&	0.9010	\\
\hline
\multirow{3}{*}{$\begin{pmatrix}0.1\\1500\end{pmatrix}$} &	fval	&	3.5427	&	\textbf{3.6231}	&3.6422	&	3.6482	&	3.6779	&	4.0223	&	3.6499	\\
&	time	&	1802	&	\textbf{25.45}	&2043	&	1896	&	1976	&	177.2	&	401.5	\\
&	prob	&	0.9000	&	{0.9000}	&0.9004	&	{0.9000}	&	{0.9000}	&	0.9629	&	0.9007	\\
 \hline
\multirow{3}{*}{$\begin{pmatrix}0.1\\2000\end{pmatrix}$}&	fval	&	3.5521	&	\textbf{3.6281}	&3.6487	&	3.6775	&	3.7071	&	4.0006	&	3.6647	\\
&	time	&	1802	&	\textbf{27.14}	&2129	&	2242	&	2248	&	156.4	&	612.6	\\
&	prob	&	0.9000	&	{0.9000}	&0.9004	&	{0.9000}	&	0.9032	&	0.9631	&	0.9114	\\
\hline
 \end{tabular*}
 \end{center}
 \vspace{0.1cm}
{\footnotesize The magnitude of fval is $10^{7}$.}
\vskip -0.1in
 \end{table}   

Since the objective function of this problem is non-convex, CVaR and Bi-CVaR cannot handle this problem. Then, we only compare our proposed method with MIP, ALDM, and SCA. To generate a feasible initial point, we apply CVaR to solve Problem \eqref{eq:nonconvexPTP} without cost discount in the objective function. 
We report the experimental results in Table \ref{table-3}. We further point out that although MIP achieves the lowest objective value, it reaches the time limit for all the instances, which indicates the hardness of the additional non-convex term in the objective.  In terms of objective values and running time, we observe that DCA generally outperforms pDCA, ALDM, and SCA in most of cases. We should mention that pDCA is the most time-consuming among the tested methods except MIP, since it solves a quadratic programming subproblem in each iteration, while other methods solve a linear programming subproblem.
We observe that the in-sample probabilities of DCA and pDCA are generally closer to the risk level $1-\alpha$ than ALDM and SCA in all instances.

\vspace{-0.1in}
\begin{table}[t]
\small 
 \caption{Comparison on the norm optimization problem  (averaged over 5 instances)}  \label{table-4} 
 \begin{center}
   \begin{tabular*}{\textwidth}{@{\extracolsep{\fill}}c|| l| cccccccc}
 \hline
 ($\alpha$,$N$) & & {\bf MIP} & {\bf CVaR} & {\bf BiCVaR} & {\bf DCA} & {\bf pDCA1} & {\bf pDCA2} & {\bf pDCA3} & {\bf SCA}\\
  \hline
\multirow{3}{*}{$\begin{pmatrix}0.05\\500\end{pmatrix}$}& fval & -28.2120 & -26.8209 & -27.7280 & \textbf{-28.0164} & -27.9586 & -27.9656 & -27.9547 & -27.9810\\
& time & 1801 & 10.86 & 52.93 & 455.9 & 87.76 & 86.81 & 96.65 & \textbf{50.93}\\
& prob & 0.9500 & 0.9820 & 0.9516 & 0.9500 & 0.9516 & 0.9520 & 0.9508 & 0.9524\\
 \hline
\multirow{3}{*}{$\begin{pmatrix}0.05\\1000\end{pmatrix}$}& fval & -27.9205 & -26.5985 & -27.5185 & \textbf{-27.7318} & -27.6637 & -27.6774 & -27.6911 & -27.6879\\
& time & 1802 & 23.15 & \textbf{118.9} & 726.2 & 269.5 & 299.7 & 361.3 & 133.6\\
& prob & 0.9500 & 0.9808 & 0.9510 & 0.9506 & 0.9512 & 0.9504 & 0.9506 & 0.9532\\
\hline
\multirow{3}{*}{$\begin{pmatrix}0.05\\1500\end{pmatrix}$}& fval & -27.9565 & -26.6441 & -27.6711 & \textbf{-27.8045} & -27.7334 & -27.7561 & -27.7250 & -27.7481\\
& time & 1802 & 40.04 & 373.3 & 976.2 & 153.1 & 194.7 & \textbf{138.6} & 324.3\\
& prob & 0.9500 & 0.9808 & 0.9512 & 0.9504 & 0.9511 & 0.9504 & 0.9512 & 0.9543\\
\hline
\multirow{3}{*}{$\begin{pmatrix}0.05\\2000\end{pmatrix}$}& fval & -27.6895 & -26.4917 & -27.4546 & \textbf{-27.5653} & -27.5302 & -27.5224 & -27.5238 & -27.5572\\
& time & 1802 & 57.60 & 541.3 & 1342 & \textbf{266.0} & 272.7 & 292.1 & 280.1\\
& prob & 0.9503 & 0.9815 & 0.9511 & 0.9504 & 0.9504 & 0.9507 & 0.9507 & 0.9525\\
\hline\hline
\multirow{3}{*}{$\begin{pmatrix}0.1\\500\end{pmatrix}$} & fval & -28.9788 & -27.2620 & -28.4710 & \textbf{-28.6983} & -28.6256 & -28.6513 & -28.6140 & -28.6793\\
& time & 1802 & 9.369 & \textbf{45.10} & 594.8 & 88.44 & 103.3 & 86.35 & 59.73\\
& prob & 0.9000 & 0.9656 & 0.9024 & 0.9008 & 0.9008 & 0.9004 & 0.9004 & 0.9048\\
\hline
\multirow{3}{*}{$\begin{pmatrix}0.1\\1000\end{pmatrix}$} & fval & -28.8311 & -27.2811 & -28.5296 & -28.6618 & -28.6169 & -28.6390 & -28.6219 & \textbf{-28.6882}\\
& time & 1802 & 16.52 & \textbf{104.7} & 689.4 & 198.8 & 323.1 & 220.1 & 202.3\\
& prob & 0.9000 & 0.9602 & 0.9010 & 0.9006 & 0.9008 & 0.9002 & 0.9010 & 0.9034\\
\hline
\multirow{3}{*}{$\begin{pmatrix}0.1\\1500\end{pmatrix}$} & fval & -28.7416 & -27.2847 & -28.5288 & \textbf{-28.6682} & -28.6153 & -28.6104 & -28.6156 & -28.6276\\
& time & 1804 & 45.97 & 359.2 & 1353 & 211.2 & 170.6 & \textbf{169.2} & 285.5\\
& prob & 0.9000 & 0.9619 & 0.9015 & 0.9005 & 0.9012 & 0.9008 & 0.9008 & 0.9044\\
 \hline
\multirow{3}{*}{$\begin{pmatrix}0.1\\2000\end{pmatrix}$} & fval & -28.6728 & -27.2716 & -28.5037 & -28.5889 & -28.5594 & -28.5661 & \textbf{-28.6123} & -28.5627\\
& time & 1805 & 67.45 & 510.4 & 1341 & \textbf{288.7} & 323.8 & 496.8 & 298.3\\
& prob & 0.9010 & 0.9648 & 0.9044 & 0.9022 & 0.9037 & 0.9032 & 0.9002 & 0.9065\\
\hline
 \end{tabular*}
 \end{center}
\vskip -0.2in
 \end{table}

\subsection{Linear Optimization with Nonlinear Chance Constraint}

In this subsection, we consider an optimization problem with a linear objective and a joint convex nonlinear chance constraint, which has been studied in \cite{hong2011sequential,kannan2021stochastic}. Specifically, this problem takes the form
\begin{equation}
\min_{\bm x \in \R_+^d} \quad -\sum_{i=1}^{d}x_{i}\qquad \st \quad
\mathbb{P}\left(\sum_{i=1}^{d}\xi_{ij}^{2}x_{i}^{2}\leq \theta,\ j = 1,\dots,m \right) \geq 1-\alpha,
\end{equation}
where $\xi_{ij}$ for all $i,j$ are dependent normal random variables with mean $j/d$ and variance $1$, and $\mathrm{cov}(\xi_{ij},\xi_{i^\prime j}) = 0.5$ if $i \neq i^\prime$, $\mathrm{cov}(\xi_{ij},\xi_{i^\prime j^\prime}) = 0$ if $j \neq j^\prime$. In our experiments, we set $d = 20$,  $m = 20$, and $\theta = 100$. Moreover, we consider four different numbers of training samples, i.e., $N=500,1000,1500,2000$.

From Table 4, we observe that MIP has the lowest objective value in all cases, yet it is the most time-consuming. In all cases, pDCA or DCA achieves the lowest objective value except the case where $\alpha = 0.1, N = 1000$.  Moreover, it is worth mentioning that DCA and pDCA achieve in-sample probabilities close to the prespecified level, while the in-sample probabilities of CVaR and SCA tend to be more conservative.

\section{Conclusions}\label{sec:conc}

In this paper, we proposed a new DC reformulation based on the empirical quantile for solving data-driven chance constrained programs and proposed a proximal DC algorithm to solve it. We proved the subsequential and sequential convergence to a KKT point of the proposed method and derived the iteration complexity for computing an approximate KKT point.
We point out that our analysis holds for general DC constrained DC programs beyond those reformulated from chance constrained programs and can be extended to DC programs with multiple DC constraints.  
We also show possible extensions of our methods to nonparametric estimators for quantile in chance constrained programs. Finally, we demonstrated the efficiency and efficacy of the proposed method via numerical experiments. As future work, one interesting direction is to extend our analysis framework to the conic chance constraints \citep{van2023inner}.  

\section*{Acknowledgements}

We thank Dr. Lai Tian (The Chinese University of Hong Kong) for the fruitful discussion of the nonsmooth analysis of this work. We also thank Professor Ying Cui (University of California, Berkeley) for pointing out a technique error in Lemma 4 and bringing some references to our attention. 

\bibliographystyle{abbrvnat}
\bibliography{chance_constraint}

\begin{center}
{\Huge \bf Appendix}
\end{center} 
\smallskip 
\setcounter{section}{0}
\setcounter{lemma}{0}
\setcounter{defi}{0}
\setcounter{page}{1}
\pagenumbering{roman} 
\renewcommand\thesection{\Alph{section}}
\renewcommand{\thelemma}{\Alph{section}.\arabic{lemma}}
\renewcommand{\thedefi}{\Alph{section}.\arabic{defi}} 

\section{Auxiliary Definitions and Results on Convex Analysis}

We first present a lemma that provides a quadratic lower bound for strongly convex functions; see, e.g., \cite[Theorem 5.24]{beck2017first}.
\begin{lemma}\label{lem:sc_lb}
    If $f$ is a $\mu$-strongly convex function, then we have
    \[
    f(\by)\ge f(\bx) + \langle \bs, \by-\bx \rangle + \frac{\mu}{2}\|\by-\bx\|^2,\ \forall \bm s \in \partial f(\bm x).
    \]
\end{lemma}

We present some rules for calculating the subdifferential of the pointwise maximum of convex functions and the subdifferential of the sum of convex functions, as provided in \cite[Corollary E.4.3.2]{hiriart2004fundamentals} and \cite[Theorem 23.8]{rockafellar1970convex}, respectively. 

\begin{lemma}\label{lem:subdiff rule}
    Suppose that $f_1(\bm x),\dots,f_m(\bm x):\R^n \to \R$ are proper convex functions. \\
    (i) Let $f := \max\left\{f_1,\dots,f_m \right\}$.
    It holds that 
    \begin{align*}
        \partial f(\bm x) = \mathrm{conv}\left\{\cup \partial f_i(\bm x): i \in I(\bm x)\right\},
    \end{align*}
    where $I(\bm x) = \{i\in \{1,\ldots,n\}: f_i(\bm x) = f(\bm x)\}$ denotes the active index set at $\bm x$. \\
    (ii) Let $f = f_1+\dots+f_m$. If the convex sets $\mathrm{ri}(\mathrm{dom}(f_i))$ for all $i \in \{1,\ldots,m\}$ have a point in common, then 
    \begin{align*}
        \partial f(\bm x) = \partial f_1(\bm x) + \dots + \partial f_m(\bm x),\ \forall \bm x,
    \end{align*}
    where $\mathrm{ri}(C)$ denotes the relative interior of a convex set $C$. 
\end{lemma}



Then, we present a lemma that characterizes the normal cone of the convex set $\mathcal{X}$ defined in \eqref{set:X}. 

\begin{lemma}\label{lem:normal cone}
    Suppose that \Cref{AS:MFCQ} holds. Let $\mathcal{X} \subseteq \R^n$ be a convex set defined in \eqref{set:X}. It holds that 
    \begin{align*}
        \mN_{\mX}(\bx) = \left\{ \sum_{i\in \mE}u_i\ba_i + \sum_{i\in \mI}v_i\nabla \omega_i(\bx): \ v_i \ge 0,\ \text{for}\ i \in \mA(\bx),\ v_i = 0,\ \text{for}\ i \in \mathcal{I} \setminus \mathcal{A}(\bm x) \right\}.
    \end{align*}
\end{lemma}
\begin{proof}
 For ease of exposition, let 
\begin{align*}
    \mathcal{S} :=  \left\{ \sum_{i\in \mE}u_i\ba_i + \sum_{i\in \mI}v_i\nabla \omega_i(\bx): \ v_i \ge 0,\ \text{for}\ i \in \mA(\bx),\ v_i = 0,\ \text{for}\ i \in \mathcal{I} \setminus \mathcal{A}(\bm x) \right\}. 
\end{align*}
According to the definition of the normal cone for a convex set, we have for each $\bm x \in \mathcal{X} \subseteq \R^n$, 
\begin{align*}
    \mathcal{N}_{\cal X}(\bm x) = \left\{ \bm d \in \R^n: \langle \bm d, \bm y - \bm x \rangle \le 0,\ \forall \bm y \in \mathcal{X} \right\}. 
\end{align*}
For each $\bm d \in \mathcal{S}$, we compute for all $\bm y \in \mathcal{X}$, 
\begin{align*}
    \left\langle \sum_{i\in \mE}u_i\ba_i + \sum_{i\in \mI}v_i\nabla \omega_i(\bx), \bm y - \bm x \right\rangle = \sum_{i \in \mathcal{A}(\bm x)} v_i  \left\langle  \nabla \omega_i(\bx), \bm y - \bm x \right\rangle \le 0, 
\end{align*}
where the first equality follows from $\langle\bm a_i, \bm x \rangle = \langle \bm a_i ,\bm y \rangle = -b_i$ for all $i \in \mathcal{E}$ and $\bm v_i = 0$ for all $i \in \mathcal{I}\setminus \mathcal{A}(\bm x)$, and the inequality follows from $v_i \ge 0$ and $\left\langle  \nabla \omega_i(\bx), \bm y - \bm x \right\rangle  = \omega_i(\bm x) + \left\langle  \nabla \omega_i(\bx), \bm y - \bm x \right\rangle \le \omega(\bm y) \le 0 $ using $\omega_i(\bm x) = 0$ for all $i \in \mathcal{A}(\bm x)$ and the convexity of $\omega_i$. This implies $\bm d \in \mathcal{N}_{\cal X}(\bm x)$, and thus $\mathcal{S} \subseteq \mathcal{N}_{\cal X}(\bm x)$.

For ease of exposition, we write 
\begin{align*}
    \mathcal{X} = \left\{ \bm x \in \R^n: h_i(\bm x) = 0,\ \forall i \in \mathcal{E},\ \omega_i(\bm x) = 0,\ \forall i \in \mathcal{A}(\bm x),\ \omega_i(\bm x)  < 0,\ \forall i \in \mathcal{I} \setminus  \mathcal{A}(\bm x) \right\},
\end{align*}
where $h_i(\bm x) = \bm a_i^T\bm x + b_i$ for all $i \in \mathcal{E}$. According to \cite[Theorem 6.14]{RW04}, it holds that $\mathcal{N}_{\cal X}(\bm x)  \subseteq  \mathcal{S}$ at any $\bm x \in \mathcal{X}$ satisfying the following constraint qualification: the only vector $(\bm u, \bm v)$ satisfying $u_i \in \mathcal{N}_{\{0\}}(h_i(\bm x))$ for all $i \in \mathcal{E}$, $v_i \in \mathcal{N}_{\{0\}}(\omega_i(\bm x))$ for all $i \in \mathcal{A}(\bm x)$, and $v_i = 0$ (due to $v_i \in \mathcal{N}_{\{x: x < 0\}}(\omega_i(\bm x))$) for all $i \in \mathcal{I} \setminus  \mathcal{A}(\bm x)$ such that 
\begin{align}\label{eq1:lem cone}
    \sum_{i\in \mathcal{E}} u_i \bm a_i + \sum_{i \in \mathcal{A}(\bm x)} v_i \nabla \omega_i(\bm x)  = 0
\end{align}
is $u_i= 0$ for all $i \in \mathcal{E}$ and $v_i = 0$ for all $i \in \mathcal{A}(\bm x)$. Therefore, it remains to show the above constraint qualification. Without loss of generality, we assume that $\{\ba_i: i\in \mE\}$ is linearly independent, since otherwise, we can obtain the same results by eliminating the redundant linear equalities. According to \Cref{AS:MFCQ}, there exists $\bm y \in \mathcal{X}$ such that $\langle \nabla \omega_i(\bm x), \bm y - \bm x\rangle < 0$ for all $i \in \mathcal{A}(\bm x)$. Taking inner products with $\bm y - \bm x $ on both sides of \eqref{eq1:lem cone} yields 
\begin{align*}
    0 = \sum_{i \in \mathcal{A}(\bm x)} v_i \langle \nabla \omega_i(\bm x), \bm y - \bm x \rangle,
\end{align*}
where the equality follows from $\langle \bm a_i, \bm y - \bm x \rangle = 0$ for all $i \in \mathcal{E}$. Therefore, we have $v_i = 0$ for all $i \in \mathcal{A}(\bm x)$. Using this and linearly dependence of $\{\bm a_i\}_{i \in \mathcal{E}}$ yields $u_i=0$ for all $i \in \mathcal{E}$. Then, we proved the constraint qualification.  
\end{proof}

\section{Auxiliary Definitions and Results on Variational Analysis}

In this section, we introduce some definitions and lemmas from variational analysis that are used in our proofs. Specifically, we define the extended real domain $\overline{\R} = \R \cup \{+\infty\}$. First, we present the definition of the outer semi-continuity, as defined in \citet[Definition 5.4]{RW04}.

\begin{defi}[Outer Semi-Continuity]\label{def:osc}
    A set-value mapping $S:\R^n \rightrightarrows \R^m$ is outer semi-continuous at $\bar{\bm x}$ if 
    \begin{align*}
        \left\{ \bm u \in \R^n: \exists \bm x^k \to \bar{\bm x},\ \exists \bm u^k \to \bm u\ \text{with}\ \bm u^k \in S(\bm x^k)  \right\} \subseteq S(\bar{\bm x}). 
    \end{align*}
\end{defi}

In particular, the limiting subdifferential of proper function is outer semi-continuous, as shown in \citet[Proposition 8.7]{RW04}.

\begin{lemma}\label{lem:osc}
    For a function $f:\R^n \to \overline{\R}$ and a point $\bar{\bm x}$ where $f$ is finite, the mapping $\partial f$ is outer semi-continuous at  $\bar{\bm x}$. 
\end{lemma}

Next, we present the definition of strict continuity \cite[Definition 9.1]{RW04} and a sufficient condition to guarantee strict continuity. 

\begin{defi}[Strict Continuity]\label{def:cts}
    Let $f:\mathcal{D} \to \R$ be a function defined on a set $\mathcal{D} \subseteq \R^n$ and $\mathcal{S} \subseteq \mathcal{D}$. We say that $f$ is strictly continuous at $\bar{\bm x}$ relative to $\mathcal{S}$ if $\bar{\bm x} \in \mathcal{S}$ and the value 
    \begin{align*}
        \limsup_{\bm x, \bm x^\prime \underset{\mathcal{S}}{\to} \bar{\bm x}, \bm x \neq \bm x^\prime} \frac{|F(\bm x^\prime) - F(\bm x)|}{\|\bm x^\prime - \bm x\|}
    \end{align*}
    is finite. Then, $f$ is strictly continuous relative to $\mathcal{S}$ if for every point $\bar{\bm x} \in \mathcal{S}$, $f$ is strictly continuous at $\bar{\bm x}$ relative to $\mathcal{S}$. 
\end{defi}

\begin{lemma}\label{lem:cts}
   Consider the setting in \Cref{def:cts}. If a function $f$ is locally Lipschitz continuous on $\mathcal{S}$, then it is strictly continuous relative to $\mathcal{S}$. 
\end{lemma}





\cite[Corollary 10.50]{RW04} shows that one can characterize the Fr\'echet normal cone of a set via the extended chain rule. 

\begin{lemma}\label{lem:chain}
    Let $\mathcal{X} = F^{-1}(\mathcal{D})$ for a closed set $\mathcal{D} \subseteq \R^m $ and $F:\R^n \to \R^m$ be a strictly continuous mapping. At any $\bm x \in \mathcal{X}$, one has
    \begin{align*}
        \widehat{\mathcal{N}}_{\cal X}(\bm x) \supseteq \left\{\widehat{\partial} (yF)(\bm x): y \in \widehat{\mathcal{N}}_{\cal D}(F(\bm x)) \right\}. 
    \end{align*}
\end{lemma}

Finally, we present a lemma that provides some rules for calculating the subdifferential of functions. These rules directly follow from \cite[Theorem 8.6, Exercise 8.8(c), Proposition 10.5, Corollary 10.9]{RW04}. Notably, for a function $f:\R^n \to \overline{\R}$ and a point $\bm x$ with $f(\bm x)$ is finite, the {\em subderivative} $d f(\bm x): \R^n \to \overline{\R}$ is defined by 
\begin{align*}
    d f(\bm x)(\bm w) = \liminf_{\tau \searrow 0, \bm u \to \bm w } \frac{f(\bm x + \tau \bm u) - f(\bm x)}{\tau}. 
\end{align*}

\begin{lemma}\label{lem:rule sub}
    (i) For a function $f:\R^n \to \overline{\R}$ and a point $\bm x$ where $f$ is finite, then the subgradient sets $\partial f(\bm x)$ and $\widehat{\partial} f(\bm x)$ are closed with $\widehat{\partial} f(\bm x)$ being convex and $\widehat{\partial} f(\bm x) \subseteq \partial f(\bm x)$. 

    (ii) If $f = g + h$ with $g$ finite at $\bm x$ and $h$ smooth on a neighborhood of $\bm x$, then 
    \begin{align*}
        \widehat{\partial} f(\bar{\bm x}) = \widehat{\partial} g(\bar{\bm x}) + \nabla h(\bar{\bm x}). 
    \end{align*}

    (iii) Let $f(\bm x) = f_1(\bm x_1) + \dots + f_m(\bm x_m)$ for lower semicontinuous functions $f_i:\R^{n_i} \to \overline{\R}$, where $\bm x = (\bm x_1,\dots,\bm x_m)$ with $\bm x_i \in \R^{n_i}$. Then, at any $\bm x =  (\bm x_1,\dots,\bm x_m)$ with $f(\bm x)$ is finite and $d f_i(\bm x_i)(\bm 0) = \bm 0$, one has
    \begin{align*}
        \widehat{\partial} f(\bm x) = \widehat{\partial} f_1(\bm x_1) \times \cdots \times \widehat{\partial} f_m(\bm x_m).
    \end{align*}

    (iv) Let $f=f_1+\dots+f_m$ for proper and lower semicontinuous functions $f_i:\R^{n_i} \to \overline{\R}$ and $\bm x \in \mathrm{dom}(f)$. Then, we have 
    \begin{align*}
        \widehat{\partial} f(\bm x) \supseteq \widehat{\partial} f_1(\bm x) + \cdots + \widehat{\partial} f_m(\bm x). 
    \end{align*}
\end{lemma}

\section{Proof on Generalized MFCQ and its Equivalent Condition}\label{app:MFCQ}

Indeed, suppose that \Cref{AS:MFCQ} holds. Let $\bd=\by-\bx$. We immediately have \eqref{eq:mfcqeq1}. 
Using $G^\prime(\bm x, \bm d) = \inf_{t\ge 0} {(G(\bm x+t\bm d) - G(\bm x))}/{t}\le G(\bm x + d) - G(\bm x)$ and \eqref{slater:2}, we directly obtain \eqref{eq:mfcqeq2} when $t=1$. Conversely, suppose that \eqref{eq:mfcqeq1} and \eqref{eq:mfcqeq2} hold. Let $\bz=\bx+\alpha\bd$. For sufficiently small $\alpha>0$, we have
\[
\begin{aligned}
G(\bz) - H(\bm x) - \inf_{\bs_H\in\partial H(\bx)}\langle \bm s_H, \bz- \bm x \rangle & =  G(\bm x)  + \alpha G'(\bx, \bd) + o(\alpha)  - H(\bm x) - \alpha \inf_{\bs_H\in\partial H(\bx)}\langle \bm s_H,  \bm d \rangle \\
& = \alpha \left(G'(\bx,\bd) -\inf_{\bs_H\in\partial H(\bx)} \langle \bm s_H, \bd\rangle + o(1)\right) < 0. 
\end{aligned}
\]
where the first equality is due to the definition of the directional derivative and the second equality is due to $G(\bm x) = H(\bm x)$. Hence $\bz$ satisfies $\bz\in\mX$, \eqref{slater:1} and \eqref{slater:2}.

\section{Semialgebraic Functions and K\L\ Property}\label{app:D}

According to \cite[Section 5]{bolte2014proximal}, we provide some important definitions and results on the K\L\ property, as well as several concrete examples. 

\begin{defi}[Semialgebraic Sets and Functions]\label{def:semi}
    We say that a subset of $\R^n$ is semialgebraic if it can be written as a finite union of sets of the form 
    \begin{align*}
        \left\{\bm x \in \R^n: p_i(\bm x) = 0,\ q_i(\bm x) < 0, \forall i \right\},
    \end{align*}
    where $p_i$ and $q_i$ are real polynomial functions. Moreover, a function $f: \R^n \to \overline{\R}$ is semialgebraic if its graph is semialgebraic on $\R^{n+1}$.  
\end{defi}


There are a variety of sets and functions arising in optimization that are semi-algebraic.

\begin{exam}\label{exam:semi}
    The following sets and functions are semialgebraic:
    \begin{itemize}
        \item Real polynomial functions
        \item Indicator functions of semialgebraic sets 
        \item Finite sums and product of semialgebraic functions
        \item Sup/Inf type function, e.g., $\sup\{g(\bm x, \bm y): \bm y \in C\}$ is semialgebraic when $g$ is a semialgebraic function and $C$ is a semialgebraic set.
    \end{itemize}
\end{exam} 

\end{document}